\newfont{\bb}{msbm10 at 11pt}
\newfont{\bbsmall}{msbm8 at 8pt}
\def\rth{\mathbb{R}^3}
\def\R{\mathbb{R}}
\def\B{\mathbb{B}}
\def\N{\mathbb{N}}
\def\Z{\mathbb{Z}}
\def\Hip{\mathbb{H}}
\def\esf{\mathbb{S}}
\newcommand{\ben}{\begin{enumerate}}
\newcommand{\bit}{\begin{itemize}}
\newcommand{\een}{\end{enumerate}}
\newcommand{\eit}{\end{itemize}}
\newcommand{\wh}{\widehat}
\newcommand{\Int}{\mbox{\rm Int}}
\newcommand{\ds}{\displaystyle}
\newcommand{\wt}{\widetilde}
\newcommand{\HH}{\mathbb{H}}
\newcommand{\ed}{\end{document}}
\newcommand{\ov}{\overline}
\def\a{{\alpha}}
\def\t{{\theta}}
\def\g{{\gamma}}
\def\G{{\Gamma}}
\def\l{{\lambda}}
\def\de{{\delta}}
\def\be{{\beta}}
\def\ve{{\varepsilon}}
\def\cA{\mathcal{A}}
\def\cC{\mathcal{C}}
\def\cH{\mathcal{H}}
\def\cM{\mathcal{M}}
\def\cF{\mathcal{F}}
\def\cD{\mathcal{D}}
\let\alfa=\alpha
\let\parc=\partial
\def\ep{\varepsilon}
\def\esiz{\langle}
\def\esde{\rangle}
\def\cte.{\mathop{\rm cte.}\nolimits}
\def\div{\mathop{\rm div }\nolimits}
 \def\dim{\mathop{\rm dim }\nolimits}
\def\N{\mathbb{N}}
\def\E{\mathbb{E}}
\def\B{\mathbb{B}}
\def\R{\mathbb{R}}
\def\Z{\mathbb{Z}}
\def\H{\mathbb{H}}
\def\S{\mathbb{S}}
\def\psl{{\rm PSL}(2,\mathbb{R})}
\newtheorem{theorem}{Theorem}[section]
\newtheorem{lemma}[theorem]{Lemma}
\newtheorem{proposition}[theorem]{Proposition}
\newtheorem{remark}[theorem]{Remark}
\newtheorem{corollary}[theorem]{Corollary}
\newtheorem{definition}[theorem]{Definition}
\newtheorem{conjecture}[theorem]{Conjecture}
\newtheorem{assertion}[theorem]{Assertion}
\newcommand{\su}{{\rm SU}(2)}
\renewcommand{\sl}{\wt{\mathrm{SL}}(2,\R)}
\numberwithin{equation}{section}
\def\titlerunning#1{\gdef\titrun{#1}}
\def\author#1{\gdef\autrun{\def\and{\unskip, }#1}\gdef\@author{#1}}
\def\address#1{{\def\and{\\\hspace*{18pt}}\renewcommand{\thefootnote}{}%
\footnote {#1}}%
\markboth{\autrun}{\titrun}}
\def\email#1{e-mail: #1}
\def\subjclass#1{{\renewcommand{\thefootnote}{}%
\footnote{\emph{Mathematics Subject Classification (2010):} #1}}}
\def\keywords#1{\par\medskip
\noindent\textbf{Keywords.} #1}
\theoremstyle{definition}
\numberwithin{equation}{section}
\begin{document}


\baselineskip=17pt


\titlerunning{Isoperimetric domains in homogeneous three-manifolds}

\title{Isoperimetric domains of large volume in homogeneous three-manifolds}

\author{William H. Meeks III
\and
Pablo Mira
\and
Joaqu\'\i n P\'erez
\and
Antonio Ros}

\date{}

\maketitle

\address{Meeks: Mathematics Department,
University of Massachusetts, Amherst, MA 01003; \email{profmeeks@gmail.com}
\and
Mira: Department of
Applied Mathematics and Statistics, Universidad Polit\'ecnica de
Cartagena, E-30203 Cartagena, Murcia, Spain; \email{pablo.mira@upct.es}
\and
P\'erez: Department of Geometry and Topology,
University of Granada, 18001 Granada, Spain; \email{jperez@ugr.es}
\and
Ros: Department of Geometry and Topology,
University of Granada, 18001 Granada, Spain; \email{aros@ugr.es}}

\subjclass{Primary 53A10; Secondary 49Q05, 53C42}


\begin{abstract}
Given a non-compact, simply connected homogeneous three-manifold $X$
and a sequence $\{\Omega_n\}_n$ of isoperimetric domains in $X$ with
volumes tending to infinity, we prove that  as $n\to \infty$: \ben \item  The radii of the $\Omega_n$
tend to infinity. \item The ratios
$\frac{{\rm Area} (\parc \Omega_n)}{{\rm Vol}(\Omega_n)}$ converge to
the Cheeger constant ${\rm Ch} (X)$, which we also prove to be equal to
$2H(X)$ where $H(X)$ is the critical mean curvature of $X$. \item The values of the constant  mean
curvatures $H_n$ of the boundary surfaces
$\parc \Omega_n$ converge to $\frac{1}{2}{\rm Ch} (X)$. \een Furthermore, when
$\mathrm{Ch}(X)$ is
positive, we prove that for $n$ large, $\parc \Omega_n$ is well-approximated  in
a natural sense by the leaves of a certain
foliation of $X$, where every leaf of the foliation is a surface of constant mean curvature  $H(X)$.
\keywords{Surface with constant mean curvature, critical mean curvature,  isoperimetric domain,
 foliation, metric Lie group,   homogeneous three-manifold,
Cheeger constant}
\end{abstract}

\section{Introduction.} \label{intro}

Throughout this paper, {\bf $X$ will denote a non-compact, simply
connected homogeneous three-manifold}. Recall that the {\em isoperimetric
profile} of
 $X$ is defined as the function $I\colon
(0, \infty )\to (0,\infty) $ given by
 \begin{equation}
\label{isop} I(t) = \inf \{ \mbox{Area}(\partial \cD )\ : \
\overline{\cD }\subset X \mbox{ is a smooth compact domain with
Volume}(\cD )=t \}.
\end{equation} The
function $I(t)$ has been extensively studied; see the background
Section~\ref{secprel} for a brief summary of some properties of
$I(t)$.
For every value of $t\in (0,\infty)$, there exists at
least one smooth compact domain $\Omega\subset X$ of volume $t$ and area $I(t)$, and
this domain has boundary
 of non-negative
constant mean curvature with respect to the inward pointing unit normal vector,
due to the fact that $X$ is homogeneous.
Such a smooth compact domain $\Omega$ with smallest boundary area for its given
volume is called an {\em isoperimetric domain} of $X$.

In this paper we will study the geometry of isoperimetric domains in
$X$ of large volume.  The following definitions provide three key geometric invariants
which we will study in detail in order to describe the geometry of these
isoperimetric domains.

\begin{definition}
{\rm Let $\mathcal{A}$ be the collection of all compact, immersed
orientable surfaces in $X$, and given a surface $\Sigma \in
\mathcal{A}$, let $|H_{\Sigma}|\colon \Sigma \to [0,\8) $ stand for the
absolute mean curvature function of $\Sigma $. The {\em critical
mean curvature} of $X$ is defined as
\begin{equation}
\label{Cheeger} H(X)=\inf \{ \max _{\Sigma }|H_{\Sigma}| \ : \
\Sigma \in \mathcal{A} \} .
\end{equation}
}
\end{definition}

\begin{definition}
{\rm The {\it radius} of a compact Riemannian manifold with boundary
is the maximum distance from points in the manifold to its boundary.
}
\end{definition}

One goal of this paper is to prove Theorem~\ref{t1} below which
relates the mean curvatures of boundaries of isoperimetric domains
with large volume to the Cheeger constant of $X$, defined as follows.
\begin{definition}
{\rm The {\em Cheeger constant} of a Riemannian manifold $Y$ with infinite
volume is
\begin{equation}
\label{eq:Chconstant}
 {{\rm Ch}(Y)= \inf \left\{
\frac{\mbox{Area}(\partial \mathcal{D}
)}{\mbox{Vol}(\mathcal{D})}\, :\, \overline{\cD }\subset Y \mbox{ is
a smooth compact domain} \right \} } .
\end{equation}
}
\end{definition}
\vspace{.1cm}

By definition of the Cheeger constant,  $\mathrm{Ch}(X)=\inf\{ \frac{I(t)}{t} \mid t\in
(0,\infty)\}$ for every non-compact, simply connected homogeneous
three-manifold $X$, where $I$ is the isoperimetric profile of $X$.

\begin{theorem}
\label{t1} Let $X$ be a non-compact, simply connected homogeneous three-manifold.
\ben[(1)]
\item
Suppose that
$X$ is not isometric to  the Riemannian product
$\esf^2(\kappa)\times \R$ of a two-sphere of constant curvature
$\kappa>0$ with the real line. If $\Omega \subset X$ is an isoperimetric domain in $X$
with volume $t$, then $\partial \Omega$ is connected
and $$ \mathrm{Ch}(X)<\min\left\{ 2H , \frac{I(t)}{t} \right\},$$
 where
$H>0$ is the constant mean curvature of the boundary of $\Omega $
with respect to the inward pointing unit normal.
\item $\mathrm{Ch}(X)=2H(X)=\lim_{t \to \infty}\frac{I(t)}{t}$.
\item Given any sequence of isoperimetric domains $\Omega _n\subset X$ with
volumes tending to infinity, as $n\to \infty $, the sequence of constant mean curvatures
of their boundaries converges to
$H(X)$ and the sequence of radii of these domains diverges
to infinity.
\een
\end{theorem}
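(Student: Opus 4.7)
My overall strategy is to extract everything from the shape of the isoperimetric profile $I(t)$, which is tightly constrained by the homogeneity of $X$. The starting package of facts I would invoke (or establish up front) is: $I$ is continuous, strictly increasing, and concave on $(0,\infty)$ with $I(0^+)=0$; for every isoperimetric domain $\Omega$ of volume $t$, the one-sided derivatives of $I$ at $t$ equal $2H$, where $H$ is the constant mean curvature of $\partial\Omega$ with respect to the inward unit normal; and $H$ is non-increasing in $t$. These follow from the first variation formula, the existence of isoperimetric domains at every volume (available because $X$ is homogeneous), and a Bavard--Pansu-style concavity argument.

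From concavity and $I(0^+)=0$, the function $t\mapsto I(t)/t$ is non-increasing, so
\[
\mathrm{Ch}(X)\;=\;\inf_{t>0}\frac{I(t)}{t}\;=\;\lim_{t\to\infty}\frac{I(t)}{t},
\]
and also $I'(t)\leq I(t)/t$ with both sides sharing the common limit. Combined with $I'(t)=2H$ at isoperimetric solutions, this yields $2H_n\to\mathrm{Ch}(X)$ for every sequence of isoperimetric domains with volumes diverging to $\infty$. To identify $\mathrm{Ch}(X)$ with $2H(X)$ I argue in two directions. First, each $\partial\Omega_n$ belongs to the class $\mathcal{A}$ with $\max|H_{\Sigma}|=H_n$, so $2H(X)\leq 2H_n\to\mathrm{Ch}(X)$. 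For the reverse, given any closed immersed orientable $\Sigma\in\mathcal{A}$ of maximum mean curvature $h$, I would use a Heintze--Karcher-type tubular-neighborhood estimate to produce embedded compact domains $\mathcal{D}$ with $\mathrm{Area}(\partial\mathcal{D})/\mathrm{Vol}(\mathcal{D})$ arbitrarily close to $2h$, giving $\mathrm{Ch}(X)\leq 2h$ and hence $\mathrm{Ch}(X)\leq 2H(X)$. This proves (2) and the mean-curvature half of (3).

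For (1), the connectedness of $\partial\Omega$ in the non-product case is obtained by a homogeneity-plus-sliding argument: if $\partial\Omega$ had two components, I would slide the bounded subdomain enclosed by one component arbitrarily far from the other, reducing to a configuration in which each piece is separately isoperimetric for its own volume, then use strict concavity of $I$ at $t$ (which fails only when $X$ splits off an $\mathbb{R}$-factor, and which therefore requires that the exceptional $\esf^2(\kappa)\times\R$ hypothesis be excluded) to derive $I(t_1)+I(t_2)>I(t)$, a contradiction. The strict bounds $\mathrm{Ch}(X)<I(t)/t$ and $\mathrm{Ch}(X)<2H$ are obtained in the same spirit: any equality would linearize $I$ on an unbounded interval, and via $I'=2H$ and a rigidity analysis of isoperimetric foliations this would force a splitting off of an $\R$-factor, again contradicting the hypothesis. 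For the radii claim of (3), I argue by contradiction: if $r_n\leq R$ along a subsequence, then $\Omega_n\subset N_R(\partial\Omega_n)$, and because the $H_n$ are uniformly bounded, CMC curvature estimates yield $\mathrm{Vol}(\Omega_n)\leq C(R)\,\mathrm{Area}(\partial\Omega_n)$. When $\mathrm{Ch}(X)=0$ this contradicts $\mathrm{Area}/\mathrm{Vol}\to 0$ at once; when $\mathrm{Ch}(X)>0$, I translate the $\Omega_n$ to a fixed base point and extract, via CMC compactness at bounded $H$, a limit $H(X)$-surface whose structure (tied to the $H(X)$-foliation announced in the abstract) is incompatible with $\Omega_n\subset N_R(\partial\Omega_n)$ having arbitrarily large volume.

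The principal obstacle is this last step when $\mathrm{Ch}(X)>0$: turning bounded radius into a genuine contradiction requires serious CMC-convergence theory for embedded surfaces of bounded $H$ in homogeneous three-manifolds and precise information about the limit, which is exactly where the foliation structure promised in the introduction enters and does the heavy lifting. A secondary, more technical difficulty is the reverse Cheeger inequality $\mathrm{Ch}(X)\leq 2H(X)$, because the Cheeger constant is defined using embedded domains while $H(X)$ is defined using immersed surfaces, so one must convert an immersed competitor into an embedded domain with controlled volume and boundary area.
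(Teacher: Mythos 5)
Your plan rests on two foundation stones that are not available, and both are doing essential work.

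\textbf{Concavity of $I$.} You assert that $I$ is concave via ``a Bavard--Pansu-style concavity argument'' and then read off item~(2) and the strict inequalities in item~(1) from it: $I(t)/t$ non-increasing, $I'(t)\leq I(t)/t$ with a common limit, $2H$ non-increasing, and strictness from strict concavity. But concavity of the isoperimetric profile of a \emph{non-compact homogeneous} three-manifold is not a known general fact, and Bavard--Pansu type differential inequalities that force concavity require $\mathrm{Ric}\geq 0$. The homogeneous manifolds that matter most here have negative Ricci eigenvalues --- e.g. for the symmetric left invariant metric on $\sl$ the Ricci eigenvalues are $-6,-6,2$ --- so no such curvature hypothesis holds. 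The paper is careful to establish only what holds unconditionally: $I$ is locally Lipschitz with one-sided derivatives satisfying $I'_+(t)\leq 2H\leq I'_-(t)$, $I$ is non-decreasing (strictly increasing when $X\cong\R^3$, Lemma~\ref{lem2.1}), $I(nt)<nI(t)$ (Lemma~\ref{lem2.2}), and $\mathrm{Ch}(X)<I(t)/t<\mathrm{Ch}(X)+\ve$ for large $t$ (Lemma~\ref{lem2.3}). Note that $I'_+(t)\leq I'_-(t)$ at every point is automatic from isoperimetric comparisons, but this is strictly weaker than concavity and does \emph{not} imply that $I'$ is globally non-increasing. If concavity were available, Lemmas~\ref{lem2.2}, \ref{lem2.3} and \ref{lem2.4} would collapse to one-line remarks; the fact that the paper spends several pages proving them should be taken as a strong signal that concavity is not in the toolbox.

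\textbf{The inequality $\mathrm{Ch}(X)\leq 2H(X)$.} Your proposal is to take an immersed $\Sigma$ with $\max|H_\Sigma|=h$ and use ``a Heintze--Karcher-type tubular-neighborhood estimate'' to produce embedded domains with area-to-volume ratio close to $2h$. This step fails on three counts. First, Heintze--Karcher requires $\mathrm{Ric}\geq 0$, which, as noted above, does not hold in the interesting cases $\sl$ and non-unimodular semidirect products. Second, tubular-neighborhood estimates inherently control volume \emph{from above} by area, i.e.\ they lower-bound $\mathrm{Area}/\mathrm{Vol}$, which is the wrong direction: bounding $\mathrm{Ch}(X)$ from \emph{above} requires producing competitors with a small boundary-to-volume ratio, not ruling them out. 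Third, a single surface with a small $\max|H_\Sigma|$ carries no information about how to build such an efficient Cheeger competitor --- it could have very large mean curvature on most of its area. The paper's actual route is to construct a global product foliation $\cF$ of $X$ by constant-mean-curvature-$H(X)$ surfaces (this is most of Sections~\ref{secCh+} and~\ref{sec5} when $X\cong\sl$; it is the horizontal-plane foliation in the semidirect-product case), and then to build large box-shaped domains adapted to $\cF$ and apply the divergence theorem to the unit normal field $N_\cF$; see Assertion~\ref{claim4.6bis}. The inequality $2H(X)\leq\mathrm{Ch}(X)$ is the easy direction (Lemma~\ref{lem2.4}, by looking at $\partial\Omega_n$ directly); $\mathrm{Ch}(X)\leq 2H(X)$ is where the foliation machinery is indispensable.

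Your treatment of part~(3) and your connectedness sketch in part~(1) are closer in spirit to the paper: connectedness is indeed obtained by homogeneity-plus-sliding arguments (Remark~\ref{remark1}), and the radius divergence when $\mathrm{Ch}(X)>0$ is indeed proved via CMC compactness against the $H(X)$-foliation (Theorem~\ref{corcon}). You correctly flag the construction of that foliation as the principal obstacle. To repair the proposal, you would need to replace the concavity hypothesis by the unconditional Lemmas~\ref{lem2.1}--\ref{lem2.4}, and replace the Heintze--Karcher step by the explicit foliation-and-box construction; in the $\sl$ case the foliation itself requires the open/closed argument in the moduli space of left invariant metrics carried out in Section~\ref{sec5}.
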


Theorem~\ref{t1} provides additional information about the isoperimetric profile of a
non-compact, simply connected homogeneous three-manifold, see Corollary~\ref{corol1.5}.

The organization of the paper is as follows. In
Section~\ref{secprel} we provide some background information about
the isoperimetric profile of a non-compact, simply connected,
homogeneous three-manifold $X$, and prove some parts of items~(1)
and (2) of Theorem~\ref{t1}.
The rest of the proof of Theorem~\ref{t1}
is divided into two cases; in Section~\ref{secCh0} we consider
the case when the Cheeger constant of $X$ is zero, and in
Sections~\ref{secCh+}, \ref{sec5}, \ref{sec6} we will deal with the case when
$\mathrm{Ch}(X)>0$. When
$\mathrm{Ch}(X)$ is positive, we prove in Theorems~\ref{corcon}
and
\ref{corcon3} that
the boundaries of
isoperimetric domains of large volume in $X$ are in a natural sense
well-approximated by the leaves of a certain foliation $\cF$ of $X$,
where these leaves are surfaces of constant mean curvature $H(X)$.
As a matter of fact, we will prove in Section~\ref{sec6} the following existence result.

\begin{theorem}
\label{thm1.6}
Let $X$ be a homogeneous three-manifold diffeomorphic to $\R^3$.
Then, there exists a foliation $\cF$ of $X$ by surfaces of constant
mean curvature $H(X)$ with the following properties.
\ben[(1)]
\item There exist a 1-parameter subgroup $\G $ and a $(\Z \times \Z)$-subgroup $\Delta $
of the isometry group \mbox{\rm Iso}$(X)$ of $X$, both acting freely on $X$,
such that each of the leaves of $\cF$ is invariant under $\G $ and $\Delta $.
\item All leaves of $\cF$ are congruent in $X$; more precisely, there exists a
1-parameter subgroup $\wt{\G }$ of \mbox{\rm Iso}$(X)$ acting freely on $X$ such that
$\cF =\{ \phi (\Sigma )\ | \ \phi \in \wt{\G }\} $, where $\Sigma $ is any particular leaf of
$\cF $.
\item Each orbit of the action of $\wt{\G }$ on $X$ intersects every leaf of $\cF $ transversely at a
single point.
\een
\end{theorem}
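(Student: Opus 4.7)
The strategy combines the Lie group structure of $X$ with the results established earlier in the paper. By work of Meeks--P\'erez based on Milnor's classification of three-dimensional Lie groups, every non-compact simply connected homogeneous three-manifold diffeomorphic to $\R^3$ is isometric to a metric Lie group. Under this identification, $X$ is isomorphic as a Lie group either to a semidirect product $\R^2 \rtimes_A \R$ for some $A \in M_2(\R)$, or to $\sl$ with a left-invariant metric. My plan is to treat these two cases in parallel, focusing mostly on the first, which covers most standard examples including $\R^3$, $\H^3$, $\H^2 \times \R$, $\mathrm{Nil}_3$, and $\sol$.

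For $X = \R^2 \rtimes_A \R$, I would consider the normal subgroup $N = \R^2 \rtimes_A \{0\} \cong \R^2$, which acts on $X$ freely by isometric left multiplication and preserves the natural horizontal foliation $\cF_0$ of $X$ by left cosets of $N$. I would take $\G$ to be any 1-parameter subgroup of $N$, $\Delta$ any full-rank $\Z \times \Z$ lattice in $N$, and $\wt\G$ the complementary 1-parameter subgroup $\{0\} \rtimes \R$ transverse to the leaves; these subgroups act freely on $X$ by left translation, and items~(1)--(3) are immediate once the foliation is known to consist either of cosets of $N$ or of translates of a symmetric $H(X)$-surface. The central question becomes whether the leaves of $\cF_0$ have mean curvature equal to $H(X)$: when this holds (precisely when $H(X) = \tfrac{1}{2}|\tr(A)|$), one may set $\cF = \cF_0$; otherwise (as in $\H^2 \times \R$, where the horizontal planes are minimal but $H(\H^2\times\R) = 1/2$), the desired foliation is constructed from an $H(X)$-surface $\Sigma_0$ invariant under both $\G$ and $\Delta$. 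Such a $\Sigma_0$ exists by reducing the CMC equation to an ODE via the 2-parameter abelian symmetry, and the foliation $\cF$ is then obtained as $\{\phi(\Sigma_0) : \phi \in \wt\G\}$.

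The $\sl$ case is handled by an analogous construction using the structure of $\sl$ as an $\Ek$-space with $\kappa < 0$, fibered over $\H^2$ by a Killing submersion: $\cF$ is taken to be the foliation by preimages of horocycles (horocylinders), with $\G$ the vertical Killing 1-parameter subgroup, $\Delta$ a rank-2 lattice combining a vertical discrete subgroup with a discrete parabolic action fixing the common ideal point of the horocycles, and $\wt\G$ a 1-parameter subgroup of horizontal hyperbolic translations transverse to the foliation.

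The main technical obstacle across all cases is verifying that the constructed leaves have mean curvature exactly $H(X)$ and not some smaller value. When $\mathrm{Ch}(X) > 0$ this can be extracted from Theorems~\ref{corcon} and~\ref{corcon3}, whose output foliation must be congruent to the explicit one constructed above by the homogeneity of $X$ together with a uniqueness result for symmetric $H(X)$-surfaces. When $\mathrm{Ch}(X) = 0$ (so $H(X) = 0$), one argues directly in each of the few explicit examples (flat $\R^3$, $\mathrm{Nil}_3$, and the remaining semidirect products with $\mathrm{Ch}(X) = 0$) that the appropriate horizontal foliation is by minimal surfaces, which together with the invariance discussion above completes the proof.
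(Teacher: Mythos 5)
There is a genuine gap in the $\sl$ case, and a smaller misconception in the semidirect product case.

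For $\sl$ your plan is to take the foliation by horocylinders. However, this only produces a constant mean curvature foliation when the left‑invariant metric has a $4$‑dimensional isometry group (the $\Ek$ case). For the \emph{generic} left‑invariant metric on $\sl$, whose isometry group is $3$‑dimensional, horocylinders fail to have constant mean curvature at all — the paper states this explicitly right after Lemma~\ref{lemma4.3}. This is precisely the reason that the bulk of Section~\ref{sec5} exists: the paper defines the set $\cA$ of left‑invariant metrics for which the desired $\Gamma^P$‑ and $\Delta$‑invariant CMC $H(X)$‑surface $\Sigma$ exists, proves $\cA$ is nonempty (the rotationally symmetric metric, where your horocylinder argument \emph{does} work), and then runs a connectedness argument — closedness via curvature estimates and a Jacobi‑field rigidity argument, openness via the implicit function theorem in the Appendix (Proposition~\ref{prop:open}). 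Your proposal has no substitute for this step; "reducing the CMC equation to an ODE via the 2-parameter abelian symmetry" does not apply here, because the two invariance directions $\G^P,\G^E$ are not an abelian pair in $\sl$, and the would‑be symmetric surface is not known in advance to have mean curvature $H(X)$ as opposed to some other value. Proposition~\ref{claim:3d} (which also proves $2H(X)=\mathrm{Ch}(X)$ as a byproduct) is the key missing lemma.

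In the semidirect product case your plan is basically right, but the dichotomy you set up is vacuous. By Proposition~\ref{propos2.5}, for \emph{every} $\R^2\rtimes_A\R$ with its canonical metric one has $\mathrm{Ch}(X)=\mathrm{trace}(A)=2H(X)$, and (\ref{eq:12}) shows the horizontal planes $\R^2\rtimes_A\{z\}$ have constant mean curvature $\frac{1}{2}\mathrm{trace}(A)=H(X)$ — always, with no exceptional subcase. Your counterexample $\H^2\times\R$ rests on the wrong picture: in the paper's semidirect product model with $A=\mathrm{diag}(1,0)$, the leaves $\R^2\rtimes_A\{z\}$ are not the totally geodesic slices $\H^2\times\{t\}$ but rather horocylinders (horocycle $\times\R$), whose mean curvature is $1/2=H(\H^2\times\R)$. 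So the "otherwise" branch you introduce never occurs, and the paper simply uses $\cF=\{\R^2\rtimes_A\{z\}\}$ directly, taking $\Delta$ a lattice in $\R^2\rtimes_A\{0\}$, $\Gamma$ a $1$‑parameter subgroup of $\R^2\rtimes_A\{0\}$, and $\wt{\G}=\{0\}\rtimes_A\R$.

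Your appeal to Theorems~\ref{corcon} and~\ref{corcon3} to pin down the mean curvature also inverts the logical order: those theorems are \emph{proved using} the foliation of Proposition~\ref{claim:3d}, so they cannot be invoked to establish its existence.
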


Our interest in results like the ones described above arises from our paper~\cite{mmpr1}, where we
classify the moduli space of constant mean curvature spheres in a
homogeneous three-manifold $X$ in terms of $H(X)$.
%
The
work in~\cite{mmpr1} and in the present paper support the following conjecture:

\begin{conjecture}[Uniqueness of Isoperimetric Domains Conjecture] \label{conj}
If  $X$ is a homogeneous three-manifold
diffeomorphic to $\rth$, then for each $V>0$, there exists a
{\em unique} (up to congruencies) isoperimetric domain
$\Omega(V)$ in $X$ with volume $V$. Furthermore,
$\Omega(V)$ is topologically a compact ball.
\end{conjecture}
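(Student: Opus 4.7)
The plan is to attack this conjecture by combining three ingredients: a topological restriction forcing isoperimetric boundaries to be $2$-spheres, the classification of immersed constant mean curvature spheres in $X$ established in~\cite{mmpr1}, and the monotonicity of the profile relation $I'(V)=2H(V)$ which holds for non-compact simply connected homogeneous three-manifolds.

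First I would reduce the uniqueness statement to two assertions: (a) for every $V>0$, the boundary $\partial\Omega(V)$ of any isoperimetric domain of volume $V$ is a topological $2$-sphere, and (b) for each value $H>H(X)$ realized as the mean curvature of some isoperimetric boundary, there exists, up to ambient isometry, a unique immersed CMC sphere in $X$ with constant mean curvature $H$. Assertion~(b) is precisely the main classification result of~\cite{mmpr1}. Combining (a) and (b) with the derivative formula $I'(V)=2H(V)$, together with Theorem~\ref{t1}(2) (which pins down the limiting value $H(V)\to H(X)$ as $V\to\infty$) and the classical small-volume asymptotics $H(V)\to\infty$ as $V\to 0$, one deduces that $V\mapsto H(V)$ is a continuous bijection $(0,\infty)\to (H(X),\infty)$. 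Then every isoperimetric domain is determined up to congruence by its mean curvature, and hence by its volume, giving the desired uniqueness; the topological ball conclusion is automatic from~(a).

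To prove (a), I would exploit the strong stability of $\partial\Omega$. Since $\Omega$ minimizes area among domains of volume $V$, the boundary is stable for volume-preserving variations, so its stability operator is nonnegative on mean-zero test functions. Using the abundant ambient Killing fields on $X$ (present because $X$ is homogeneous), one produces Jacobi fields and applies Hersch--Yang type eigenvalue estimates together with Ritor\'e--Ros area--genus inequalities, adapted to the homogeneous setting via the metric Lie group structure of $X$. For large volumes, Theorem~\ref{t1}(3) and Theorems~\ref{corcon}--\ref{corcon3} confine $\partial\Omega(V)$ to a tubular neighborhood of leaves of the foliation $\cF$ of Theorem~\ref{thm1.6}; combining this with an Alexandrov-style moving argument along the one-parameter subgroup $\wt{\G}$ acting transversely to $\cF$ should force $\partial\Omega(V)$ to intersect each $\wt{\G}$-orbit in a single connected arc, hence to be topologically a sphere. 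For small $V$, one blows up the metric to reduce to an approximately Euclidean situation where the sphere conclusion and local uniqueness follow from the implicit function theorem applied to the CMC equation linearized at a round sphere.

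The main obstacle is to interpolate between these two regimes---extending local uniqueness from the small-volume end and foliation-approximation from the large-volume end to cover the entire range of volumes, and in particular ruling out bifurcation in the one-parameter family $\{\partial\Omega(V)\}_{V>0}$. The bifurcation question amounts to a global nondegeneracy statement for the Jacobi operator along the full branch of CMC spheres parametrized by $H\in(H(X),\infty)$, and would require finer geometric information about the moduli space of CMC spheres in $X$ than what is developed here. This is presumably the reason the statement appears as Conjecture~\ref{conj} rather than a theorem, and it is where the deepest work would lie.
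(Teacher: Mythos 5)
The statement you were asked about is not a theorem of the paper---it is Conjecture~\ref{conj}, which the authors explicitly leave open. There is no proof in the paper to compare your proposal against; the results they actually establish (Theorems~\ref{t1}, \ref{thm1.6}, \ref{corcon}, \ref{corcon3}) only control the asymptotic behavior of isoperimetric domains as the volume tends to infinity and prove the identity $\mathrm{Ch}(X)=2H(X)=\lim_{t\to\infty}I(t)/t$, which is much weaker than the uniqueness assertion. You correctly recognize this in your last paragraph, and your submission is best read as a research plan rather than a proof.

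A few concrete gaps in the plan that go beyond what you flagged. First, the reduction to ``the boundary is a sphere, CMC spheres with given $H$ are unique, hence the domain is determined by $V$'' is not tight: property (II) of the profile (and Lemma~\ref{lem2.1}) gives $I'_+(t)\leq 2H\leq I'_-(t)$, so at a point where $I$ has a genuine corner there can, a priori, be a whole interval of mean curvatures realized by distinct isoperimetric boundaries at the same volume. To run your argument you would need to establish that $I$ is $C^1$ (equivalently $I'_+=I'_-$ everywhere), and this is itself a form of the uniqueness you are trying to prove; you cannot assume it. Second, assertion (a) --- that $\partial\Omega(V)$ is always a $2$-sphere --- is not established anywhere in this paper, and the Ritor\'e--Ros area-versus-genus estimates you invoke are calibrated for space forms; adapting them to a generic metric Lie group with three-dimensional isometry group is a serious open step, not a routine transcription. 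Third, even granting (a) and (b), the map $V\mapsto H(V)$ being a bijection onto $(H(X),\infty)$ requires strict monotonicity of $H(V)$; monotonicity of $I$ (Lemma~\ref{lem2.1}) gives $H(V)\geq 0$ but nothing about injectivity in $H$. Your honest identification of the bifurcation/nondegeneracy obstruction along the full branch of CMC spheres as the crux of the difficulty is the right diagnosis, and it is exactly why this remains a conjecture.

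Your outline is a reasonable high-level plan consistent with the authors' stated program (they motivate the conjecture by their classification of CMC spheres in~\cite{mmpr1}), but be clear in your own notes that none of the three ingredients (a), (b), and the $C^1$-regularity/monotonicity of the profile is presently proved in the generality needed, and that completing any one of them would itself be a publishable advance.
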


%
%

Several results in this paper admit generalizations to the
$n$-dimensional case, where one assumes that $n\leq 7$ in order to avoid lack of
regularity of the boundaries of isoperimetric domains.
Nevertheless, for the sake of simplicity we will develop here only the
case where the ambient homogeneous manifold is three-dimensional.

\section{Background material.}
\label{secprel}

\subsection{Isoperimetric profile.}
Since $X$ is homogeneous and the dimension of $X$ is less than 8,
then by regularity and existence results in geometric measure
theory, for each $t>0$, there exists at least one smooth compact
domain $\Omega \subset X$ that is a solution to the isoperimetric
problem in $X$ with volume $t$, i.e., whose boundary surface
$\partial \Omega $ minimizes area among all boundaries of
domains in $X$ with volume~$t$; as usual, we  call such $\Omega $ an
{\it isoperimetric domain.} It is well-known that the (possibly non-connected)
boundary $\partial \Omega $ of an isoperimetric domain $\Omega $ has constant
mean curvature. In the sequel, we will always orient $\partial \Omega $ with respect to
the inward pointing unit normal vector of $\Omega$.

For any $\ve>0$ and
$t\geq\ve$, there exist uniform estimates for the norm of the second
fundamental forms of the isoperimetric surfaces\footnote{An
{\em isoperimetric surface} is the boundary of an isoperimetric domain.}
enclosing volume $t$, see e.g.,  the proof of Theorem 18 in
the survey paper by Ros~\cite{ros10} for a sketch of proof of this
curvature estimate.

Consider the isoperimetric profile $I\colon
(0, \infty )\to (0,\infty) $ of
 $X$ defined in (\ref{isop}). This profile has been extensively studied
 in greater generality. We next recall some basic properties of it, see
e.g., Bavard and Pansu~\cite{bp1}, Gallot~\cite{gll1} and
Ros~\cite{ros10}:
\begin{enumerate}[(I)]
\item $I$ is locally Lipschitz. In particular, its derivative $I'$
exists almost everywhere in $(0,\infty )$ and for every $0<t_0\leq t_1$,
\[
I(t_1)-I(t_0)=\int _{t_0}^{t_1}I'(t)\, dt.
\]
\item $I$ has left and right derivatives $I'_-(t)$ and $I'_+(t)$ for any
value of $t\in (0,\infty )$. Moreover if $H$ is the mean curvature
of an isoperimetric surface $\partial \Omega $ with Volume$(\Omega
)=t$ (with the notation above), then $I'_+(t)\leq 2H\leq I'_-(t)$.
\item The limit as $t\to 0^+$ of $\displaystyle \frac{I(t)}{(36\pi t^2)^{1/3}}$ is 1.
\end{enumerate}

\begin{remark}
\label{remark1}
{\rm
\ben[(i)]
\item Every  isoperimetric domain $\Omega$ in a non-compact
homogeneous three-manifold is connected; otherwise translate one  component of $\Omega$
 along a continuous 1-parameter family of isometries until it touches
 another component tangentially a first time to obtain a contradiction
to boundary regularity of solutions to isoperimetric domains.

\item From the discussion at the beginning of Section~\ref{subsec2.2},
if $X$ is not isometric to $\S^2(\kappa)\times\R$,
 then $X$ is diffeomorphic to $\rth$. Suppose that $X$ is diffeomorphic to $\rth$ and
 $\Omega$ is an isoperimetric domain  in $X$.
As $\Omega $ is a connected compact domain in $X\approx\rth$, the boundary $\partial \Omega $
contains a unique outer boundary component $\partial$ (here, outer means the component of the
boundary of $\Omega$ that is contained in the boundary of the unbounded component
of $X -\Omega$). We claim that $\partial \Omega =\partial $ and that $\partial \Omega $
has positive mean curvature with respect to the inward pointing normal vector to $\Omega $.
Since connected, compact embedded surfaces in $\rth$  bound unique smooth
compact regions, we can translate a superimposed copy
of the compact region $\Omega (\partial )\subset X$ enclosed by $\partial $
along a 1-parameter group of ambient isometries until the translated copy intersects
$\Omega (\partial )$ a last time at some point $p$. At this last point $p$ of contact,
 the outer boundaries of the two intersecting domains intersect on opposite sides
of their common tangent plane at $p$. The maximum principle
for constant mean curvature surfaces applied at $p$ demonstrates that the mean curvature of $\partial$
is positive with respect to the inward pointing normal of $\Omega$.
Another simple continuous translation argument of a possible interior boundary component $\partial '$
of $\Omega$,  applied in the interior of $\Omega (\partial )$,
gives a contradiction that implies that
the boundary of an isoperimetric domain $\Omega$ is equal to
its outer boundary component; hence $\partial \Omega$ is connected.

\item In the case
that $X$ is isometric to $\esf^2(\kappa)\times \R$, similar
arguments to those appearing in item~(ii) show that the boundary of an isoperimetric
domain $\Omega$ in $X$ is either connected with constant
positive mean curvature, or $\Omega$ is
the product domain $\esf^2(\kappa)\times [R_1,R_2] $ for some $R_1<R_2$. In fact,
Pedrosa~\cite{ped1} proved that there exists $V_0>0$ such that
if $\Omega \subset X$ is an isoperimetric domain with Vol$(\Omega )=V>0$,
then $\Omega $ is a rotationally symmetric ball if $V<V_0$, and $\Omega =
\esf^2(\kappa)\times [R_1,R_2]$ if $V>V_0$ for some $R_1<R_2$.
 In particular, the isoperimetric profile
of $X$ is constant in the interval $[V_0,\infty )$.
\een
}
\end{remark}

The main goal of this section is to prove the next four lemmas.
\begin{lemma}
\label{lem2.1}
  Given $t\in (0,\infty )$, $\frac{1}{2}I'_+(t)$ (resp. $\frac{1}{2}I'_-(t)$)
  equals the infimum
  (resp. supremum) of the mean curvatures
  of isoperimetric surfaces in $X$ enclosing volume $t$. In fact, this
infimum (resp. supremum) is achieved for some isoperimetric domain. Furthermore, the function
$I$ is non-decreasing and strictly increasing when $X$ is diffeomorphic to $\R^3$.
\end{lemma}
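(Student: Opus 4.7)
The plan is to couple property~(II) from the background---which already gives $\tfrac12 I'_+(t)\leq H\leq \tfrac12 I'_-(t)$ for every isoperimetric domain $\Omega$ at volume $t$ with boundary mean curvature $H$---with a compactness-plus-variation argument. The first ingredient is the compactness of the moduli space of isoperimetric domains of volume close to any fixed $t>0$, modulo congruences. Here, the uniform second-fundamental-form estimate recalled in this section, together with the homogeneity of $X$, enters: from any sequence $\Omega_n$ of isoperimetric domains of volumes $t_n\to t$, after applying suitable isometries of $X$ one can extract a subsequence whose boundaries converge smoothly to a smooth compact embedded surface bounding a domain $\Omega_\infty$ with $\mathrm{Vol}(\Omega_\infty)=t$; since the area functional is lower semicontinuous and $I$ is continuous (property~(I)), $\Omega_\infty$ is again isoperimetric, and the mean curvatures $H_n$ of $\partial\Omega_n$ converge to the mean curvature $H_\infty$ of $\partial\Omega_\infty$.

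To identify $\tfrac12 I'_-(t)$ with the supremum of mean curvatures of isoperimetric boundaries at volume $t$, I would pick a sequence $s_n\nearrow t$ realizing the left derivative, that is, $(I(t)-I(s_n))/(t-s_n)\to I'_-(t)$, and choose isoperimetric $\Omega_n$ of volume $s_n$ and mean curvature $H_n$. Pushing $\partial\Omega_n$ outward along its unit normal by the small distance $r_n$ which raises the enclosed volume from $s_n$ to $t$, the first variation of area for a constant mean curvature surface gives a comparison surface whose area is
\[
\mathrm{Area}(\partial\Omega_n^{r_n})=I(s_n)+2H_n(t-s_n)+o(t-s_n).
\]
Combining with $I(t)\leq \mathrm{Area}(\partial\Omega_n^{r_n})$ and dividing by $t-s_n>0$ yields $(I(t)-I(s_n))/(t-s_n)\leq 2H_n+o(1)$, so in the limit $I'_-(t)\leq 2H_\infty$. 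Since property~(II) gives the reverse inequality $2H_\infty\leq I'_-(t)$, the supremum is attained by $\Omega_\infty$ and equals $\tfrac12 I'_-(t)$. A symmetric argument with $s_n\searrow t$ and an inward variation of $\partial\Omega_n$ handles the infimum statement with $\tfrac12 I'_+(t)$.

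The monotonicity assertions follow quickly. The identity $I'_+(t)=2H_{\min}(t)$ just established, together with the non-negativity of the mean curvatures of isoperimetric boundaries observed in Remark~\ref{remark1}, implies $I'_+(t)\geq 0$ for all $t>0$; combined with the absolute continuity of $I$ from property~(I) this gives $I(t_2)-I(t_1)=\int_{t_1}^{t_2}I'(s)\,ds\geq 0$, so $I$ is non-decreasing. When $X$ is diffeomorphic to $\R^3$, Remark~\ref{remark1}(ii) says that every isoperimetric boundary has strictly positive mean curvature, hence $I'_+(t)>0$ for all $t>0$; if we had $I(t_1)=I(t_2)$ with $t_1<t_2$, then by non-decreasingness $I$ would be constant on $[t_1,t_2]$, contradicting $I'_+(t_1)>0$, so $I$ is strictly increasing in this case.

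The main technical obstacle I expect is the compactness step: one needs to guarantee that the smooth subsequential convergence of $\partial\Omega_n$, modulo isometries of $X$, is strong enough to make the $o(t-s_n)$ remainder in the first variation formula uniform in $n$, so that the comparison inequality survives the passage to the limit. This requires combining the curvature estimate from~\cite{ros10} with the homogeneous background to produce, for $n$ large, a tubular neighbourhood of $\partial\Omega_n$ of size bounded away from zero on which the normal exponential map is a diffeomorphism with uniformly controlled geometry.
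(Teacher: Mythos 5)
Your proof is correct, but it takes a genuinely different route from the paper's at the key step. To show that $\tfrac12 I'_-(t)$ is the \emph{attained} supremum of boundary mean curvatures at volume $t$, the paper exploits the absolute continuity of $I$: since $I'$ exists a.e.\ and $I(t_1)-I(t_0)=\int_{t_0}^{t_1}I'$, an integral comparison produces a sequence $t_n\to t_0$ of differentiability points with $I'(t_n)$ close to the one-sided derivative; at such points property~(II) forces $I'(t_n)=2H_n$ exactly, and then compactness modulo the isometry group of $X$ passes to a limiting isoperimetric domain at volume $t_0$. You instead run a direct first-variation (outward/inward normal push) argument, comparing an isoperimetric surface at a nearby volume $s_n$ with one at volume $t$, and this produces the needed one-sided inequality directly without appealing to a.e.\ differentiability; in effect you re-derive one direction of property~(II) for a pair of nearby volumes rather than invoking it at a single differentiability point. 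Both approaches then invoke the same compactness, and your monotonicity argument matches the paper's. The trade-off is that your variational argument needs the $o(t-s_n)$ remainder to be uniform in $n$; you correctly flag this, and it is indeed resolved by the uniform second-fundamental-form estimate for isoperimetric surfaces with volume bounded below, which controls both the Taylor remainder in the normal-push expansion and the size of the tubular neighbourhood. So nothing is missing, but you have to supply that uniformity, whereas the paper sidesteps it by freezing at differentiability points instead of varying the volume all the way from $s_n$ to $t$.
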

\begin{proof}
We will prove the stated properties for the case of $I'_+(t)$ and
leave the similar case of $I'_-(t)$ to the reader. Fix $t_0\in
(0,\infty )$ and let $\mathcal{B}$ be the family of isoperimetric
surfaces in $X$ enclosing volume $t_0$. Since $I$ is locally
Lipschitz, then
$I$ is differentiable in
$[t_0,t_0+1]-A$, where $A\subset [t_0,t_0+1]$ is a set of measure
zero, and the function $t\in [t_0,t_0+1]\mapsto I'(t)$ is integrable
in $[t_0,t_0+1]$.

We claim that there exists a sequence $t_n\in (t_0,t_0+1]-A$
converging to $t_0$ such that $I'(t_n)<I_+'(t_0)+\frac{1}{n}$;
otherwise, there exist numbers $\ve ,\de >0$ such that $I'(t)\geq
I_+'(t_0)+\de $ for all $t\in (t_0,t_0+\ve )-A$. This inequality
implies that for $t\in (t_0,t_0+\ve )$,
\[
\frac{I(t)-I(t_0)}{t-t_0}=\frac{1}{t-t_0}\int _{t_0}^tI'(t)\, dt\geq
I_+'(t_0)+\de ,
\]
which contradicts the existence of the right derivative of $I$
at $t_0$ and therefore, proves our claim.

Given $n\in \N$, there exists an isoperimetric domain $\Omega
_n\subset X$ with volume $t_n$. As $t_n\in (t_0,t_0+1]-A$, then $I$
is differentiable at $t_n$ and property (II) stated just before Remark~\ref{remark1} implies that
$I'(t_n)=2H_n$, where $H_n$ denotes the constant mean curvature of
$\partial \Omega _n$. Since $X$ is homogeneous, standard compactness
results imply that after possibly passing to a subsequence, the
$\Omega _n$ converge to an isoperimetric domain $\Omega _{\infty }
\subset X$ enclosing volume $t_0$, and the sequence of  mean curvatures $H_n$ of their boundaries
converge to the mean curvature $H_{\infty }$ of $\partial \Omega
_{\infty }\in \mathcal{B}$ as $n\to  \infty $. Thus, by the claim proved above,
$I'_+(t_0)+\frac{1}{n}>2H_n$ and taking $n\to \infty $,
$I_+'(t_0)\geq
2H_{\infty }\geq 2\inf \{ H(\Sigma )\ | \ \Sigma \in \mathcal{B}\}
$, where $H(\Sigma )$ denotes the constant mean curvature of $\Sigma
\in \mathcal{B}$. The inequality $I_+'(t_0)\leq 2\inf \{ H(\Sigma )\
| \ \Sigma \in \mathcal{B}\} $ follows directly from property (II)
above, which completes the proof of the first sentence of the lemma.
The second statement (that the infimum is achieved) holds since
$I_+'(t_0)=2 H_{\infty }$, which is the mean curvature of the
isoperimetric domain $\Omega _{\infty }$.

Recall from Remark~\ref{remark1} that the mean curvature of the boundary of
every isoperimetric domain in $X$ is non-negative, and it is positive when $X$
is diffeomorphic to $\R^3$. As for every $t>0$, $I'_+(t)$ is twice the mean curvature of some
isoperimetric domain in $X$ enclosing volume $t$, then $0\leq I'_+(t)\leq I'_-(t)$ for every $t>0$,
from where one deduces that $I$ is non-decreasing. In the case $X$ is diffeomorphic to $\R^3$,
the same argument gives that $I$ is strictly increasing and the proof is complete.
\end{proof}

\begin{lemma}
\label{lem2.2} With the notation above,
\begin{enumerate}[(1)]
\item  Given $t>0$ and $n\in \N$, we have $I(nt)<n\, I(t)$.
\item $\mathrm{Ch}(X)<\frac{I(t)}{t}$ for all $t>0$.
\end{enumerate}
\end{lemma}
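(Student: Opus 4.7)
My plan is to prove (1) by producing an explicit competitor at volume $nt$ consisting of $n$ pairwise disjoint isometric copies of an optimal domain at volume $t$, and then to deduce (2) directly from (1) via the infimum characterization $\mathrm{Ch}(X)=\inf_{s>0} I(s)/s$ noted in the introduction. For (1), I fix $t>0$, take an isoperimetric domain $\Omega\subset X$ of volume $t$ with $\mbox{Area}(\partial\Omega)=I(t)$, and fix a basepoint $p_0\in\Omega$. I then inductively select isometries $\phi_1,\dots,\phi_n\in\mathrm{Iso}(X)$ so that $\Omega_i:=\phi_i(\Omega)$ are pairwise disjoint: at step $k$, the closure $\overline{\Omega_1\cup\cdots\cup\Omega_{k-1}}$ is compact, and since $X$ is non-compact with infinite diameter, transitivity of $\mathrm{Iso}(X)$ lets me choose $\phi_k$ so that $\phi_k(p_0)$ lies outside the $\mathrm{diam}(\Omega)$-neighborhood of that closure, which forces $\Omega_k$ to be disjoint from all previously placed copies. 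The union $\Omega_*:=\bigcup_{i=1}^n\Omega_i$ is then a smooth compact domain of volume $nt$ whose boundary has area exactly $nI(t)$, yielding the non-strict bound $I(nt)\le nI(t)$.

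To upgrade to strict inequality in the substantive range $n\ge 2$, I argue by contradiction: if $I(nt)=nI(t)$, then $\Omega_*$ itself would be an isoperimetric domain of volume $nt$, yet it has $n\ge 2$ connected components, contradicting the connectedness of isoperimetric domains in a non-compact homogeneous three-manifold established in Remark~\ref{remark1}(i). This completes (1). For (2), given any $t>0$ I apply (1) with $n=2$ to obtain $I(2t)/(2t)<I(t)/t$, and combine with $\mathrm{Ch}(X)\le I(2t)/(2t)$ to deduce $\mathrm{Ch}(X)<I(t)/t$. I do not anticipate any real obstacle: the only delicate step is the construction of the pairwise disjoint translates, and this follows cleanly from non-compactness plus homogeneity, while the strict-inequality upgrade rests entirely on Remark~\ref{remark1}(i).
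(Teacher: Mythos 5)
Your proof is correct and takes essentially the same route as the paper: construct disjoint isometric translates of an isoperimetric domain to get $I(nt)\le n\,I(t)$, invoke the connectedness of isoperimetric domains (Remark~\ref{remark1}(i)) to upgrade this to a strict inequality, and then derive item~(2) from item~(1) with $n=2$. The only cosmetic differences are that you spell out the inductive placement of the disjoint translates (where the paper just asserts the collection exists by homogeneity and non-compactness), and you deduce item~(2) directly via $\mathrm{Ch}(X)\le I(2t)/(2t)<I(t)/t$ rather than by the paper's contradiction argument; both are sound and equivalent in substance.
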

\begin{proof}
Given $t>0$, let $\Omega \subset X$ be an isoperimetric domain with
volume $t$. Since $\Omega $
  is compact and $X$ is non-compact and homogeneous, there exists an
  isometry $\phi $ of $X$ such that
$\phi (\Omega )$ is disjoint from $\Omega $.
As $\Omega \cup \phi (\Omega )$ has volume $2t$ but it is not an isoperimetric
domain (since it is not connected), then
\[
I(2t)<\mbox{Area}[\partial (\Omega \cup \phi (\Omega ))] =2\mbox{
Area}(\partial \Omega )=2I(t).
\]
Item~(1) follows by applying this argument to a collection $\{\Omega, \phi_1(\Omega),
\phi_2(\Omega), \ldots ,$ $ \phi_{n-1}(\Omega)\}$ of pairwise
disjoint translated copies of $\Omega$.

By definition of Ch$(X)$, we have Ch$(X)\leq \frac{I(t)}{t}$ for
each $t$. If Ch$(X)$ were equal to $\frac{I(t_0)}{t_0}$ for some
$t_0>0$, then item~(1) of this lemma would imply that
\[
\frac{I(2t_0)}{2t_0}<\frac{2\,
I(t_0)}{2t_0}=\frac{I(t_0)}{t_0}=\mbox{Ch}(X),
\]
which is absurd. This proves $\mathrm{Ch}(X)<\frac{I(t)}{t}$ for all $t>0$.
%
\end{proof}

\begin{lemma}
\label{lem2.3} For any $\ve>0$, there exists $V_{\ve}>0$ such that
for every isoperimetric domain $\Omega \subset X$  with volume
greater than $V_{\ve}$,
\begin{equation}
\label{eq:lem2.3} \mathrm{Ch}(X)<\frac{\mathrm{Area}(\partial
\Omega)} {\mathrm{Vol}(\Omega )}<\mathrm{Ch}(X) +\ve.
\end{equation}
\end{lemma}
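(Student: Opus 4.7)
The lemma decomposes into two inequalities. The lower bound $\mathrm{Ch}(X)<\frac{\mathrm{Area}(\partial\Omega)}{\mathrm{Vol}(\Omega)}$ is already an instance of Lemma~\ref{lem2.2}(2) applied to $t=\mathrm{Vol}(\Omega)$, since for an isoperimetric domain of volume $t$ the ratio $\frac{\mathrm{Area}(\partial\Omega)}{\mathrm{Vol}(\Omega)}$ is exactly $\frac{I(t)}{t}$. Thus the real content is the upper bound, which is equivalent to proving
\[
\lim_{t\to\infty}\frac{I(t)}{t}\,=\,\mathrm{Ch}(X).
\]

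My plan is to run a subadditivity-plus-monotonicity argument. First, using the definition of $\mathrm{Ch}(X)$, select a smooth compact domain $\mathcal{D}_0\subset X$ with
\[
\frac{A_0}{V_0}\,<\,\mathrm{Ch}(X)+\frac{\ve}{2},
\]
where $V_0=\mathrm{Vol}(\mathcal{D}_0)$ and $A_0=\mathrm{Area}(\partial\mathcal{D}_0)$. Because $X$ is non-compact and homogeneous, for each $n\in\N$ we can place $n$ pairwise disjoint isometric copies of $\mathcal{D}_0$ inside $X$; this smooth compact competitor has volume $nV_0$ and boundary area $nA_0$, so
\[
I(nV_0)\,\leq\,nA_0.
\]
To promote this discrete bound to all sufficiently large $t$, I invoke the monotonicity of $I$ supplied by Lemma~\ref{lem2.1}. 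For $t\geq V_0$, set $n=\lfloor t/V_0\rfloor$, so that $nV_0\leq t<(n+1)V_0$; then
\[
\frac{I(t)}{t}\;\leq\;\frac{I((n+1)V_0)}{nV_0}\;\leq\;\frac{(n+1)A_0}{nV_0}\;=\;\frac{n+1}{n}\cdot\frac{A_0}{V_0}.
\]

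Finally, choose $N\in\N$ large enough that $\frac{N+1}{N}\cdot\frac{A_0}{V_0}<\mathrm{Ch}(X)+\ve$, and define $V_\ve:=NV_0$; the two-sided estimate~(\ref{eq:lem2.3}) then holds for every isoperimetric domain of volume greater than $V_\ve$. No step in this scheme is genuinely delicate, but the one point worth flagging is that the disjoint-translates construction by itself only controls $I(t)/t$ along the arithmetic subsequence $t=nV_0$; it is precisely the monotonicity of $I$ from Lemma~\ref{lem2.1} that allows one to interpolate this bound to arbitrary sufficiently large volumes. (In the degenerate case $X=\esf^2(\kappa)\times\R$, where $\mathrm{Ch}(X)=0$ and $I$ is eventually constant, the same scheme still works since $\mathcal{D}_0$ can be taken to be a long cylindrical product with $A_0/V_0$ arbitrarily small.)
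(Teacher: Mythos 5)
Your proof is correct and follows essentially the same path as the paper's: both pick a near-optimal competitor $\mathcal{D}_0$, use disjoint translated copies (the paper routes this through Lemma~\ref{lem2.2}(1) and $I(V_0)\leq A_0$, you pack copies directly) to bound $I$ at multiples of $V_0$, and then invoke the monotonicity of $I$ from Lemma~\ref{lem2.1} to interpolate to arbitrary large volumes. The paper packages the interpolation via a piecewise-constant majorant $F$ while you use a floor-function estimate, but this is a purely cosmetic difference.
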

\begin{proof}
The first inequality in (\ref{eq:lem2.3}) holds for every
isoperimetric domain by item~(2) of Lemma~\ref{lem2.2}.

Given $\ve >0$, consider a compact domain $\cD _0\subset X$ with volume $V_0$, such that
$\mathrm{Ch}(X)<\frac{\mathrm{Area}(\partial \cD _0)} {V_0}
<\mathrm{Ch}(X)+\frac{\ve }{2}$. Such domain $\cD_0$ exists by definition of
Ch$(X)$ and by item~(2) of Lemma~\ref{lem2.2}. Observe that $I(V_0)\leq \mathrm{Area}(\partial \cD _0)
<V_0(\mathrm{Ch}(X)+\frac{\ve }{2})$. Consider the piecewise constant
function $F\colon (0,\infty )\to \R $ given by
\[
F(t)=(k+1)V_0(\mbox{Ch}(X)+\textstyle{\frac{\ve }{2}}) \mbox{ if }t\in (kV_0,(k+1)V_0], \mbox{ for any $k\in \N$.}
\]
Fix $k\in \N$. By item (1) of Lemma~\ref{lem2.2}, $I((k+1)V_0)<(k+1)I(V_0)<F((k+1)V_0)$.
By Lemma~\ref{lem2.1} $I$ is non-decreasing, and so, $I\leq F$ in $(kV_0,(k+1)V_0]$ for every $k$; thus,
$I\leq F$ in $(0,\infty )$.

On the other hand, a straightforward computation shows that the function $F$ lies below the linear function
$t>0\mapsto (\mbox{Ch}(X)+\ve )t$ for $t\geq 2+\frac{2}{\ve }\mbox{Ch}(X)$. Finally,
$\frac{I(t)}{t}\leq \frac{F(t)}{t}\leq \mbox{Ch}(X)+\ve $ for $t\geq 2+\frac{2}{\ve }\mbox{Ch}(X)$
and the proof is complete.
%
\end{proof}

\begin{lemma}
\label{lem2.4} For each $n\in\N$, there exists $T_n>n$ such that for
every isoperimetric domain $\Omega _n\subset X$ with volume $T_n$,
the mean curvature $H_n$ of its boundary satisfies
$H_n<\frac{1}{2}\mathrm{Ch(X)}+\frac{1}{n}$. In particular,
$2H(X)\leq \mathrm{Ch}(X)$.
\end{lemma}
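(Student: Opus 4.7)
The plan is to exhibit a large volume $T_n>n$ at which $I$ is differentiable with $I'(T_n)<\mathrm{Ch}(X)+\frac{1}{n}$; by Lemma~\ref{lem2.1}, this forces every isoperimetric surface of volume $T_n$ to have mean curvature strictly smaller than $\frac{1}{2}\mathrm{Ch}(X)+\frac{1}{n}$, which is what the statement requires.

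Given $n\in\N$, I would apply Lemma~\ref{lem2.3} with $\ve=\frac{1}{2n}$ to select $t_0>n$ satisfying $I(t_0)/t_0<\mathrm{Ch}(X)+\frac{1}{2n}$. By the strict subadditivity provided by Lemma~\ref{lem2.2}(1), $I(2t_0)<2I(t_0)$, and integrating $I'$ yields
\[
\int_{t_0}^{2t_0}I'(t)\,dt \;=\; I(2t_0)-I(t_0) \;<\; I(t_0) \;<\; t_0\Bigl(\mathrm{Ch}(X)+\tfrac{1}{2n}\Bigr).
\]
Dividing by $t_0$, the average value of $I'$ on $[t_0,2t_0]$ is strictly less than $\mathrm{Ch}(X)+\frac{1}{2n}$. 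If the bound $I'(t)\geq\mathrm{Ch}(X)+\frac{1}{n}$ held almost everywhere on this interval, then the average would be at least $\mathrm{Ch}(X)+\frac{1}{n}$, a contradiction. Therefore the set of differentiability points $t\in[t_0,2t_0]$ with $I'(t)<\mathrm{Ch}(X)+\frac{1}{n}$ has positive measure; pick any such point as $T_n$, so that automatically $T_n\geq t_0>n$.

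At this $T_n$ one has $I'_+(T_n)=I'_-(T_n)=I'(T_n)$, so Lemma~\ref{lem2.1} gives that the mean curvature of the boundary of every isoperimetric domain of volume $T_n$ equals
\[
H_n=\tfrac{1}{2}I'(T_n)<\tfrac{1}{2}\mathrm{Ch}(X)+\tfrac{1}{2n}<\tfrac{1}{2}\mathrm{Ch}(X)+\tfrac{1}{n}.
\]
For the ``in particular'' assertion, choose any isoperimetric domain $\Omega_n$ of volume $T_n$; its boundary is a compact, embedded, orientable surface of constant mean curvature $H_n$, so it belongs to the class $\mathcal{A}$. Consequently $H(X)\leq H_n<\frac{1}{2}\mathrm{Ch}(X)+\frac{1}{n}$ for every $n$, and letting $n\to\infty$ yields $2H(X)\leq\mathrm{Ch}(X)$. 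The only mildly delicate step is the integral/pigeonhole argument that converts the strict subadditivity of $I$ into a differentiability point $T_n$ with small $I'(T_n)$; the remaining ingredients follow directly from Lemmas~\ref{lem2.1}--\ref{lem2.3}.
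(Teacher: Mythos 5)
Your proof is correct, and it takes a genuinely different route to produce the threshold $T_n$ than the paper does. The paper argues by contradiction on the entire tail $(t_n,\infty)$: it assumes $I'_-(t)\geq\mathrm{Ch}(X)+\tfrac{2}{n}$ for all $t>t_n$, integrates this bound, combines it with the estimate $\tfrac{I(t)}{t}<\mathrm{Ch}(X)+\tfrac{1}{n}$ from Lemma~\ref{lem2.3}, and sends $t\to\infty$ to reach the absurdity $\tfrac{1}{n}\geq\tfrac{2}{n}$; it then applies property~(II) directly to $I'_-(T_n)$ to bound the mean curvature of every isoperimetric domain of volume $T_n$. You instead localize the argument to the single compact interval $[t_0,2t_0]$, invoke the strict subadditivity $I(2t_0)<2I(t_0)$ from Lemma~\ref{lem2.2}(1) to bound $\int_{t_0}^{2t_0}I'$, and run a finite-interval averaging (pigeonhole) argument to find a genuine differentiability point $T_n$ where $I'$ is small; Lemma~\ref{lem2.1} then pins down the mean curvature of every isoperimetric domain at volume $T_n$ exactly as $\tfrac12 I'(T_n)$. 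Your version trades the $t\to\infty$ limiting argument for an explicit bounded interval and yields the slightly stronger fact that, at $T_n$, all isoperimetric surfaces of that volume have identical mean curvature; the paper's version is marginally more economical in that it never needs to locate an actual differentiability point, relying instead on the one-sided derivative $I'_-$ and property~(II). Either strategy is clean, and the ``in particular'' step you give is the same trivial consequence the paper implicitly uses.
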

\begin{proof}
By Lemma~\ref{lem2.3}, given $n\in \N $ there exists $t_n>0$ such
that for every isoperimetric domain $\Omega \subset X$ with volume
greater than $t_n$, we have
\begin{equation}
\label{eq:me2.4a} \mbox{Ch}(X)<\frac{\mbox{Area}(\partial \Omega
)}{\mbox{Vol}(\Omega )} <\mbox{Ch}(X)+\frac{1}{n}.
\end{equation}
Clearly  we can assume $t_n>n$ without loss of generality.

 We claim that for each $n$ there exists $T_n\in (t_n,\infty ) $ such that the
left derivative $I'_-(t)$ of $I$ satisfies
$I'_-(T_n)<\mbox{Ch}(X)+\frac{2}{n}$. Arguing by contradiction,
suppose that $I'_-(t)\geq \mbox{Ch}(X)+\frac{2}{n}$ for all $t\in
(t_n,\infty )$. Then, given $t>t_n$ we have
\begin{equation}
\label{eq:me2.4b} I(t)-I(t_n)=\int _{t_n}^tI'(s)\, ds\geq \left[
\mbox{Ch}(X)+\frac{2}{n}\right] (t-t_n)
\end{equation}
Using (\ref{eq:me2.4a}) and (\ref{eq:me2.4b}), we have
\begin{equation}
\label{eq:me2.4c} \mbox{Ch}(X)+\frac{1}{n}>\frac{I(t)}{t}\geq \left[
\mbox{Ch}(X)+\frac{2}{n}\right] \left( 1-\frac{t_n}{t}\right)
+\frac{I(t_n)}{t}.
\end{equation}
Taking $t\to \infty $ (with $n$ fixed) in (\ref{eq:me2.4c}) and
simplifying we obtain $\frac{1}{n}\geq \frac{2}{n}$, which is
absurd. Hence, our claim holds.

We finish by proving that the statement of the lemma holds for the
value $T_n$ found in the last paragraph. Given an isoperimetric
domain $\Omega _n\subset X$ with volume $T_n$, we can apply property
(II) stated just before Remark~\ref{remark1} to the mean curvature $H_n$ of
the boundary $\partial \Omega _n$ and then apply the claim in the
last paragraph in order to get
\[
2H_n\leq I'_-(T_n)<\mbox{Ch}(X)+\frac{2}{n},
\]
which completes the proof of the lemma.
\end{proof}

\subsection{Classification of the ambient manifolds.}
\label{subsec2.2}
As $X$ is three-dimensional, simply connected and
homogeneous, then $X$ is isometric either to the Riemannian product
$\esf^2(\kappa)\times \R$ of a 2-sphere of constant curvature
$\kappa>0$ with the real line or to a {\it metric Lie group,} i.e., a
Lie group equipped with a left invariant metric (see e.g.,
Theorem~2.4 in~\cite{mpe11}). A special case of this second
possibility is that $X$ is isometric to a semidirect product
$\R^2\rtimes _A\R $ for some $2\times 2$ real matrix $A$ endowed
with its {\it canonical metric;} this means that the group structure is
given on $\R^2\times \R$ by the operation
\begin{equation}
\label{eq:operation}
 ({\bf p}_1,z_1)*({\bf p}_2,z_2)=({\bf p}_1+e^{z_1A}{\bf p}_2,z_1+z_2),
\end{equation}
where $e^{zA}$ is the usual exponentiation of matrices, and the canonical left
invariant metric on $\R^2\rtimes _A\R $ is the one that extends the
usual inner product in $\R^3=T_{e}(\R^2\rtimes _A\R )$ by left
translation with respect to the above group operation, where
$e=(0,0,0)$. Every
three-dimensional, simply connected non-unimodular Lie group lies in
this semidirect product case, as well as the unimodular groups
$\widetilde{\mbox{\rm E}}(2)$ (the universal cover of the group of
orientation-preserving rigid motions of the Euclidean plane),
Sol$_3$ (also known as $E(1,1)$, the group  of
orientation-preserving rigid motions of the Lorentz-Minkowski plane)
and Nil$_3$ (the Heisenberg group of nilpotent $3\times 3$ real
upper triangular matrices with entries 1 in the diagonal).

The classification of three-dimensional, simply connected Lie groups
ensures that except for the cases listed in the above paragraph, the
remaining Lie groups are $\mathrm{SU}(2)$ (the unitary group,
diffeomorphic to the three-sphere) and $\widetilde{\mbox{\rm SL}}(2,\R
)$ (the universal covering of the special linear group). For details,
see~\cite{mpe11}. Recall that we are assuming in this paper
that $X$ is non-compact, so in particular $X$ cannot be
isometric to a metric Lie group
isomorphic to $\su$.

We will use later the following result, which follows rather easily
from the work of Peyerimhoff and Samiou~\cite{pesa1}; see also
Theorem~3.32 in~\cite{mpe11} for a
self-contained proof.
\begin{proposition}
\label{propos2.5} Suppose that $X$ is isometric to a semidirect
product $\R^2\rtimes _A\R$ endowed with its canonical metric.  Then,
$\mathrm{Ch}(X)=\mathrm{ trace}(A)=2H(X)$. Furthermore, $X$ is
unimodular if and only if $\mathrm{Ch}(X)=0$.
\end{proposition}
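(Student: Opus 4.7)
The plan is to prove the chain $\mathrm{Ch}(X)=\mathrm{trace}(A)=2H(X)$ via an explicit CMC foliation and then extract the unimodularity characterization from the Lie bracket structure. The equality $\mathrm{Ch}(X)=\mathrm{trace}(A)$ is the deepest step, and I would simply invoke it from Peyerimhoff--Samiou~\cite{pesa1} (or equivalently Theorem~3.32 in~\cite{mpe11}), since a direct proof of this Cheeger identity goes beyond the elementary geometric input available here.

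From this, the bound $2H(X)\leq\mathrm{Ch}(X)=\mathrm{trace}(A)$ is free from Lemma~\ref{lem2.4}. For the reverse inequality $2H(X)\geq\mathrm{trace}(A)$, the key step is to exhibit a canonical foliation by CMC surfaces that serves as a global barrier. Writing the canonical metric in coordinates $(x,y,z)$ as $g=h_{ij}(z)\,dx^{i}\,dx^{j}+dz^{2}$ with $h(z)=e^{-zA^{T}}e^{-zA}$, a direct computation gives $\det h(z)=e^{-2z\,\mathrm{trace}(A)}$, whence each horizontal plane $P_{c}=\{z=c\}$ is a complete surface of constant mean curvature $H_{0}=\tfrac{1}{2}\mathrm{trace}(A)$ with respect to $\partial_{z}$. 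The family $\{P_{c}\}_{c\in\R}$ foliates $X$, so given any compact orientable immersed surface $\Sigma\in\mathcal{A}$ one slides $P_{c}$ from $c=+\infty$ toward $\Sigma$ until a first contact point $p$ is reached; the tangency principle for CMC surfaces then forces $\max_{\Sigma}|H_{\Sigma}|\geq H_{0}$. Taking the infimum over $\mathcal{A}$ yields $H(X)\geq H_{0}$, as desired.

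For the unimodularity statement, pick the standard basis $e_{1},e_{2},e_{3}$ of the Lie algebra $\mathfrak{g}$ of $\R^{2}\rtimes_{A}\R$, where $e_{1},e_{2}$ are tangent to the abelian $\R^{2}$-factor and $e_{3}$ to the $\R$-factor. The defining brackets are $[e_{3},e_{i}]=Ae_{i}$ for $i=1,2$ and $[e_{1},e_{2}]=0$, and a one-line matrix computation yields $\mathrm{trace}(\mathrm{ad}_{e_{1}})=\mathrm{trace}(\mathrm{ad}_{e_{2}})=0$ and $\mathrm{trace}(\mathrm{ad}_{e_{3}})=\mathrm{trace}(A)$. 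Since a connected Lie group is unimodular precisely when $\mathrm{trace}(\mathrm{ad}_{v})=0$ for every $v\in\mathfrak{g}$, we conclude that $X$ is unimodular iff $\mathrm{trace}(A)=0$, and by the preceding equalities this is iff $\mathrm{Ch}(X)=0$.

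The main obstacle in executing this plan is the orientation/sign bookkeeping: one should first reduce, by a suitable reflection or reparametrization of the $\R$-factor, to the case $\mathrm{trace}(A)\geq 0$ so that $H_{0}$ is meaningful as a nonnegative mean curvature, and one must verify that the tangency argument can be carried out from both sides of $\Sigma$ so that the inequality $\max_{\Sigma}|H_{\Sigma}|\geq H_{0}$ is valid without presupposing any orientation on $\Sigma$.
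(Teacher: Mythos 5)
Your plan decomposes the proposition the same way the paper does, because the paper gives no proof of Proposition~\ref{propos2.5} at all: it simply refers the reader to Peyerimhoff--Samiou~\cite{pesa1} and to Theorem~3.32 of~\cite{mpe11}, the same sources you invoke for $\mathrm{Ch}(X)=\mathrm{trace}(A)$. The pieces you then supply on top of that citation are correct and match the paper's own in-line data: from~\eqref{eq:12} one has $\nabla_{E_1}E_1=aE_3$ and $\nabla_{E_2}E_2=dE_3$, so each horizontal leaf $\R^2\rtimes_A\{z\}$ has mean curvature $\tfrac12\mathrm{trace}(A)$ with respect to $E_3$; the brackets after~\eqref{eq:6} give $\mathrm{ad}_{E_3}\!\mid_{\mathrm{span}(E_1,E_2)}=A$ and $\mathrm{trace}(\mathrm{ad}_{E_1})=\mathrm{trace}(\mathrm{ad}_{E_2})=0$, so unimodularity is exactly $\mathrm{trace}(A)=0$; and $2H(X)\le\mathrm{Ch}(X)$ does come for free from Lemma~\ref{lem2.4}, which is independent of this proposition.

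The one point you flag but do not actually settle is the slide direction, and as written it is the wrong one. After normalizing $\mathrm{trace}(A)\ge 0$, the mean curvature vector of $P_c$ with respect to $E_3=\partial_z$ is $\tfrac12\mathrm{trace}(A)\,\partial_z$, so the mean-convex side of $P_c$ is $\{z>c\}$. To obtain $\max_\Sigma|H_\Sigma|\ge \tfrac12\mathrm{trace}(A)$ you must therefore bring $P_c$ up from $c=-\infty$, i.e.\ use the tangency where $\Sigma$ sits on the mean-convex side of $P_c$; there the interior maximum principle forces $|H_\Sigma(p)|\ge \tfrac12\mathrm{trace}(A)$ regardless of how $\Sigma$ is oriented. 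Sliding from $c=+\infty$, as you wrote, produces the opposite contact, where the comparison only gives $H_\Sigma(p)\le \tfrac12\mathrm{trace}(A)$ with respect to $\partial_z$, which yields no lower bound on $|H_\Sigma|$. This "mean-convex side'' formulation is precisely the one the paper uses later (Section~\ref{sec6}, Case~A) when it applies the analogous barrier argument.
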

In particular, the equality ${\rm Ch} (X)=2H(X)$ in item~(2) of Theorem~\ref{t1} holds
when X is isometric to a semidirect product with its canonical
metric.

\section{Isoperimetric domains when $\mathrm{Ch}(X)=0$.}
\label{secCh0}
\begin{theorem}
\label{t2} Let $X$ be a non-compact, simply connected, homogeneous
three-manifold and let $\{ \Omega_n\} _n$ be any sequence of isoperimetric
domains in $X$ with volumes tending to infinity. Then, the following
statements are equivalent:
\begin{enumerate}[(A)]
\item $\mathrm{Ch}(X)=0$.
\item $X$ is isometric either to the Riemannian product $\esf^2(\kappa)\times \R$ of a
2-sphere of constant curvature $\kappa>0$ with the real line, or to
a semidirect product $\R^2\rtimes _A\R$ for some $2\times 2$ real
matrix $A$ with trace zero,
endowed with its canonical metric. \vspace{.1cm}
\item  ${\displaystyle \lim_{n\to \infty}
\frac{\mathrm{Area}(\partial \Omega _n)}{\mathrm{Vol}(\Omega
_n)}=0}$.
\item The mean  curvatures $H_n$
of $\partial \Omega_n$  are non-negative and satisfy $\lim_{n\to \infty} H_n=0$.
\vspace{.1cm}
\item
\label{E} Given any $R>0$, $\ds  \lim_{n\to \infty}
\frac{\mathrm{Vol}(\Omega _n (R))}{\mathrm{Vol}(\Omega
_n)}=0$, where $\Omega _n(R)=\{x\in \Omega_n \mid \mbox{\rm dist}_X(x,\partial
\Omega_n)<R\}$. \een

\vspace{.25cm} \noindent Furthermore, if any of the above conditions
hold, then the sequence of radii of the $\Omega _n$ tends to
infinity as $n\to \infty $.
\end{theorem}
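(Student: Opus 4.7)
The plan is to prove the equivalences $(A)\Leftrightarrow(C)\Leftrightarrow(E)$,
$(A)\Leftrightarrow(B)$, and $(A)\Leftrightarrow(D)$ separately, and then deduce the
radius assertion from $(E)$. The equivalence $(A)\Leftrightarrow (C)$ is immediate from
Lemma~\ref{lem2.3} together with the nonnegativity of $\mathrm{Ch}(X)$. For $(C)\Leftrightarrow(E)$ I
combine the uniform curvature estimates for isoperimetric surfaces recalled in
Section~\ref{secprel} with the tube formula: for fixed $R>0$ one obtains
$\mathrm{Vol}(\Omega_n(R))\le C(R)\,\mathrm{Area}(\partial\Omega_n)$ with $C(R)$ independent of $n$,
giving $(C)\Rightarrow(E)$; and for small $R$ the coarea formula produces the reverse bound
$\mathrm{Vol}(\Omega_n(R))\ge cR\,\mathrm{Area}(\partial\Omega_n)$, yielding $(E)\Rightarrow (C)$.
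The radius assertion follows at once: if along a subsequence the radii were bounded by some $R_0$,
then $\Omega_n=\Omega_n(R_0+1)$, forcing the ratio in $(E)$ to equal $1$, a contradiction.

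For $(A)\Leftrightarrow(B)$, the directions $(B)\Rightarrow (A)$ are direct:
for $X=\S^2(\kappa)\times\R$, the slabs $\S^2(\kappa)\times[-R,R]$ have
$\mathrm{Area}/\mathrm{Vol}=1/R\to 0$; for $X=\R^2\rtimes_A\R$ with its canonical metric and
$\mathrm{trace}(A)=0$, Proposition~\ref{propos2.5} gives $\mathrm{Ch}(X)=0$.
Conversely, given $(A)$ and $X\ncong\S^2(\kappa)\times\R$, the classification in
Section~\ref{subsec2.2} reduces the possibilities to metric Lie groups; Proposition~\ref{propos2.5}
disposes of the semidirect-product cases, forcing $\mathrm{trace}(A)=0$, while the remaining candidate
$\wt{\mathrm{SL}}(2,\R)$ is excluded using that any left-invariant metric on it has exponential volume
growth, which forces $\mathrm{Ch}>0$.

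For $(A)\Leftrightarrow(D)$, the direction $(D)\Rightarrow(A)$ uses Lemma~\ref{lem2.1}: at each $t$,
$\tfrac12 I'_+(t)$ is an achieved infimum of mean curvatures of isoperimetric surfaces of volume $t$,
so applying $(D)$ to the corresponding minimizing isoperimetric domains gives $I'_+(t)\to 0$ as
$t\to\infty$. Combined with the uniform upper bound on $I'$ coming from curvature estimates and
the absolute continuity of $I$, a Ces\`aro averaging argument yields $I(t)/t\to 0$, i.e.\
$\mathrm{Ch}(X)=0$. For $(A)\Rightarrow(D)$, Lemma~\ref{lem2.4} first gives $2H(X)\le\mathrm{Ch}(X)=0$,
hence $H(X)=0$; if some subsequence had $H_n\ge\delta>0$, I would translate $\Omega_n$ by ambient
isometries so that a boundary point maps to a fixed basepoint, and use the uniform curvature
estimates together with the radius divergence from $(E)$ to extract a subsequential limit
$\Sigma_\infty$ that is a complete properly embedded constant mean curvature $H_\infty\ge\delta$
surface in $X$. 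I would then exploit the explicit geometric structure of $X$ supplied by
$(B)$---either $\S^2(\kappa)\times\R$ or a unimodular semidirect product, where the family of
CMC surfaces is well-understood---to rule out such a $\Sigma_\infty$ and obtain a contradiction.

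The hardest step is this implication $(A)\Rightarrow(D)$: Lemma~\ref{lem2.4} yields smallness of
mean curvature only along a specific sequence of volumes, so extending the conclusion to every
isoperimetric sequence requires the compactness/limit argument sketched above, which in turn rests
on both the uniform second fundamental form estimates and the detailed structural information from
case $(B)$.
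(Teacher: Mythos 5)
Your decomposition runs roughly parallel to the paper, and the tube-formula estimate you propose for $(C)\Rightarrow(E)$ is a valid alternative to the paper's ball-packing argument in Step 5. However, there is a definitive error in your proof of $(A)\Rightarrow(B)$: you exclude $\wt{\mathrm{SL}}(2,\R)$ on the grounds that exponential volume growth forces $\mathrm{Ch}>0$. This is false. $\mathrm{Sol}_3$ is a trace-zero semidirect product $\R^2\rtimes_A\R$ (so $\mathrm{Ch}(\mathrm{Sol}_3)=\mathrm{trace}(A)=0$ by Proposition~\ref{propos2.5}) and yet, being solvable and not virtually nilpotent, it has exponential volume growth. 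Volume growth simply does not distinguish the two cases. The paper's Step~2 instead invokes Hoke's theorem: $\mathrm{Ch}(G)=0$ for a non-compact simply connected metric Lie group $G$ if and only if $G$ is unimodular and amenable. $\wt{\mathrm{SL}}(2,\R)$ is unimodular but not amenable; that, not volume growth, is the obstruction.

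There are also two softer but real gaps. In $(E)\Rightarrow(C)$ the lower bound $\mathrm{Vol}(\Omega_n(R))\geq cR\,\mathrm{Area}(\partial\Omega_n)$ presumes a uniform regular inward collar. A second-fundamental-form bound controls focal distance, but not the possibility that distant sheets of $\partial\Omega_n$ come close together inside $\Omega_n$; the needed regular-neighborhood theorem of Meeks--Tinaglia requires mean curvature bounded away from zero, and the paper therefore organizes $(E)\Rightarrow(A)$ as a case split, invoking Meeks--Tinaglia only when $(D)$ fails and $H_n\geq\delta$, while the complementary case is already covered by $(D)\Rightarrow(A)$. Similarly, your $(A)\Rightarrow(D)$ plan to extract a CMC limit $\Sigma_\infty$ with $H_\infty\geq\delta$ and rule it out "using the structure of $X$" is only sketched and is not obviously completable (e.g.\ $\S^2(\kappa)\times\R$ has plenty of complete CMC surfaces with $H>0$). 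The paper's Step~6 argument is sharper: with $(E)$ established the radii of the $\Omega_n$ diverge, so the isoperimetric domains of arbitrarily small mean curvature and large volume produced by Lemma~\ref{lem2.4} can be translated inside $\Omega_n$, and the mean curvature comparison principle directly contradicts $H_n\geq\delta$.
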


\begin{proof}[Proof of Theorem~\ref{t2}]
We first observe that if item~(E) in Theorem~\ref{t2} holds, then
the last property in the statement of the theorem
concerning the radii of the domains $\Omega_n$ also holds:
otherwise, after passing to a subsequence there exists $C>0$ such that for all $x\in \Omega _n$ and
for all $n\in \N$, $\mbox{\rm dist}_X(x,\parc\Omega _n)\leq C$. This implies that $\Omega
_n(R)=\Omega _n$ for all $R>C$, which contradicts the assumption in
item~(E).

We will prove the equivalence between the items in Theorem~\ref{t2}
in six steps.
 \par
\vspace{.2cm} {\bf Step 1:} {\it If $X$ is isometric to the Riemannian
product $\esf^2(\kappa )\times \R $ for some $\kappa
>0$, then items~(A), (B), (C), (D) and (E) in
Theorem~\ref{t2} hold. }
 \begin{proof}[Proof of Step 1.]
Consider the smooth domains $\Omega _R=\esf^2(\kappa)\times [0,R]$
with $R>0$. Then,
\begin{equation}
\label{eq:t2a}
 \frac{\mbox{Area}(\partial \Omega _R)}{\mbox{Vol}(\Omega _R)}=
\frac{2\mbox{ Area}(\esf^2(\kappa ))}{R\mbox{ Area}(\esf^2(\kappa
))} \to 0\quad \mbox{ as }R\to \infty , \end{equation} and thus,
Ch$(X)=0$ and item~(A) of Theorem~\ref{t2} holds. By item (iii) of Remark~\ref{remark1},
%
%
(\ref{eq:t2a}) gives that items~(C) and (E) of
Theorem~\ref{t2} hold. As the boundary of $\Omega _R$ is minimal for $R$ sufficiently large,
then item~(D) of the theorem also holds.  Item (B) holds by the assumption that $X$ is isometric to
$\esf^2(\kappa )\times \R $.
 \end{proof}
 \par
\vspace{.2cm} {\bf Step 2:} {\it Item~(A) and item~(B) in Theorem~\ref{t2}
are equivalent.}
 \begin{proof}[Proof of Step 2.]
A result by  Hoke~\cite{hoke1} states that Ch$(G) =
0$ for a non-compact, simply connected Lie group $G$ (of any
dimension) with a left invariant metric if and only if $G$ is
unimodular and amenable. Therefore, Step 2 follows
from  Step 1, Proposition~\ref{propos2.5} and this result
by Hoke, since $\sl $ is not amenable.
\end{proof}
 \par
\vspace{.2cm} {\bf Step 3:} {\it Items (A) and (C) in Theorem~\ref{t2}
are equivalent.}
 \begin{proof}[Proof of Step 3.]
This follows directly from
Lemma~\ref{lem2.3}.
\end{proof}
 \par
\vspace{.2cm} {\bf Step 4:} {\it Item~(D) in Theorem~\ref{t2}
implies item~(A). }
 \begin{proof}[Proof of Step 4.]
 Assume that item~(D) in Theorem~\ref{t2} holds.
Arguing by contradiction, suppose that Ch$(X)>0$. Choose $V_0>0$
sufficiently large so that for every isoperimetric domain $\Omega
\subset X$ with volume at least $V_0$, the mean curvature of
$\partial \Omega $ is at most Ch$(X)/4$ (this $V_0$ exists by our
hypothesis in item~(D)). Hence given $a>0$, {\small
\[
\frac{I(V_0+a)-I(V_0)}{V_0+a}=\frac{1}{V_0+a}\int
_{V_0}^{V_0+a}I'_+(t)\, dt
\]
\[
\stackrel{(\star )}{\leq }\frac{1}{V_0+a}\int
_{V_0}^{V_0+a}2 \frac{\mbox{Ch}(X)}{4}\, dt =\frac{a\,
\mbox{Ch}(X)}{2( V_0+a)} < \frac{\mbox{Ch}(X)}{2},
\] }
where we have used in $(\star )$ our hypothesis on the mean
curvature of isoperimetric domains with volume at least $V_0$ and
property~(II) stated just before Remark~\ref{remark1}.
Since $\displaystyle \frac{I(V_0+a)}{V_0+a}>\mbox{Ch}(X)$ by  item~(2)
of Lemma~\ref{lem2.2}, the  inequalities
 \[
\mbox{Ch}(X)-\frac{I(V_0)}{V_0+a} <\frac{I(V_0+a)-I(V_0)}{V_0+a}
< \frac{\mbox{Ch}(X)}{2}.
\]
cannot hold for $a$ sufficiently large under our assumption that
$\mbox{Ch}(X)>0 $.
 This contradiction proves  $\mbox{Ch}(X)=0 $, which completes the proof of this step.
\end{proof}

\vspace{.2cm} {\bf Step 5:} {\it Item~(E) and item~(A) in
Theorem~\ref{t2} are equivalent. }
 \begin{proof}[Proof of Step 5.]
We first check that item~(E) implies item~(A), so assume that item~(E) holds.

As the volumes of the
isoperimetric domains $\Omega _n$ are larger  than some fixed $V>0$, then
their boundary surfaces $\partial \Omega _n$ have uniformly bounded
second
 fundamental form (and hence also uniformly bounded constant mean
 curvatures $H_n$). We consider two possible cases.

If item~(D) also holds, then by Step 4 we see that item~(A) holds.
Now assume that item~(D) fails to hold in general. Thus,
there is a sequence $\{\Omega'_n\}_n$ of isoperimetric domains with
volumes tending to infinity, such that the mean curvatures
$H_n'$ of $\parc \Omega'_n$ are bounded away from zero.
 Since the sectional curvature of $X$ is bounded from above,
Theorem~3.5 in Meeks and Tinaglia~\cite{mt3} then ensures the
existence of a positive number $\de $ such that $\partial \Omega' _n$
has a regular $\de$-neighborhood in $\Omega' _n$, i.e., the geodesic
segments in $\Omega' _n$ of length $\de$ and normal to $\partial
\Omega'_n$ at one of their end points are embedded segments and they do not intersect each other, for each
$n\in \N$. Since the surfaces $\partial \Omega'_n$ have uniformly bounded second
fundamental forms and the absolute sectional curvature of $X$ is bounded,
there is a constant $c>0$
such that $\mathrm{Vol}(\Omega'_n(\de))\geq c
\,\mathrm{Area}(\partial \Omega'_n)$, for all $n\in \N$. Thus,
\begin{equation}
\label{eq:step4a} \mbox{Ch}(X)\leq \frac{\mbox{Area}(\partial \Omega'
_n)}{\mbox{Vol}(\Omega' _n)}\leq
\frac{1}{c}\frac{\mbox{Vol}(\Omega' _n(\de))}{\mbox{Vol}(\Omega'
_n)}, \quad \mbox{ for all $n\in \N$.}
\end{equation}
As we are assuming that item~(E) in Theorem~\ref{t2} holds, then the
right-hand-side of~(\ref{eq:step4a}) tends to zero as $n\to \infty
$. Thus, Ch$(X)=0$ and we conclude that item~(E) implies item~(A).

We next check that item~(A) implies item~(E). Fix $R>0$ and note
that given $n\in \N$ and $p\in \Omega _n(R)$, the closed metric ball in $X$
\[
\overline{\B }(p,R)=\{ x\in X \ | \ \mbox{\rm dist}_X(x,p)\leq R\}
\]
intersects the boundary $\partial \Omega _n$, where $d$ denotes the
distance function in $X$ associated to its Riemannian metric. For
$n\in \N$ fixed, consider the set $\mathcal{A}_n$ whose elements are
the pairwise disjoint collections $\{ \overline{\B }(p_i,2R)\ | \ i=1,\ldots
,k\} $ for some collection of points $p_1,\ldots ,p_k\in \Omega _n(R)$ for some $k\in \N$. Note
that $\mathcal{A}_n$ is non-empty and it can be endowed with the
partial order given by inclusion. Since $\ov{\Omega _n(R)}$ is
compact, there exists a maximal element in $\mathcal{A}_n$ for this
partial order. In other words, for any fixed $n\in \N$,  there exists a finite set $\{
p_1,\ldots ,p_{k(n)}\} \subset \Omega _n(R)$ such that
$\mathcal{C}_n= \{ \overline{\B}(p_1,2R),\ldots ,\overline{\B }(p_{k(n)},2R)\} $ is a
maximal collection of pairwise disjoint closed balls with centers in
$\Omega _n(R)$ and fixed radius $2R$.
 This maximality implies that if $x\in \Omega _n(R)$, then there
 exists some $i\in\{1,\ldots ,k(n)\}$ such that
 $\overline{\B }(x,2R)\cap \overline{\B }(p_i,2R)\neq \mbox{\O }$.
Consequently, the triangle inequality gives that the collection
\[
\mathcal{C}_n'=\{ \overline{\B}(p_1,4R),\ldots ,\overline{\B }(p_{k(n)},4R)\}
\]
is a covering of $\Omega _n(R)$, and thus,
\begin{eqnarray}
\label{eq:aa}
\hspace{1cm}   \mbox{Vol}(\Omega _n(R)) & \leq &
{\displaystyle \mathrm{Vol}\left( \bigcup _{i=1}^{k(n)}\overline{\B }(p_i,4R)\right) }
\\
& \leq &
{\displaystyle \sum_{i=1}^{k(n)} \mathrm{Vol}\left( \overline{\B }(p_i,4R)\right) =
  k(n)\mathrm{ Volume}\left( \overline{\B }(p_1,4R)\right) .}
\end{eqnarray}

As the balls in $\mathcal{C}_n$ are pairwise disjoint, we have
\begin{equation}
\label{eq:ab} \mathrm{Area}(\partial \Omega _n) \geq
 \sum _{i=1}^{k(n)}\mathrm{Area}\left[ \overline{\B }(p_i,2R)\cap \partial \Omega _n\right] .
\end{equation}
Since the norm of the second fundamental form of
$\partial \Omega _n$ is uniformly bounded (independently of $n$),
then there exists some $\tau\in(0,R)$ such that
for all $p\in \parc \Omega_n$, the exponential map on the disk of
radius $\tau$ in $T_p \parc\Omega_n$ is a quasi-isometry\footnote{Recall
that a {\it quasi-isometry} $f\colon (X,g)\to (Y,g')$ between Riemannian
manifolds is a diffeomorphism satisfying $C\, g\leq f^*g'\leq \frac{1}{C}g$ in $X$
for some $C>0$.},
with constant depending only on $X$ and the bound of the second
fundamental form (and not depending on $n$).
Then, since for each $i=1,
\ldots ,k(n)$, $\overline{\B }(p_i,R)$ intersects $\partial \Omega _n$ at some point
$q_i$, then the intrinsic $\tau $-disk centered at $q_i$ has area not less that
some number $\mu >0$ not depending on $n$ or $q_i$. Since the intrinsic distance dominates
the extrinsic distance, then the triangle inequality implies that
\begin{equation}
\label{eq:ac} \mathrm{Area}\left[ \overline{\B }(p_i,2R)\cap \partial \Omega
 _n\right] \geq \mu , \quad \mbox{for all $i=1,\ldots ,k(n)$ and $n\in \N$.}
\end{equation}

Now, (\ref{eq:aa}), (\ref{eq:ab}) and (\ref{eq:ac}) give for all $n\in \N$, the following inequalities:
\begin{equation}
\label{eq:3.5} \frac{\mbox{Area}(\partial \Omega
_n)}{\mbox{Vol}(\Omega _n(R))}\geq \frac{
 \sum _{i=1}^{k(n)}\mathrm{Area}\left[ \overline{\B }(p_i,2R)\cap \partial \Omega _n\right]}{k(n)\mathrm{
Volume}\left( \overline{\B }(p_1,4R)\right)} \geq \frac{\mu } {\mathrm{
Volume}\left( \overline{\B }(p_1,4R)\right) }.
\end{equation}

Assume now that item~(A) holds, and so item~(C) also holds by Step 3, i.e.,
 \[
 \lim_{n\to \infty}\frac{\mbox{Area}(\partial
\Omega _n)}{\mbox{Vol}(\Omega _n)}= 0.
\] So the only way  (\ref{eq:3.5}) can hold is that $\ds  \lim_{n\to \infty}$ {\small $\ds
\frac{\mathrm{Vol}(\Omega _n (R))}{\mathrm{Vol}(\Omega
_n)}=0$}, which finishes the proof of Step 5.
\end{proof}
 \par
\vspace{.2cm} {\bf Step 6:} {\it Item~(C) in Theorem~\ref{t2}
implies item~(D).}
 \begin{proof}[Proof of Step 6.]
Assume that item~(C) holds. Arguing by contradiction, suppose that item~(D) of Theorem~\ref{t2} fails to
hold.  Then Step~1
implies that $X$ is not isometric to $\esf^2(\kappa )\times \R $,
and so, by Remark~\ref{remark1}, $X$ is diffeomorphic to $\R^3$ and the boundaries of
isoperimetric domains are connected with positive mean curvature. Therefore, the failure of item~(D) to hold
implies that
there exists a sequence of isoperimetric domains $\Omega_n$
with volumes tending to infinity for which
the mean curvatures
 $H_n$ of $\partial \Omega_n$ satisfy
$H_n\geq \be $ for some number $\be >0$. As item~(C) holds, then
item~(A) also
holds by Step 3 and so Ch$(X)=0$. By Step 5 we see that item~(E)
also holds, and thus the radii of the $\Omega _n$ diverge to infinity as
$n\to \infty $ by the argument just before the statement of Step~1.

Since Ch$(X)=0,$  Lemma~\ref{lem2.4} implies that there exists a
sequence $\{ T_k\} _k\subset (0,\infty )$ diverging to infinity such
that given a sequence $\{ \Omega _k^1\} _k$ of isoperimetric domains
in $X$ with Volume$(\Omega _k^1)= T_k$, then the mean curvature
$H^1_k$ of the boundary of $\Omega _k^1$ satisfies $H_k^1<1/k$, for
all $k\in \N$. Since each $\Omega _k^1$ is compact and the radii of
the $\Omega _n$ diverge to infinity as $n\to \infty $, then given
$k\in \N$, there exists $n(k)\in \N$ such that, after an ambient
isometry of $X$ applied to $\Omega_{n(k)}$, we have $\Omega _k^1\subset \Omega
_{n(k)}$. As $X$ is homogeneous, a simple application of the mean curvature
comparison principle implies that $H_{n(k)}\leq H_k^1 $ (move isometrically
$\Omega _k^1$ inside $\Omega _{n(k)}$ until the first time that their
boundaries touch). This is a contradiction, since $\be \leq
H_{n(k)}\leq H_k^1<1/k$ for all $k$. This contradiction finishes the
proof of Step~6.
\end{proof}
 \par

Finally, note that Steps 1-6 above complete the proof of
Theorem~\ref{t2}.
\end{proof}

\section{Simply connected homogeneous three-manifolds with $\mathrm{Ch}(X)>0$.}
\label{secCh+}

By the results in Section~\ref{subsec2.2} and by Theorem~\ref{t2},
the condition Ch$(X)>0$ is equivalent to the fact that $X$ is
isometric to a metric Lie group which is either a
non-unimodular semidirect product $\R^2\rtimes _A\R $ endowed with its canonical metric,
or
$\sl $ with a left invariant metric. Thus, {\it from this point on we will assume that $X$ is
identified with the related metric Lie group.} In particular given
$a\in X$, the left translation $l_a\colon  X\to X$ defined by
$l_a(x)=a\, x$ (we will omit the group operation in $X$) is an
isometry. Note that given a right invariant vector field $K$ on $X$,
its associated  1-parameter group of diffeomorphisms is the
1-parameter group of isometries $\{ l_a\ | \ a\in \G \} $, where $\G
\subset X$ is the 1-parameter subgroup of $X$ defined by $\G(0)=e$,
$\G '(0)=K(e)$ (in the sequel, $e$ will denote the identity element of $X$). Therefore,
$K$ is a Killing vector field.

We next recall some properties of such a metric Lie group with Ch$(X)>0$, that will
be useful in later discussions. For detailed proofs of the
properties stated below, see~\cite{mpe11}.
\par
\vspace{.2cm} {\bf Case (A): $X$ is a non-unimodular semidirect
product.}
\newline
Assume that $X=\R^2\rtimes _A\R $ where
\[
A=\left(
\begin{array}{cc}
a & b \\
c & d\end{array}\right) ,
\]
equipped with its canonical left invariant metric.
An orthonormal left invariant frame for
the canonical left invariant metric on $X$ is
\begin{equation}
\label{eq:6}
 E_1(x,y,z)=a_{11}(z)\partial _x+a_{21}(z)\partial _y,\quad
E_2(x,y,z)=a_{12}(z)\partial _x+a_{22}(z)\partial _y,\quad
 E_3=\partial _z,
\end{equation}
where $e^{zA}=\left( a_{ij}(z) \right) _{i,j}=1,2$ and $\partial _x=\frac{\partial }{\partial x},
\partial _y,\partial _z$ is the usual parallelization of $\R^3$.
The Lie bracket is given by
\[
[E_1,E_2]=0,\quad [E_3,E_1]=aE_1+cE_2,\quad [E_3,E_2]=bE_1+dE_2.
\]

In the natural coordinates $(x,y,z)\in \R^2\rtimes _A\R $, the
canonical left invariant metric is given by
\begin{equation}
\label{eq:13}
 \left.
\begin{array}{rcl}
\langle ,\rangle & =&
 \left[ a_{11}(-z)^2+a_{21}(-z)^2\right] dx^2+
\left[ a_{12}(-z)^2+a_{22}(-z)^2\right] dy^2 +dz^2 \\
& + & \rule{0cm}{.5cm} \left[
a_{11}(-z)a_{12}(-z)+a_{21}(-z)a_{22}(-z)\right] \left( dx\otimes
dy+dy\otimes dx\right) ,
\end{array}
\right.
\end{equation}
and the Levi-Civita connection $\nabla $ for the canonical left
invariant metric is given by
\begin{equation}
\label{eq:12}
\begin{array}{l|l|l}
\rule{0cm}{.5cm}
\nabla _{E_1}E_1=a\, E_3 & \nabla _{E_1}E_2=\frac{b+c}{2}\, E_3 & \nabla _{E_1}E_3=-a\, E_1-\frac{b+c}{2}\, E_2 \\
\rule{0cm}{.5cm}
\nabla _{E_2}E_1=\frac{b+c}{2}\, E_3 & \nabla _{E_2}E_2=d\, E_3 & \nabla _{E_2}E_3=-\frac{b+c}{2}\, E_1-d\, E_2 \\
\rule{0cm}{.5cm} \nabla _{E_3}E_1=\frac{c-b}{2}\, E_2 & \nabla
_{E_3}E_2=\frac{b-c}{2}\, E_1 & \nabla _{E_3}E_3=0.
\end{array}
\end{equation}

In particular (\ref{eq:12}) implies that the mean curvature of each
leaf of the foliation $\mathcal{F}= \{ \R^2\rtimes _A\{ z\} \mid
z\in \R \}$ with respect to the unit normal vector field $E_3$ is
the constant $H=\mbox{trace}(A)/2$, which equals $H(X)=\frac12\mathrm{Ch}(X)$ by
Proposition~\ref{propos2.5}.

\par
\vspace{.2cm} {\bf Case (B): $X$ is $\sl $ equipped with a left
invariant metric.}
\newline
The $2\times 2$ real matrices with determinant equal to 1 form the
 {\it special linear group} SL$(2,\R )$, and represent the orientation-preserving
linear transformations of $\R^2$ that preserve the oriented area.
The quotient of SL$(2,\R )$ modulo $\{ \pm \mbox{identity}\} $ is
the {\it projective special linear group} PSL$(2,\R )$, isomorphic
to the group of orientation-preserving isometries of the hyperbolic
plane $\Hip ^2$, and naturally diffeomorphic to the unit tangent bundle of
$\Hip ^2$. The fundamental groups of SL$(2,\R )$, PSL$(2,\R )$ are
infinite cyclic, and the universal cover of both groups is the
simply connected unimodular Lie group $\sl $, which is the
underlying algebraic structure of the homogeneous manifold $X$ in
this case~(B).

PSL$(2,\R )$ admits three types of 1-parameter subgroups, namely
elliptic, parabolic and hyperbolic subgroups, that correspond to
1-parameter subgroups of M\"{o}bius transformations of the
Poincar\'e disk with zero, one or two fixed points at the boundary
at infinity $\partial _{\infty }\Hip ^2= \esf^1$, respectively. The Lie algebra of
$\sl $ (and of SL$(2,\R )$, PSL$(2,\R )$) is the linear space
$\mathfrak{sl}(2,\R )$ of $2\times 2$ real matrices with trace zero,
with the Lie bracket given by the commutator of matrices. The basis
of $\mathfrak{sl}(2,\R )$ described by the matrices
\begin{equation}
  \label{eq:[]PSL2R}
E_1=\left(
\begin{array}{rr}
1 & 0 \\
0 & -1\end{array}\right) ,\quad E_2=\left( \begin{array}{cc}
0 & 1 \\
1 & 0\end{array}\right) ,\quad E_3=\left(
\begin{array}{rr}
0 & -1 \\
1 & 0\end{array}\right)
\end{equation}
satisfies the Lie brackets relations
\[
[E_1,E_2]=-2E_3,\quad [E_2,E_3]=2E_1,\quad [E_3,E_1]=2E_2.
\]

The two-dimensional subgroups of PSL$(2,\R )$ are non-commutative
and form an $\esf^1$-family $\{ \Hip ^2_{{\mu} }\ | \ {\mu} \in \esf^1=\partial
_{\infty }\Hip ^2\} $, where
\begin{equation}
\label{eq:Htheta}
 \Hip ^2_{{\mu} }=\{ \mbox{orientation-preserving isometries of $\Hip ^2$
 which fix }{\mu \in \partial_{\infty }\Hip ^2} \} .
\end{equation}
Elements in $\Hip^2_{{\mu} }$ are rotations around ${\mu} $ (parabolic)
and translations along geodesics one of whose end points is ${\mu} $
(hyperbolic). The one-dimensional and two-dimensional subgroups of $\sl $ are
the lifts via the covering map $\sl \to \mbox{PSL}(2,\R )$ of the
corresponding one-dimensional and two-dimensional subgroups of PSL$(2,\R )$,
and we will use accordingly the notation {\it elliptic, parabolic,
hyperbolic} and $\Hip ^2_{{\mu} }$ for these connected lifted subgroups of $\sl $.
Under its left action, every 1-parameter subgroup of $\sl $ generates a right invariant
Killing vector field on $\sl $, and we will also call  these right
invariant vector fields {\it elliptic, parabolic} and {\it
hyperbolic}, accordingly to the nature of the related 1-parameter
subgroups.

As mentioned earlier, there is a natural projection of $\sl$ to the hyperbolic
plane. More specifically, there is a submersion
\begin{equation}
\label{eq:Pi} \Pi\colon \sl \to \H^2,
\end{equation}
which is the composition of the covering map $\sl \to
\mbox{PSL}(2,\R )$ with the natural projection from $\psl$ to $\H^2$
obtained after identifying $\psl $ with the unit tangent bundle of
$\Hip ^2$.

With respect to the choice of basis~(\ref{eq:[]PSL2R}) for the Lie
algebra $\mathfrak{sl}(2,\R )$, the center of $\sl $ is an infinite
cyclic subgroup contained in the integral curve $\G^E\subset \sl $
of the left invariant vector field $E_3$ in ~(\ref{eq:[]PSL2R}) that passes
through the identity element $e$ of $\sl$; the
image set of $\G^E$ is a 1-parameter elliptic subgroup of $\sl$.

We next fix some notation that we will use in the remainder of this manuscript.
 Let $\G^H$, $\G^E$, $\G^P$ be
the 1-parameter subgroups of $\sl $ given by
\begin{equation}
\label{eq:GHEP}
(\G ^H)'(0)=E_2(e), \qquad (\G^E)'(0)=E_3(e), \qquad (\G^P)'(0)=E_1(e)+E_3(e),
\end{equation}
 where 
$E_1,E_2,E_3$ are given by~(\ref{eq:[]PSL2R}). Thus, $\G^H$ (resp.
$\G^E$, $\G^P$) is a hyperbolic (resp. elliptic, parabolic) 1-parameter subgroup of
$\sl $.
Let
$\t \in \partial _{\infty }\Hip ^2=\esf ^1$ be the end point of the parameterized
geodesic $\Pi (\G^H )$ of $\Hip ^2$ obtained by projecting $\G^H$
via the map $\Pi $ given in~(\ref{eq:Pi}), which is the end point of
$\Pi (\G^H ([0,\infty )))$ at infinity in $\HH^2$. Let $\H^2_{\t }$ be the
lift to $\sl $ of the two-dimensional subgroup of PSL$(2,\R )$ that
consists of the isometries of $\H^2$ that fix~$\t $. $\H ^2_{\t }$ contains
both 1-parameter subgroups $\G^P$ and $\G ^H$. Furthermore, $\G^P$ is
the unique parabolic subgroup and the unique normal 1-parameter subgroup of $\H ^2_{\t }$.
Left translations by elements in $\G^P$ (resp. in $\G^H$)
generate a right invariant parabolic vector field $K^P$
(resp. hyperbolic vector field $K^H$) on $\sl $.

\begin{definition}
\label{defhorocyl}
{\em
 Given a horocycle $\a\subset \HH^2$, we
call $\Pi ^{-1}(\a)\subset \sl$ the {\em horocylinder} in $X$ over $\a$.}
\end{definition}
\begin{figure}
\parbox[t]{.4\linewidth}{\includegraphics[height=7.2cm]{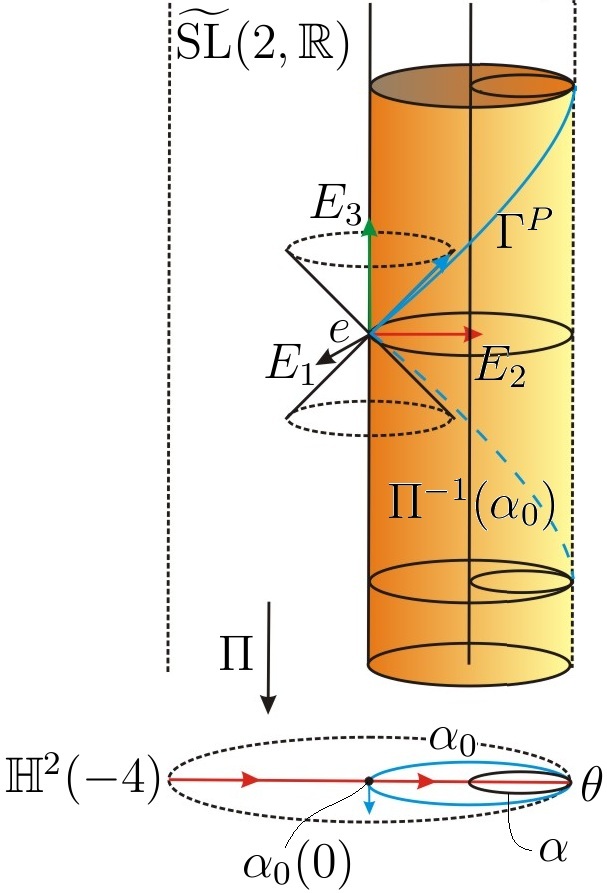}}%
\hspace{.1\linewidth}%
\parbox[b]{.5\linewidth}{%
The shaded surface in $\sl $ is a horocylinder, inverse image by the
projection $\Pi $ of a punctured circle $\a _0\subset \Hip ^2$
tangent at a point in the boundary at infinity of $\Hip ^2$. $\Pi
^{-1}(\a _0)$ is everywhere tangent to the parabolic right invariant
vector field $K^P$ on $\sl $ generated by the 1-parameter parabolic subgroup
$\G^P$.\vspace{1cm}} \caption{} \label{horocyl1}
\end{figure}

Let $\mathcal{C}$   be the set of horocycles in $\HH^2$
tangent to the $\t\in \partial_{\infty } \HH^2$ defined in the previous paragraph. For
each $\a\in \cC$, the horocylinder $\Pi ^{-1}(\a )\subset \sl$ is everywhere tangent to  the parabolic right
invariant vector field $K^P$ of $X$ and  $\Pi
_*[(K^P)(e)]=\a _0'(0)$, where $\a _0$ is the horocycle in
$\mathcal{C}$ that passes through $\Pi (e)\in \Hip^2$, parameterized
appropriately and so that $\a _0(0)=\Pi (e)$, see Figure~\ref{horocyl1}.

The family of left invariant metrics on $\sl$ is three-parametric.
Any such  left invariant metric can be constructed
by declaring that the left invariant vector fields $E_1,E_2,E_3$
in~(\ref{eq:[]PSL2R}) are orthogonal with corresponding lengths
being
 arbitrary positive numbers
$\l _1,\l _2$, $\l _3>0$, respectively.  Thus, we have:

\begin{proposition}
\label{metrics}
The space of left invariant metrics on $\sl$ can be naturally parameterized
by the  open set  $\cM=\{(\l_1,\l_2,\l_3)\in \rth \mid \l_i>0, i=1,2,3\}$,
whereby  we declare the ordered set $\{\l_1E_1,\l_2 E_2, \l_3 E_3 \}$ of
left invariant vector fields on $\sl$
to be orthonormal.
Henceforth, we will identify the space of left invariant metrics on $\sl$ with the set
$\cM$ under the above correspondence.
\end{proposition}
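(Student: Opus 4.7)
The plan is to show that every left invariant metric on $\sl$ is isometric, via a Lie group automorphism of $\sl$, to a metric in the explicit three-parameter family $\{g_{(\l_1,\l_2,\l_3)}\}_{(\l_1,\l_2,\l_3)\in\cM}$ in which $\{\l_1 E_1,\l_2 E_2,\l_3 E_3\}$ is declared orthonormal at the identity. This gives the claimed parameterization of left invariant metrics on $\sl$ (understood up to Riemannian isometry) by the open set $\cM\subset\R^3$.

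The starting point is Milnor's structure theorem for left invariant metrics on three-dimensional unimodular Lie groups. Given such a metric $g$ on $\sl$, I view it as an inner product on $\mathfrak{sl}(2,\R)$. Unimodularity implies that the linear endomorphism $L$ of $\mathfrak{sl}(2,\R)$ defined by $L(X\times Y)=[X,Y]$ (with the cross product built from $g$ and a fixed orientation) is self-adjoint, hence diagonalizable in a $g$-orthonormal basis $\{u_1,u_2,u_3\}$ satisfying $[u_2,u_3]=\eta_1 u_1$, $[u_3,u_1]=\eta_2 u_2$, and $[u_1,u_2]=\eta_3 u_3$ for real constants $\eta_i$. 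The Bianchi classification of three-dimensional unimodular Lie algebras then forces, up to a permutation of indices, $\eta_1,\eta_2>0$ and $\eta_3<0$, matching the sign pattern $(2,2,-2)$ exhibited by the standard basis $\{E_1,E_2,E_3\}$ for the inner product making it orthonormal.

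Next I look for positive scalars $\mu_1,\mu_2,\mu_3$ such that the linear map $\phi\colon\mathfrak{sl}(2,\R)\to\mathfrak{sl}(2,\R)$ sending $u_i\mapsto \mu_i E_i$ is a Lie algebra automorphism. Bracket preservation reduces to the three equations $2\mu_2\mu_3=\eta_1\mu_1$, $2\mu_1\mu_3=\eta_2\mu_2$, and $-2\mu_1\mu_2=\eta_3\mu_3$, which, under our sign constraints, admit the unique positive solution $\mu_1=\tfrac{1}{2}\sqrt{\eta_2|\eta_3|}$, $\mu_2=\tfrac{1}{2}\sqrt{\eta_1|\eta_3|}$, $\mu_3=\tfrac{1}{2}\sqrt{\eta_1\eta_2}$. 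Since $\sl$ is simply connected, $\phi$ integrates to a Lie group automorphism $\tilde\phi$ of $\sl$, and the pushforward $\tilde g:=(\tilde\phi^{-1})^{*}g$ is a left invariant metric isometric to $g$. At the identity, $\tilde g(E_i,E_j)=g(u_i/\mu_i,u_j/\mu_j)=\delta_{ij}/\mu_i^{2}$, so $\{\mu_1 E_1,\mu_2 E_2,\mu_3 E_3\}$ is $\tilde g$-orthonormal; setting $\l_i=\mu_i$ identifies $g$ with $g_{(\l_1,\l_2,\l_3)}$ up to isometry. This establishes surjectivity of the parameterization, while injectivity modulo the evident discrete symmetries (permutations coming from ordering ambiguities of the Milnor basis) follows from the same analysis.

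The most delicate step is the application of Milnor's theorem, specifically the verification that $L$ is self-adjoint (where unimodularity enters) and the identification, through the Bianchi classification, of the $\eta_i$ sign pattern as an invariant of the Lie algebra $\mathfrak{sl}(2,\R)$. Once these structural inputs are in place, the normalization via $\phi$ reduces to the explicit algebraic computation above, and the smoothness of the correspondence $(\l_1,\l_2,\l_3)\leftrightarrow g_{(\l_1,\l_2,\l_3)}$ is manifest from the closed-form expressions for $\mu_i$ in terms of the eigendata of $L$.
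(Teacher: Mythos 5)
Your proof is correct and follows the standard route that the paper implicitly relies on: Milnor's diagonalization theorem for left invariant metrics on three-dimensional unimodular Lie groups, combined with the fact that the sign pattern of the structure constants $(+,+,-)$ is a Lie-algebra invariant picking out $\mathfrak{sl}(2,\R)$, and the observation that a Lie algebra automorphism of a simply connected group integrates to a group automorphism that carries one left invariant metric isometrically to another. The paper supplies no explicit argument for Proposition~\ref{metrics} (it is stated as a consequence of the preceding discussion and the background in~\cite{mpe11}), and your write-up is precisely the omitted verification.
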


Among the left invariant metrics of $\sl$, we have a two-parameter
subfamily contained in $\cM$ of those metrics that have an isometry group of dimension four;
these special metrics $(\l_1,\l_2,\l_3)\in \cM$  correspond to the case where  $\l_1=\l_2$.
The generic case of a
left invariant metric on $\sl $ has a three-dimensional group of
isometries. For any left invariant metric on $\sl $, the
directions of $E_1,E_2,E_3$ can be proven to be principal directions
for the Ricci tensor, and there exist orientation-preserving
diffeomorphisms of order two around any of the integral curves of these
three vector fields, which turn out to be isometries for every left
invariant metric on $\sl $. Hence, the 1-parameter subgroups
$\G^H$ and $\G^E$ are geodesics in every metric described in
Proposition~\ref{metrics}, as they are the fixed point sets of rotational
isometries in every such metric.

In the case that $X$ is $\sl $ equipped with a left
invariant metric whose isometry group is four-dimensional, then
after rescaling this left invariant metric we can consider $\Pi
\colon X\to \Hip ^2$ to be a Riemannian submersion with constant
bundle curvature.
 In this case, the following property is well-known.
 \begin{lemma}
 \label{lemma4.3}
If $X$ is $\sl $ equipped with a left invariant metric whose isometry group is four-dimensional,
then every horocylinder in $X$ has constant mean curvature equal
to the critical mean curvature\footnote{This follows, for instance,
from the fact that parallel horocylinders produce a foliation
and are limits of spheres with constant mean curvature in
such an $X$. } $H(X)$ of $X$.
 \end{lemma}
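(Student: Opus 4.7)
The plan is to establish two claims: that $\Pi^{-1}(\alpha)$ has constant mean curvature, and that this constant equals $H(X)$. For the first claim, note that any horocycle $\alpha\subset\Hip^2$ is preserved by the 1-parameter parabolic subgroup of isometries of $\Hip^2$ fixing the point of $\partial_\infty\Hip^2$ where $\alpha$ accumulates. Since the isometry group of $X$ is 4-dimensional and $\Pi\colon X\to\Hip^2$ is a Riemannian submersion with vertical unit Killing field $\xi$ (generating the central elliptic direction), this parabolic subgroup of $\Hip^2$ lifts to a 1-parameter subgroup of isometries of $X$ preserving $\Pi^{-1}(\alpha)$. Combined with the vertical Killing flow, this yields a 2-dimensional group of isometries of $X$ acting transitively on $\Pi^{-1}(\alpha)$, and hence $\Pi^{-1}(\alpha)$ has constant mean curvature.

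To compute this constant, I would use that $\Pi^{-1}(\alpha)$ is a Killing cylinder. Let $T$ denote the horizontal lift to $\Pi^{-1}(\alpha)$ of the unit tangent $\dot\alpha$, and $N$ the unit normal to $\Pi^{-1}(\alpha)$, which is horizontal and projects to a unit normal of $\alpha$ in $\Hip^2$. Since $\xi$ is Killing and of unit length, $\langle\nabla_\xi\xi,N\rangle=0$, and O'Neill's formulas for the submersion $\Pi$ give $\langle\nabla_T T,N\rangle=\kappa_g(\alpha)$, the geodesic curvature of $\alpha$ in $\Hip^2(\kappa)$. Hence the mean curvature of $\Pi^{-1}(\alpha)$ equals $\kappa_g(\alpha)/2=\sqrt{-\kappa}/2$, independently of the chosen horocycle.

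It remains to identify $\sqrt{-\kappa}/2$ with $H(X)$, which is the approach sketched in the footnote. The parallel horocylinders over the pencil of horocycles tangent to a fixed ideal point form a smooth foliation of $X$ by complete CMC surfaces of value $\sqrt{-\kappa}/2$; applying the mean-curvature comparison principle at the first point of contact of this foliation with any compact orientable $\Sigma\subset X$ then yields $\max_\Sigma|H_\Sigma|\geq\sqrt{-\kappa}/2$, so $H(X)\geq\sqrt{-\kappa}/2$. For the reverse inequality, I would quote the Abresch--Rosenberg rotational CMC spheres in $\Ek$, whose mean curvatures exhaust $(\sqrt{-\kappa}/2,\infty)$ and thus give $H(X)\leq\sqrt{-\kappa}/2$ directly from the definition of $H(X)$. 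The main obstacle is this last identification, which relies on the external theory of CMC spheres and horocylinder foliations in $\Ek$-spaces; these are standard results but are not developed in the present excerpt.
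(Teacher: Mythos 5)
Your argument is correct and amounts to a careful expansion of the footnote in the paper: you use the same two ingredients (the parallel horocylinder foliation for the lower bound $H(X)\geq\sqrt{-\kappa}/2$ via the maximum principle, and the family of rotational CMC spheres in $\E(\kappa,\tau)$ for the upper bound). The additional step — verifying $H=\kappa_g(\alpha)/2$ directly from O'Neill's submersion formulas and the transitive $2$-dimensional isometry group of the Killing cylinder — is a nice self-contained way to pin down the constant without appealing to the spheres being limits of horocylinders, but it is in the same spirit as what the authors sketch.
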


In contrast to the statement of Lemma~\ref{lemma4.3}, if $X$ is $\sl $ equipped with a
left invariant metric whose isometry group
is three-dimensional, then for any horocycle $\a$ in $\HH^2$, the horocylinder $\Pi ^{-1}(\a )\subset X$
does not have constant mean
curvature.

\section{Foliations by leaves of critical mean curvature in $\sl $.}
\label{sec5}
In this section we will use the notation developed in the previous section. We will
{\bf assume that $X$ is $\sl$ endowed with an arbitrary left invariant metric} $g$,
and we will construct a topological product foliation of $X$ by
surfaces with constant mean curvature equal to $H(X)$. The existence
and properties of this foliation will be important in the proof of
Theorem~\ref{t1} which we give in Section~\ref{sec6}.


\begin{definition}
\label{def:basis} {\rm Let $a_1$ be a fixed element of
$\G^P-\{e\}$ and let $a_2$ be one of the two generators of the center of $\sl$ (in
particular, $a_2\in \G^E-\{ e\} $). Thus, the left translations
$l_{a_1},l_{a_2}$ generate a discrete subgroup $\Delta $ of the
isometry group of $X=(\sl ,g)$ isomorphic to $\Z \times \Z $ that acts
properly and discontinuously on $\sl $. We define  $W=\sl /\Delta$ and let
\begin{equation}
\label{eq:piW}
\pi_W: \sl \to W
\end{equation}
denote the corresponding quotient submersion.
Note that the definition of
$W$ is independent of the left invariant metric $g$ considered on $\sl$; however,
as $\Delta $ acts on $\sl $ by isometries of $g$, then $W$ can be endowed
with the quotient metric $g_W$ so that
\begin{equation}
\label{eq:gW}
\pi _W:X\to (W,g_W)
\end{equation}
becomes a local isometry and $(W,g_W)$ is locally homogeneous.
}
\end{definition}

The next two lemmas collect some properties that do not depend
on the left invariant metric $g$ on $\sl $. Item~(2) of Lemma~\ref{claim4.3}
implies that $W$ is diffeomorphic to the product of a torus
with the real line, provided that the surface $\Sigma _1\subset \sl$
in the statement of Lemma~\ref{claim4.3} exists. We will prove the
existence of such surface $\Sigma _1$ in Lemma~\ref{lem5.3} below.
\begin{lemma}
\label{claim4.3}
In the above situation, $K^P$ is invariant under the left action of $\Delta$,
and so the left action of $\G^P$ on $X$ induces a left action of $\G^P$ on
 $W$. Let $\Sigma _1\subset \sl $ be a properly embedded surface invariant
  under the left action of $\G^P$ and under $l_{a_2}$. Then:
\ben
\item For all $h\in \G^H$, the group $\Delta $ acts properly and
discontinuously on $l_h(\Sigma _1)$ and the left action of $\G^P$ leaves invariant $l_h(\Sigma _1)$.

\item Suppose that each integral curve of $K^H$ intersects $\Sigma_1$ transversely in a single point. Then
\[
\mathcal{F}(\Sigma _1,\G^H)=
\{ l_h(\Sigma _1) \ | \ h\in \G^H\}
\]
 is a product foliation of $\sl $ and each of the leaves
$l_h(\Sigma _1)$ of $\mathcal{F}(\Sigma _1,\G^H)$ is a properly
embedded topological plane invariant under the left action
of $\G^P$ and under $l_{a_2}$. In particular, $\mathcal{F}(\Sigma _1,\G^H)$
descends to the product foliation $\{ l_h(\Sigma _1)/\Delta \ | \ h\in \G^H\} $ of $W$,
 each of whose leaves
$l_h(\Sigma _1)/\Delta$ is a torus invariant
under the induced action of $\G^P$ on $W$.
\een

\end{lemma}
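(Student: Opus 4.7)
My plan is to handle the preamble and the two items of the lemma in order, leveraging two algebraic facts recalled in Section~\ref{secCh+}: that $\G^P$ is the unique normal 1-parameter subgroup of $\H^2_{\t}\supset \G^H$, and that $a_2$ lies in the center of $\sl$. The $\Delta$-invariance of $K^P$ is immediate because $K^P$ is a right invariant vector field (its flow is left translation by elements of $\G^P$), so it is preserved by every left translation on $\sl$, and in particular by $\Delta$; hence $K^P$ descends to $W$ and generates there the stated left $\G^P$-action.

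For item~(1), given $h\in \G^H$ and $q\in \G^P$, the identity $qh=h(h^{-1}qh)$ with $h^{-1}qh\in \G^P$ (by normality of $\G^P$ in $\H^2_{\t}$) yields $l_q(l_h(\Sigma_1))=l_h(l_{h^{-1}qh}(\Sigma_1))=l_h(\Sigma_1)$, proving that $\G^P$ preserves each leaf $l_h(\Sigma_1)$. Applying the same identity to $q=a_1$, and combining with $l_{a_2}\circ l_h=l_h\circ l_{a_2}$ (centrality of $a_2$) together with the $l_{a_2}$-invariance of $\Sigma_1$, one concludes that $\Delta=\langle l_{a_1},l_{a_2}\rangle$ preserves each $l_h(\Sigma_1)$. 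Properness and freeness of the restricted $\Delta$-action on $l_h(\Sigma_1)$ follow from those of the ambient $\Delta$-action on $\sl$, since $l_h(\Sigma_1)$ is a closed $\Delta$-invariant subset.

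For item~(2), I would first verify that $\Phi\colon \G^H\times \Sigma_1\to \sl$, $(h,p)\mapsto l_h(p)$, is a diffeomorphism. Given $x\in \sl$, the $K^H$-integral curve through $x$ is the left coset $\G^H x$; by hypothesis it meets $\Sigma_1$ in a unique point $p$, so $x=l_h(p)$ for a unique $h$, yielding bijectivity of $\Phi$. Transversality of $K^H$ to $\Sigma_1$ makes $d\Phi$ invertible everywhere, so $\Phi$ is a diffeomorphism and $\cF(\Sigma_1,\G^H)$ is a product foliation of $\sl$. Since $\sl\approx \R^3$ is connected, the $\Phi$-image decomposition forces $\Sigma_1$ to be connected, and then $\G^H\times \Sigma_1\approx \R^3$ with $\G^H\approx \R$ forces $\Sigma_1\approx \R^2$; the same holds for each leaf.

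For the torus quotient, $\Delta\cong \Z\times \Z$ acts freely and properly discontinuously on the plane $l_h(\Sigma_1)\approx \R^2$, so it remains to verify cocompactness. The $\G^P$-action on $l_h(\Sigma_1)$ is free with simply connected 1-dimensional quotient $\approx \R$; hence the subaction of $\langle l_{a_1}\rangle\subset \G^P$ produces an open cylinder $S^1\times \R$. Finally, $l_{a_2}$, being a fixed-point-free isometry of $(\sl,g)$ commuting with $\G^P$, descends to a fixed-point-free isometry of the base $\R$ and is therefore a nontrivial translation; quotienting the cylinder by $\langle l_{a_2}\rangle$ then yields a compact torus. The main obstacle I expect is precisely this last step: confirming that $l_{a_2}$ acts on the 1-dimensional base as a genuine translation, so that the combined $\Delta$-action on $l_h(\Sigma_1)$ is cocompact. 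This crucially uses both the fixed-point-freeness and the isometric nature of $l_{a_2}$.
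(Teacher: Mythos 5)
Your argument for the preamble contains a genuine error. You assert that $K^P$, being a right invariant vector field, ``is preserved by every left translation on $\sl$.'' This is false: right invariant vector fields are preserved by \emph{right} translations, not by arbitrary left translations. The flow of $K^P$ is $\Phi_t = l_{\G^P(t)}$, and $l_a$ preserves $K^P$ if and only if $l_a$ commutes with every $\Phi_t$, which happens precisely when $a$ commutes with $\G^P$. If this claim were true, $K^P$ would be a left invariant vector field, which it is not. The conclusion you want is nevertheless correct, but it requires the two separate observations the paper makes: $l_{a_1}$ preserves $K^P$ because $a_1\in\G^P$ and the one-parameter group $\G^P$ is abelian (so $a_1$ commutes with $\G^P$), while $l_{a_2}$ preserves $K^P$ because $a_2$ is central, so $l_{a_2}$ coincides with the right translation $r_{a_2}$, and right invariant vector fields are preserved by right translations.

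Your proof of item~(1) is essentially the same as the paper's, based on normality of $\G^P$ in $\H^2_\t$ and centrality of $a_2$.

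For item~(2), your route to the torus quotient differs from the paper's. The paper simply observes that $l_h(\Sigma_1)$ is simply connected and $\Delta\cong\Z\times\Z$ acts freely and properly discontinuously on it, so the quotient is a surface with fundamental group $\Z\times\Z$, and the torus is the only such surface; no separate cocompactness check is needed. You instead build the quotient in two stages (first by $\langle l_{a_1}\rangle\subset\G^P$, yielding a cylinder, then by $l_{a_2}$ descending to a fixed-point-free homeomorphism of the $\R$ base). That argument can be made to work, but it is more delicate than you acknowledge: you must verify that $l_{a_2}$ actually acts freely on the quotient base $l_h(\Sigma_1)/\G^P\approx\R$ (equivalently, that $a_2\notin\G^P$, which holds because the center of $\sl$ meets the parabolic subgroup $\G^P$ only in the identity), and that a fixed-point-free homeomorphism of $\R$ necessarily has cocompact orbits (true, but an extra lemma). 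The paper's $\pi_1$ argument sidesteps all of this.
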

\begin{proof}
$K^P$ is $l_{a_1}$-invariant, since $a_1\in \G^P$ and the left action of $\G^P$ generates $K^P$.
As $a_2$ lies in the center of $\sl $, then $l_{a_2}$ coincides with the
right translation $x\in \sl \mapsto x\, a_2$. Since $K^P$ is right invariant,
then $K^P$ is also $l_{a_2}$-invariant. Therefore, $K^P$ is invariant under the
left action of $\Delta $, and the first sentence in the statement of the lemma follows.
We will keep the notation $K^P$ for the induced vector field\footnote{Note that
the induced vector field $K^P$ on $W$ is a Killing field for the quotient metric
$g_W$ defined by (\ref{eq:gW}), independently on the left invariant metric $g$ on
$\sl $.}  on
$W$, and $\G^P$ for the 1-parameter family of diffeomorphisms\footnote{These are isometries
of $(W,g_W)$.} of $W$ that generate $K^P$.

Now consider a properly embedded surface $\Sigma _1\subset \sl $ invariant
under the left action of $\G^P$ and under $l_{a_2}$.
We first show that given $h\in \G^H$ and $b\in \Delta $, the
left translation by $b$ leaves $l_h(\Sigma _1)$ invariant. As
$\Delta $ is generated by $a_1,a_2$, it suffices to consider the cases $b=a_1$ and
$b=a_2$.
\par
\vspace{.2cm}
(I) Assume $b=a_2$. Since $a_2$ lies in the center of $\sl $, then
$l_{a_2}(l_h(\Sigma _1))= l_h(l_{a_2}(\Sigma _1))\subset l_h(\Sigma _1)$,
because $l_{a_2}(\Sigma _1)\subset \Sigma _1$.
\par
\vspace{.2cm}
(II) Suppose $b=a_1$. Since $\G^P$ is a normal subgroup of $\Hip ^2_{\t }$ and
$\G^H\subset \Hip ^2_{\t }$, then $a_1h=ha_1'$ for some $a_1'\in
\G^P$, and thus $l_{a_1}(l_h(\Sigma _1))=l_h(l _{a_1'}(\Sigma _1))\subset l_h(\Sigma _1)$,
 because $\Sigma _1$ is invariant
under the left translation by every element in $\G^P$.
\par
\vspace{.2cm}
This proves that $\Delta $ leaves $l_h(\Sigma _1)$ invariant. The property that
$\Delta $ acts properly and discontinuously on $l_h(\Sigma _1)$ is obvious.
That the left action of $\G^P$ leaves $l_h(\Sigma _1)$ invariant follows
from the previous proof in (II), taking into account that $a_1$ can be chosen to be
any element in $\G^P-\{e\}$. Now item~(1) of the lemma is proved.

Next suppose that each integral curve of $K^H$ intersects $\Sigma_1$ in a single point
(in particular, $\Sigma _1$ is a properly embedded topological plane). The surfaces
 $l_h(\Sigma _1)$ with $h\in \G^H$ are then pairwise disjoint, properly embedded topological planes
in $\sl $ that define a product
 foliation $\mathcal{F}(\Sigma _1,\G^H)$ of $\sl $, and item~(1) implies that each leaf
$l_h(\Sigma _1)$ of $\mathcal{F}(\Sigma _1,\G^H)$
gives rise to a properly embedded quotient surface
$l_h(\Sigma _1)/\Delta \subset W$  which is invariant
under the induced action of $\G^P$ on $W$. Since the fundamental group of $l_h(\Sigma _1)$ is trivial,
then the fundamental group of $l_h(\Sigma _1)/\Delta$ is isomorphic to $\Z\times \Z$ and
so $l_h(\Sigma _1)/\Delta$
is a torus. Finally, when $h$ varies in $\G ^H$, the related tori $l_h(\Sigma _1)/\Delta$
are pairwise disjoint, thereby defining a product foliation of $W$.
\end{proof}

Recall from (\ref{eq:GHEP}) in the previous section that we parameterized the 1-parameter subgroup
$\G ^H$ by a group homomorphism
\begin{equation}
\label{eq:h}
t\in \R \mapsto h(t)=\G^H(t) \quad \mbox{ with }(\G^H)'(0)=E_2(e).
\end{equation}
In particular,
for every left invariant metric $g$ on $\sl $, the parameterized curve $t\mapsto h(t)=\G^H(t)$ is an
embedded geodesic (because its velocity vector $h'(t)$ has constant length $\l _2$ if $g$
corresponds to a triple
$(\l _1,\l _2,\l _3)\in (\R^+)^3$ by the correspondence of Proposition~\ref{metrics},
and its trace $\G^H$ is the set of fixed
points of an order-two, orientation-preserving isometry
of $X=(\sl ,g)$ around $\G ^H$). In the sequel we will identify $\G ^H$ with $\R $ through
the parameterization $h$.
When $t\to +\infty $, we produce an end point $\theta \in \partial _{\infty }\Hip ^2$
of the projection of $\G^H=h(\R)$ through
the map $\Pi \colon \sl \to \Hip ^2$ defined in (\ref{eq:Pi}). Associated to $\theta $ we have the
set $\mathcal{C}$ of horocycles in $\HH^2$ tangent to $\t$ at infinity.
Inside $\mathcal{C}$ we distinguished the horocycle $\a _0$ that passes through
$\Pi (e)\in \Hip^2$, parameterized appropriately so that $\a _0(0)=\Pi (e)$.
We called the lifted surface $\Pi^{-1}(\a_0)$ the {\it horocylinder}
over $\a _0$; see Figure~\ref{horocyl1}.

\begin{lemma}
\label{lem5.3}
The horocylinder $\Sigma _0=\Pi^{-1}(\a_0)$ satisfies the hypotheses of the
surface $\Sigma_1$ in Lemma~\ref{claim4.3}.
 Hence, the product foliation of $X$
\begin{equation}
 \label{horocfol}
\cF _0=\mathcal{F}(\Sigma _0,\G^H)=\{ l_h(\Sigma _0) \ | \ h\in \G^H\}
\end{equation}
descends to a product foliation $\cF _0/\Delta $ of $W$ by the tori $l_t(\Sigma_0)/\Delta$, $t\in \R=\G^H$,
each of which is invariant under the induced action of $\G^P$ on $W$, and $W$ is diffeomorphic
to the product of a torus with $\R $.
\end{lemma}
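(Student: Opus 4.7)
The plan is to verify the two hypotheses placed on the surface $\Sigma_1$ in Lemma~\ref{claim4.3} for the horocylinder $\Sigma_0=\Pi^{-1}(\a_0)$, and then invoke Lemma~\ref{claim4.3}(2) to obtain the stated foliations and product structure. First I would check that $\Sigma_0$ is properly embedded and invariant under the left action of $\G^P$ and under $l_{a_2}$. Proper embeddedness is automatic since $\Pi\colon \sl\to \H^2$ is a smooth submersion and $\a_0$ is a properly embedded smooth curve in $\H^2$. For invariance under $\G^P$ I would exploit the $\sl$-equivariance of $\Pi$: left multiplication by an element of $\G^P$ descends to a parabolic isometry of $\H^2$ fixing $\t$, and such isometries preserve every horocycle tangent to $\t$ (they are the level sets of the Busemann function based at $\t$), so in particular they preserve $\a_0$, giving $l_g(\Sigma_0)=\Pi^{-1}(g\cdot\a_0)=\Sigma_0$ for every $g\in\G^P$. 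Invariance under $l_{a_2}$ is even more direct: $a_2$ generates the center of $\sl$, which coincides with the kernel of the covering $\sl\to \mathrm{PSL}(2,\R)$, so $l_{a_2}$ acts trivially on $\H^2$ through $\Pi$ and therefore preserves every fiber of $\Pi$, in particular $\Sigma_0$.

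Next I would verify that each integral curve of $K^H$ meets $\Sigma_0$ transversely in exactly one point. Such an integral curve is a left coset $\G^H\cdot x$, whose image under $\Pi$ is, again by equivariance, the orbit of $\Pi(x)\in \H^2$ under the hyperbolic 1-parameter subgroup in $\mathrm{PSL}(2,\R)$ whose axis has $\t$ as an endpoint; any such orbit is either the axis itself or a hypercycle with the same pair of endpoints at infinity. A short check in the upper half-plane model, with $\t$ placed at $\infty$ so that the hyperbolic orbits become Euclidean rays from the other endpoint and $\a_0$ becomes a horizontal line, shows that each such orbit meets $\a_0$ transversely in a unique point. To lift this uniqueness statement to $\sl$, I would check that the composite map $h\in\G^H\mapsto h\cdot \Pi(x)\in \H^2$ is injective, using that $\G^H\cap \G^E=\{e\}$ (so $\G^H$ embeds into $\mathrm{PSL}(2,\R)$, since the kernel of the covering is contained in $\G^E$) together with the free action of hyperbolic 1-parameter subgroups of $\mathrm{PSL}(2,\R)$ on $\H^2$. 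Hence the unique $\a_0$-crossing downstairs lifts to a unique crossing of $\Sigma_0$ in $\sl$, and transversality at that point is immediate from the submersion property of $\Pi$.

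With both hypotheses of Lemma~\ref{claim4.3} in hand for $\Sigma_0$, the conclusions of Lemma~\ref{lem5.3} follow at once: Lemma~\ref{claim4.3}(2) yields the product foliation $\cF_0=\{l_h(\Sigma_0)\mid h\in \G^H\}$ of $\sl$ by properly embedded planes invariant under $\G^P$ and under $l_{a_2}$, descending to the product foliation $\cF_0/\Delta$ of $W$ by tori invariant under the induced $\G^P$-action, and the diffeomorphism $W\approx T^2\times\R$ is a direct consequence of this product structure. The step I expect to be the most delicate is the uniqueness-of-intersection argument in the universal cover $\sl$: a priori the infinite cyclic center of $\sl$ could force an orbit of $K^H$ to meet $\Sigma_0$ in an entire $\Z$-orbit. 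The key observation that prevents this is $\G^H\cap \G^E=\{e\}$, which cleanly reduces the counting question to a purely hyperbolic-plane statement about horocycles meeting hyperbolic orbits.
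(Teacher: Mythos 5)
Your proof is correct, and it takes a genuinely different route from the paper's. The paper's proof never argues inside $\sl$ directly: it first equips $\sl$ with the auxiliary metric $g_0$ corresponding to $(1,1,1)$, invokes Theorem~2.14 and Corollary~3.19 of \cite{mpe11} to produce an explicit isometric identification of $X_0=(\sl,g_0)$ with the semidirect product $\R^2\rtimes_A\R$ where $A=\left(\begin{smallmatrix}2&0\\2&0\end{smallmatrix}\right)$, identifies $\Sigma_0$ with the horizontal plane $\R^2\rtimes_A\{0\}$, $\G^P$ with $\{(x,x,0)\}$, $\G^H$ with $\{(0,0,z)\}$, and $\Delta$ with a planar lattice $\wt\Delta\subset\R^2\rtimes_A\{0\}$, and then reads off every claimed property from the semidirect-product model. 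You instead verify the hypotheses of Lemma~\ref{claim4.3} intrinsically: proper embeddedness from $\Pi$ being a submersion, $\G^P$-invariance from the equivariance of $\Pi$ and the fact that parabolic isometries fixing $\t$ preserve every horocycle in the pencil at $\t$, $l_{a_2}$-invariance from the center being the deck group of $\Pi$, and the single transverse intersection with each $K^H$-integral curve via the free action of the hyperbolic subgroup on $\H^2$ together with the observation that $\G^H\cap\G^E=\{e\}$ (equivalently $\G^H$ injects into $\mathrm{PSL}(2,\R)$). Your route is more elementary and self-contained for this one lemma, and it isolates cleanly the only genuine subtlety --- that the covering $\sl\to\mathrm{PSL}(2,\R)$ could a priori force a whole $\Z$-orbit of intersections. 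What the paper's transfer buys is that the same explicit $\R^2\rtimes_A\R$ model and the identifications of $\G^H$, $\G^P$, $\Delta$, $K^H$, $\partial_z$ set up in this proof are reused heavily in Lemmas~\ref{lemNhoroc}, \ref{lem5.3bis}, \ref{obs} and \ref{c:graph}, so the authors are really building infrastructure here rather than just proving Lemma~\ref{lem5.3}.
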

\begin{proof}
Consider the auxiliary left invariant metric $g_0$ on $\sl $ that makes the basis
in (\ref{eq:[]PSL2R}) orthonormal. Thus, the isometry group of $X_0=(\sl ,g_0)$
is four-dimensional and the projection
$\Pi\colon X_0\to \H^2$ given by (\ref{eq:Pi}) is, after rescaling the left invariant metric,
a Riemannian submersion,
as explained at the end of Section~\ref{secCh+}. It is worth remembering that $X_0$ is isometric
to the semidirect product $\R^2\rtimes_A \R$ with
\begin{equation}
\label{eq:Amatrix}
A=\left(\begin{array}{cc} 2 & 0 \\ 2 & 0 \end{array}\right) ,
\end{equation}
endowed with its canonical left invariant metric; to see why this property holds,
we can apply part (2) of Theorem~2.14 of~\cite{mpe11} to conclude that
$X_0$ is isometric to $\R^2\rtimes_{A(b)} \R$
where $A(b)=\left(\begin{array}{cc} 2 & 0 \\ 2b & 0 \end{array}\right) $; to see that $b=1$,
simply observe that the eigenvalues of the Ricci tensor of $X_0$ are $-6$ (double)
and $2$ (simple), while the eigenvalues of the Ricci tensor of a semidirect product with
its canonical metric are given by equation (2.23) in~\cite{mpe11}. Equality in both collections
of Ricci eigenvalues easily lead to the desired property that $b=1$.
The Riemannian three-manifold $X_0$ is commonly referred to as an $\E(\kappa ,\tau )$-space
with $\kappa =-4$ and $\tau ^2=1$; to be precise, $\Pi\colon X_0\to \HH^2$ is a Riemannian
submersion, where the usual metric on $\HH^2$ has been scaled so that it has
sectional curvature $-4$.

The reader should be aware that in spite of the fact that $X_0$ is isometric
to $\R^2\rtimes _A\R $ with its canonical metric,
the group structure on $X_0$ (that is, the one of $\sl $)
is not isomorphic to the one given by (\ref{eq:operation}) in $\R^2\rtimes _A\R $, as follows from the fact that
$\sl $ is unimodular while $\R^2\rtimes _A\R $ is non-unimodular with the matrix
$A$ above.
Nevertheless, the non-isomorphic Lie groups $\sl $ and $\R^2\rtimes _A\R $ can be considered as
three-dimensional subgroups of the four-dimensional isometry group Iso$(X_0)$ of $X_0$,
both acting by left translation on Iso$(X_0)$ with the same identity element equal to $1_X\in \mbox{Iso}(X_0)$.
In this setting, Corollary~3.19 in~\cite{mpe11} ensures that the connected component
of $[\sl \cap (\R^2\rtimes _A\R )]\subset \mathrm{Iso}(X_0)$ passing through $1_X$ is the two-dimensional
subgroup $\mathcal{H}$ of $\R^2\rtimes _A\R $ given by
\[
 \cH =\{ (x,x,z)\ | \ x,z\in \R \} ,
 \]
 and the entire intersection $[\sl \cap (\R^2\rtimes _A\R )]\subset \mathrm{Iso}(X_0)$ is generated by $\cH$ and the center of $\sl$.
Viewed inside $\sl $, $\mathcal{H}$ corresponds to one of the non-commutative subgroups $\Hip ^2_{\mu }$
given by (\ref{eq:Htheta}) for some $\mu \in \partial _{\infty }\Hip ^2$.
After conjugating the embedding of $\R^2\rtimes_A \R$ into $\mathrm{Iso}(X_0)$ by an appropriate isometry of
$X_0$ which is a rotation around $\G^E$,
we can identify $\mathcal{H}$ inside $\sl $ with the
two-dimensional subgroup $\Hip ^2_{\theta }$ defined just before Definition~\ref{defhorocyl}.
Therefore, geometric and algebraic objects inside $\mathcal{H}\subset \R^2\rtimes _A\R $
have their counterparts inside $\Hip ^2_{\theta }\subset \sl $; for instance, the unique parabolic 1-parameter normal subgroup
$\{ (x,x,0)\ | \ x\in \R \} $ of $\mathcal{H}$ corresponds to $\G ^P$ inside $\Hip ^2_{\theta }$,
and the hyperbolic 1-parameter subgroup $\{ (0,0,z)\ | \ z\in \R \} $ (which is a unit speed
geodesic of $\R^2\rtimes _A\R $ parameterized by $z\in \R $) corresponds to the hyperbolic
1-parameter subgroup $\G^H$ inside $\Hip ^2_{\theta }$.

Under the above isometric identification of $\sl$ with $\R^2\rtimes _A\R $,
the horocylinder $\Sigma _0\subset \sl $, which has constant mean curvature in $X_0$,
corresponds to the horizontal plane $\R^2\rtimes _A\{ 0\} $
(note that $\R^2\rtimes _A\{ 0\} $ is a subgroup of $\R^2\rtimes _A\R $ but
$\Sigma _0$ is not a subgroup of $\sl $), and the center of $\sl$, which is contained
in $\Sigma _0$, can be viewed as a discrete subset of  $\R^2\rtimes_A \{0\} \subset \R^2\rtimes _A\R $.
From here is not difficult to conclude that the $(\Z \times \Z )$-subgroup $\Delta $ of $\sl $ corresponds
to a $(\Z \times \Z)$-lattice $\wt{\Delta }$ inside the two-dimensional subgroup
$\R^2\rtimes_A \{0\}$ of $\R^2\rtimes _A\R $. As a consequence, the quotient space $W=\sl /\Delta$ is
diffeomorphic to $(\R^2\rtimes _A\R )/\wt{\Delta }$, which is diffeomorphic to the product of
the two-torus $(\R^2\rtimes _A\{ 0\} )/\wt{\Delta }$ with the real line $\{ (0,0,z)\ | \ z\in \R \} $.
Now the properties in the statement of the lemma follow easily from the corresponding properties
in $\R^2\rtimes _A\R $; for instance, the foliation $\cF_0=\mathcal{F}(\Sigma _0,\G^H)$ corresponds to
$\{ \R^2\rtimes _A \{ z\} \ | \ z\in \R \} $. This completes the proof.
\end{proof}

We will use the notation introduced in the proof of the previous lemma to make an observation that will be
useful later.
\begin{lemma}
\label{lemNhoroc}
 Consider the left invariant metric $g_0$ on $\sl $ that makes the basis in (\ref{eq:[]PSL2R})
 orthonormal. Let $N_{\cF _0}$ be the unit normal vector field to the foliation $\cF _0$
 of $X_0=(\sl ,g_0)$  by flat horocylinders  defined in (\ref{horocfol}). Then:
 \begin{equation} \label{eq:5.19}
N_{\cF _0}= K^H+f\, K^P
\end{equation}
for  some smooth function $f\colon \sl \to \R $.
\end{lemma}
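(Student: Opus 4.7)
The plan is to pull everything back to the semidirect product model $\XA$ with $A=\left(\begin{smallmatrix}2&0\\2&0\end{smallmatrix}\right)$ and its canonical metric, as identified with $X_0$ in the proof of Lemma~\ref{lem5.3}, and then verify (\ref{eq:5.19}) by a direct computation in the natural coordinates $(x,y,z)$. The reason this should work is that, under that identification, both the foliation $\cF_0$ and the one-parameter subgroups $\G^H,\G^P$ have very clean descriptions: the leaves of $\cF_0$ are the horizontal planes $\R^2\rtimes _A\{z\}$, $\G^H$ corresponds to the vertical $z$-axis $\{(0,0,z)\mid z\in\R\}$, and $\G^P$ corresponds to the parabolic normal subgroup $\{(x,x,0)\mid x\in\R\}$ of the subgroup $\cH$ appearing in the proof of Lemma~\ref{lem5.3}.

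First I would identify $N_{\cF_0}$. Since $\partial_z=E_3$ is, by (\ref{eq:6}) and (\ref{eq:13}), a unit vector orthogonal to $\mathrm{span}(E_1,E_2)=\mathrm{span}(\partial_x,\partial_y)$, and the latter is the tangent plane to each horizontal leaf $\R^2\rtimes_A\{z\}$, the unit normal field $N_{\cF_0}$ is $\pm\partial_z$; choose the orientation so that $N_{\cF_0}=\partial_z$.

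Next I would compute the two right invariant vector fields. A direct calculation from the semidirect product operation (\ref{eq:operation}) shows that the right invariant vector field $K$ on $\XA$ with $K(e)=(\xi,\eta)\in\R^2\oplus\R$ is given by
\[
K(p,z)=(\xi+\eta\,Ap,\;\eta),
\]
for all $(p,z)\in\XA$. Applying this with $(\xi,\eta)=((1,1),0)$, which is $(\G^P)'(0)$ in these coordinates, yields the constant field $K^P=\partial_x+\partial_y$. Applying it with $(\xi,\eta)=((0,0),1)$, which is $(\G^H)'(0)$, and using the explicit form of $A$, yields $K^H(x,y,z)=2x\,\partial_x+2x\,\partial_y+\partial_z$.

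Finally, comparing components:
\[
K^H+f\,K^P=(2x+f)\,\partial_x+(2x+f)\,\partial_y+\partial_z,
\]
so setting $f(x,y,z):=-2x$ produces exactly $\partial_z=N_{\cF_0}$. Pulling $f$ back to $\sl$ via the identification gives a smooth function on $\sl$ satisfying (\ref{eq:5.19}). The only nontrivial point in this proof is bookkeeping: keeping track of which subgroups of $\sl$ correspond to which subgroups of $\XA$ under the isomorphism used in the proof of Lemma~\ref{lem5.3}, since we are using in an essential way that the identification sends $\G^P$ to the parabolic normal subgroup of $\cH$ and $\G^H$ to the hyperbolic geodesic $\{(0,0,z)\mid z\in\R\}$. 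Once those identifications are in place, the remaining computation is routine.
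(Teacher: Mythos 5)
Your proposal is correct and follows essentially the same strategy as the paper's own proof: transport everything to the model $\R^2\rtimes_A\R$ with $A=\left(\begin{smallmatrix}2&0\\2&0\end{smallmatrix}\right)$, recognize $N_{\cF_0}=\partial_z$, compute $K^P=\partial_x+\partial_y$ and $K^H=2x\,\partial_x+2x\,\partial_y+\partial_z$ as the right-invariant fields whose flows are left translation by the corresponding one-parameter subgroups, and compare components. Your closed-form formula $K(p,z)=(\xi+\eta\,Ap,\eta)$ is a nice way to organize the computation that the paper does ad hoc. One small point: your arithmetic gives $f=-2x$, which is in fact the correct sign (from $K^H=2x\,K^P+\partial_z$ one gets $\partial_z=K^H-2x\,K^P$); the paper's printed value $f=2x$ appears to be a sign slip, immaterial to the statement since the lemma only asserts existence of \emph{some} smooth $f$.
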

\begin{proof}
As described in the proof of Lemma~\ref{lem5.3}, $\G^H$ and $\G^P \subset \sl$ correspond, respectively,
 to the 1-parameter
subgroups $\{ (0,0,z)\ | \ z\in \R \} $ and $\{ (x,x,0)\ | \ x\in \R \} $ of $\R^2\rtimes _A\R $.
Keeping the same notation as in the proof of Lemma~\ref{lem5.3},
let
\[
F_3=2x\, \partial _x+2x\, \partial _y+\partial _z, \quad
\wh{K}^P=\partial _x+\partial _y
\]
be the right invariant vector fields on $\R^2\rtimes _A\R $ generated by the right actions
of $\{ (0,0,z)\ | \ z\in \R \} $ and  $\{ (x,x,0)\ | \ x\in \R \} $, respectively.
Then we have
\[
K^H=F_3=2x\, \wh{K}^P+
\partial _z .
\]
As the unit normal vector field to the foliation $\{ \R^2\rtimes _A \{ z\} \ | \ z\in \R \} $
of $\R^2\rtimes _A\R $ is $\partial _z$, then
the last equality implies that (\ref{eq:5.19}) holds with
$f=2x\colon
\R^2\rtimes _A\R \to \R$, which can be considered to be a smooth function on $\sl $.
\end{proof}

We now collect some properties involving any prescribed left invariant metric $g$ on $\sl $ and its
locally homogeneous quotient metric $g_W$ on $W$.
\begin{lemma}
\label{lem5.3bis}
Let $E_W$ be the end of $W$ that contains the proper arc $\pi_W(\Gamma^H [0,\8))$,
where $\G^H$ is parameterized as a subgroup by $h\colon \R \to \G^H$ defined in (\ref{eq:h}).
Given a left invariant metric $g$ on $\sl $, the following properties hold:
\ben
\item The locally homogenous manifold
$(W,g_W)$ has infinite volume and the volume with respect to $g_W$
of every end representative of $E_W$ is finite.

\item If $\Sigma_1\subset X$ is any surface satisfying the hypotheses of Lemma~\ref{claim4.3}-(2),
 then  the area function of the related quotient tori,
 \[
t\in \R \mapsto  A(t)=\mbox{\rm Area}\left( l_{h(t)}(\Sigma_1)/\Delta, g_W\right)
  \]
is exponentially decreasing as $t\to +\infty $. Furthermore, an end
 representative of $E_W$ can be chosen to be
 $\bigcup _{t\in [0,\infty)} [l_{h(t)}(\Sigma_1)/\Delta ]$.
\een
\end{lemma}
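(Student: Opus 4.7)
The plan is to prove item (2) first; item (1) will then follow by Fubini together with a simple locally homogeneous argument for the infinite volume.

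For item (2), I will use the $g$-isometry $l_{h(t)}\colon (\Sigma_1,g|_{\Sigma_1})\to (l_{h(t)}(\Sigma_1),g|_{l_{h(t)}(\Sigma_1)})$ to identify $A(t)$ with the area of a fundamental domain of the conjugated group $\tilde\Delta(t):=h(t)^{-1}\Delta h(t)$ acting on $\Sigma_1$. Since $a_2$ is central in $\sl$, $h(t)^{-1}a_2 h(t)=a_2$; and since $\G^P$ is the unique normal 1-parameter subgroup of $\Hip^2_{\t}$ and contains $a_1$, the conjugate $\tilde a_1(t):=h(t)^{-1}a_1 h(t)$ lies in $\G^P$. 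A standard $\mathrm{SL}(2,\R)$-computation, using that $\t$ is the attracting end of $h([0,\infty))$, shows that if $a_1=\G^P(c_0)$ in the exponential parameterization of $\G^P$, then $\tilde a_1(t)=\G^P(c_0 e^{-\lambda t})$ for some $\lambda>0$ depending only on the parameterization of $\G^H$. I then set up coordinates $(s,v)$ on $\Sigma_1$ adapted to the $\G^P$-foliation, with $\partial_s=K^P|_{\Sigma_1}$; since $K^P$ is Killing for $g$ and $\G^P$ acts freely, the induced metric on $\Sigma_1$ is $s$-independent and its area form reads $\omega(v)\,ds\wedge dv$. In these coordinates $\tilde a_1(t)$ acts as $(s,v)\mapsto(s+c_0 e^{-\lambda t},v)$, while $a_2$ (being central) acts as $(s,v)\mapsto(s+\alpha(v),\gamma(v))$, independently of $t$. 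Taking $F_t=\{(s,v):v\in F_v,\ 0\le s<c_0 e^{-\lambda t}\}$ for a fixed fundamental domain $F_v$ of the induced $a_2$-action on the quotient line $\Sigma_1/\G^P$, integration gives $A(t)=e^{-\lambda t}A(0)$, which is exponentially decreasing as $t\to+\infty$.

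For item (1), the infinite volume of $(W,g_W)$ follows from completeness, local homogeneity and non-compactness of $W$ (Lemma~\ref{lem5.3}): balls of a fixed small radius at any point have a common positive volume, and non-compactness produces infinitely many pairwise disjoint such balls. To obtain the end representative, note that $\G^H$ is an integral curve of $K^H$ and each such integral curve meets $\Sigma_1$ in a single point; hence the proper arc $\pi_W(\G^H([0,\infty)))$ accumulates at the end corresponding to $t\to+\infty$ of the product foliation $\{l_{h(t)}(\Sigma_1)/\Delta\}_{t\in\R}$, and $\bigcup_{t\ge 0}[l_{h(t)}(\Sigma_1)/\Delta]$ is an end representative of $E_W$. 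To bound its volume, I use the diffeomorphism $\Psi\colon\R\times\Sigma_1\to\sl$, $\Psi(t,\sigma)=h(t)\sigma$, and compute $\Psi^*dV_g=J(\sigma)\,dt\wedge dV_{\Sigma_1}$, where $J(\sigma)=|\langle K^H(\sigma),\tilde N(\sigma)\rangle_g|$ and $\tilde N$ denotes the unit normal to $\Sigma_1$. The crucial point is that $J$ is independent of $t$: indeed $(l_{h(t)})_*K^H=K^H$ because $h(t)$ and $h(s)$ commute in the abelian subgroup $\G^H$, and $l_{h(t)}$, being a $g$-isometry, maps $\tilde N$ to the unit normal of $l_{h(t)}(\Sigma_1)$. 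For $t\ge 0$ one may choose $F_t\subset F_0$ (since $\tilde a_1(t)$ has period $c_0 e^{-\lambda t}\le c_0$); as $\overline{F_0}$ is compact in $\Sigma_1$, $J\le M$ on $F_0$ for some $M<\infty$. Combining with item (2),
\begin{equation*}
\mathrm{Vol}\left(\bigcup_{t\ge 0}l_{h(t)}(F_t),g\right)=\int_0^\infty\!\int_{F_t}\!J(\sigma)\,dV_{\Sigma_1}\,dt\le M\int_0^\infty A(t)\,dt=\frac{MA(0)}{\lambda}<\infty.
\end{equation*}

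The main obstacle is the algebraic input that conjugation by the hyperbolic subgroup $\G^H$ contracts the parabolic subgroup $\G^P$ exponentially; once that is established, the rest of item (2) is essentially bookkeeping in adapted coordinates. The key simplification for item (1) is the $t$-independence of the Fubini Jacobian $J$, which reduces the volume bound to controlling a continuous function on the compact fundamental domain $\overline{F_0}$ and automatically converts the exponential decay of $A(t)$ into finiteness of the end volume.
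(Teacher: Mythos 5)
Your proof is correct and follows a genuinely different route from the paper's. The paper reduces the statement to the single reference metric $(1,1,1)\in\cM$ via the observation that any two left-invariant metrics on a metric Lie group are quasi-isometric by the identity (which preserves both finiteness of volume and exponential decay rates up to multiplicative constants), and then exploits the isometric identification of $X_0=(\sl,g_0)$ with $\R^2\rtimes_A\R$ equipped with its canonical metric to read off the volume element $e^{-z\,\mathrm{trace}(A)}\,dx\wedge dy\wedge dz$ and the exponentially decaying area elements directly from that explicit model; the graph argument then transfers the area statement from $\{\R^2\rtimes_A\{z\}\}$ to a general $\Sigma_1$. You instead work intrinsically with the arbitrary metric $g$, replacing the model computation by two ingredients: (i) the Lie-algebraic identity $\mathrm{ad}_{E_2}(E_1+E_3)=2(E_1+E_3)$, which forces $h(t)^{-1}\G^P(c_0)h(t)=\G^P(c_0e^{-2t})$ and hence, in $\G^P$-adapted coordinates on $\Sigma_1$ (where the induced metric is $\G^P$-invariant), yields $A(t)=e^{-2t}A(0)$ exactly; and (ii) the Fubini decomposition via $\Psi(t,\sigma)=h(t)\sigma$, whose Jacobian $J=|\langle K^H,N_{\Sigma_1}\rangle_g|$ is $t$-independent because $\G^H$ is abelian (so $(l_{h(t)})_*K^H=K^H$) and $l_{h(t)}$ is a $g$-isometry mapping normals to normals, reducing the finiteness of the end volume to $M\int_0^\infty A(t)\,dt<\infty$. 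Both arguments are sound; yours avoids both the quasi-isometry reduction and the somewhat involved Ricci-eigenvalue identification of $X_0$ with a semidirect product, at the price of more bookkeeping with conjugated lattices and fundamental domains. The infinite-volume claim, which the paper reads off from the explicit volume formula on the other end, you instead derive more softly from local homogeneity, bounded geometry and non-compactness of $W$, which is also perfectly adequate.
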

\begin{proof}
Since any two left invariant metrics on a metric Lie group are quasi
isometric by the identity mapping,
it suffices to prove the result for any particular left invariant metric on $\sl $.
So, choose the metric $g=g_0$ on $\sl $ to be the one that makes the basis
in (\ref{eq:[]PSL2R}) orthonormal.

As explained in the proof of Lemma~\ref{lem5.3}, $X_0=(\sl ,g_0)$ is isometric to
$\R^2\rtimes _A\R $ endowed with its canonical metric ($A$ is the matrix given by
(\ref{eq:Amatrix})) and the foliation
$\{ \R^2\rtimes _A\{ z\} \ | \ z\in \R \} $ of $\R^2\rtimes_A \R$, all whose leaves have the same
constant mean curvature, corresponds to the foliation $\cF _0$ of $X_0$ by parallel horocylinders
defined in (\ref{horocfol}). Recall from the proof of Lemma~\ref{lemNhoroc} that
the unit vector field $\partial _z$ in the model
$\R^2\rtimes_A \R$ of $X_0$ corresponds to the unit normal field to the leaves of $\cF(\Sigma _0,\G^H)$.

By formula (3.51) in \cite{mpe11},
the volume element associated to the canonical metric on $\R^2\rtimes_A \R$ is
 \begin{equation}
 \label{volelem}
  {\rm d Vol} = e^{-z \,{\rm trace} (A)} dx \wedge dy \wedge dz.
  \end{equation}
Equation (\ref{eq:13}) implies that
$\partial _z$ has constant length 1 and is everywhere orthogonal to the vectors fields
$\partial_x$, $\partial _y$, whose lengths and inner product are given by
 \begin{equation}
 \label{lengths}
 \| {\textstyle \partial_x}\| ^2=e^{-4z}+(e^{-2z}-1)^2,\quad
 \| {\textstyle \partial_y}\| =1,
 \quad \langle {\textstyle \partial_x},{\textstyle \partial_y}\rangle =
 e^{-2z}-1.
 \end{equation}
 Now consider the $(\Z \times \Z )$-lattice $\wt{\Delta }\subset \R^2\rtimes _A\{ 0\} $ corresponding
 to the subgroup $\Delta \subset \sl $ through the isometry between $X_0$ and $\R^2\rtimes _A\R $
 (see the end of the proof of Lemma~\ref{lem5.3}). Equation (\ref{volelem}) implies
that the end of $(\R^2\rtimes _A\R )/\wt{\Delta }$ in which the geodesic arc $\{(0,0)\}\times [0,\8)$
lies has finite volume with respect to the quotient of the canonical metric on $\R^2\rtimes _A\R $
(and the other end has infinite volume), hence the same properties hold for $(W,g_W)$.
It also follows that the area
element of the plane $\R^2\rtimes_A \{z\}$ has the form $d{\rm Area}=e^{-z \,{\rm trace} (A)} dx \wedge dy$,
which implies that the areas of the tori $(\R^2\rtimes _A\{ z\} )/\widetilde{\Delta }$ in the related foliation
of $(\R^2\rtimes _A\R )/\wt{\Delta }$ decrease exponentially as a function of $z\to +\infty $.
In fact, if $\Sigma _1\subset X$ is a surface satisfying the hypotheses in item~(2) of the lemma, then
$\Sigma _1$ can be considered to be a doubly-periodic graph over $\R^2\rtimes _A\{ 0\} $, from where
we directly deduce that the $(g_0)_W$-area of $l_{h(t)}\Sigma /\Delta $ decreases exponentially as
$t\to +\infty $ (since the tori $(\R^2/\Delta )\rtimes _A\{ t\} $ have the same property).
This completes the proof of the lemma.
\end{proof}

The main result of this section is the following one:
\begin{proposition}
  \label{claim:3d}
Let $X$ be $\sl$ endowed with a left invariant metric $g$. 
Then, $2 H(X) = {\rm Ch}(X)$.
Furthermore, there exists a properly embedded surface $\Sigma
\subset X$ with constant mean curvature $H(X)$ such that the
following properties hold.
\begin{enumerate}[(A)]
\item $\G^P\subset \Sigma $ and $K^P$ is everywhere tangent to $\Sigma $
(equivalently, $\Sigma $ is invariant under the left translation by
every element in $\G^P$ and $e\in \Sigma$).

\item $\Sigma $  intersects each of the integral curves of $K^H$
transversely in a single point. Equivalently, $\Sigma $ is
an entire graph with respect to the Killing vector field $K^H$ on $X$ generated by $\G^H$.
In particular, $\Sigma $ is a topological plane and
\[
\mathcal{F}= \mathcal{F}(\Sigma ,\G ^H)=\{ l_h(\Sigma )\ | \ h\in \G^H\}
\]
is a product foliation of $X$, all whose leaves have constant
mean curvature $H(X)$.

\item $\Sigma $ is invariant under the left translation $l_{a_2}$. Furthermore, the
$(\Z \times \Z )$-subgroup $\Delta $ of isometries of $X$ appearing in
Definition~\ref{def:basis} acts properly and discontinuously on $\Sigma $, thereby defining a
quotient surface $\Sigma /\Delta$ in $(W=\sl /\Delta ,g_W)$.

\item Each leaf of the foliation  $\mathcal{F}$ is invariant under $\Delta$, and
$\mathcal{F}$ descends to a product quotient
foliation $\mathcal{F}/\Delta $ of $(W,g_W)$ by tori with constant mean
curvature $H(X)$.

\item Given $T\in \R$, consider the domain
\begin{equation}
\label{end}
\mathcal{D}(T)=\bigcup _{t=T}^{\infty }[l_{h(t)}(\Sigma )/\Delta ],
\end{equation}
which is an end representative of the end $E_W$ of $(W,g_W)$ with finite volume.
Then, $\mathcal{D}(T)$ is the unique solution to the isoperimetric problem in $(W,g_W)$ for
its (finite) value of the enclosed volume. Moreover, the mean curvature vector
of $\parc \cD(T)$ points into $\cD(T)$.

\end{enumerate}
\end{proposition}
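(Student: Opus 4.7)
The plan is to build the surface $\Sigma$ as the limit of a sequence of minimizing CMC surfaces in the quotient $W = \sl/\Delta$, exploiting the fact from Lemma~\ref{lem5.3bis} that the end $E_W$ has finite volume $V_0$ and that there exist $\G^P$-invariant tori in $W$ whose areas decay exponentially into this end. Before doing this, I would dispose of the special case in which the left invariant metric $g$ has a four-dimensional isometry group: then $\Sigma_0 = \Pi^{-1}(\alpha_0)$ is the required surface, as it has constant mean curvature $H(X)$ by Lemma~\ref{lemma4.3}, contains $\G^P$ by the choice of $\alpha_0$, and Lemma~\ref{lem5.3} delivers the product foliation $\cF_0$; the identity $2H(X)=\mathrm{Ch}(X)$ in this case is read off from the isometric identification with the semidirect product $\R^2\rtimes_A\R$ and Proposition~\ref{propos2.5}.

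For a general left invariant metric $g$, I would consider the restricted isoperimetric problem of minimizing area among compact domains $\Omega \subset W$ that are $\G^P$-invariant, $l_{a_2}$-invariant, and bound a finite volume in the end $E_W$. Concretely, for each $V\in(0,V_0)$ look at the class $\cC_V$ of such compact sets of volume $V$ contained in $E_W$; this class is nonempty because the slabs obtained from any $\G^P$- and $l_{a_2}$-invariant test torus (which exist by Lemma~\ref{lem5.3}) provide candidates. Standard regularity and compactness for minimizers of this equivariant problem, using the curvature estimates mentioned in Section~\ref{secprel} and the local homogeneity of $(W,g_W)$, produce a $\G^P$- and $l_{a_2}$-invariant domain $\Omega_V$ whose boundary $\partial\Omega_V$ is smooth and has some constant mean curvature $H_V\geq 0$ with respect to the inward pointing normal. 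As $V\to V_0^-$ the $\Omega_V$ exhaust $E_W$, so their boundaries escape into $E_W$; lifting to $X$ and translating by appropriate elements of $\G^H$ so that the lifted surface always passes through $e$, the resulting sequence of properly embedded CMC surfaces in $X$ has uniformly bounded geometry and subconverges smoothly to a properly embedded surface $\Sigma\subset X$ of constant mean curvature $H_\infty\geq 0$ containing $e$ and invariant under $\G^P$ and under $l_{a_2}$. Invoking Lemma~\ref{claim4.3}(2), after checking that each integral curve of $K^H$ meets $\Sigma$ in exactly one point, the family $\cF = \{l_h(\Sigma)\mid h\in\G^H\}$ gives the desired foliation, which yields (A)--(D).

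It remains to identify the value $H_\infty$ and to prove (E). Integrating the divergence of the unit normal field to $\cF$ over an arbitrary compact domain $\cD\subset X$ and applying the divergence theorem yields $\mathrm{Ch}(X)\leq 2H_\infty$, while the mean curvature comparison principle between leaves of $\cF$ and any compact immersed test surface gives $H(X)\geq H_\infty$; combining these with $2H(X)\leq \mathrm{Ch}(X)$ from Lemma~\ref{lem2.4} forces $H_\infty = H(X) = \tfrac12\mathrm{Ch}(X)$. For (E), the exponentially decaying area of the leaves from Lemma~\ref{lem5.3bis} implies that the restricted isoperimetric profile in $E_W$ is realized by the slabs $\cD(T)$; uniqueness of the minimizer for each volume together with the CMC property of the leaves and the orientation of $\cF$ then shows that $\partial\cD(T)$ has mean curvature vector pointing into $\cD(T)$ and that $\cD(T)$ is the unique isoperimetric domain in $(W,g_W)$ of volume $\mathrm{Vol}(\cD(T))$.

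The main obstacle I anticipate is the equivariant existence and regularity step that produces the $\G^P$- and $l_{a_2}$-invariant minimizers $\Omega_V$: the one-parameter group $\G^P$ is noncompact and its orbits in $W$ are generically non-closed, so the usual symmetrization-by-averaging techniques are not available. One must either parametrize admissible surfaces as graphs over a fixed transversal to the $\G^P$-orbits and apply direct calculus of variations in this reduced class, or prove a posteriori that for $V$ sufficiently close to $V_0$ the unconstrained isoperimetric minimizer in $W$ is automatically invariant under both $\G^P$ and $l_{a_2}$ by a uniqueness argument. A secondary delicate point is ensuring, in the limit, that $\Sigma$ is an \emph{entire} graph over $K^H$ rather than merely a local graph; this should follow from the fact that $\G^P$-invariance together with $l_{a_2}$-invariance and transversality at a single point propagate globally along orbits of $\G^H$ via a continuity argument combined with the already-established product structure in the benchmark case $g_0$.
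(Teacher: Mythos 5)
Your route is genuinely different from the paper's: you try to build the surface directly as a limit of equivariant isoperimetric minimizers in the quotient $W$, whereas the paper reduces everything to a conditional statement (Lemma~\ref{lem:H}: any properly embedded surface satisfying (A), (B), (C) automatically has $H = H(X) = \tfrac12\mathrm{Ch}(X)$ and satisfies (D), (E)) and then runs an open--closed--nonempty continuity argument in the connected space $\cM$ of left invariant metrics. Your construction idea is attractive in principle, but there are serious gaps.

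The most important one is the claimed inequality. You write that integrating $\mathrm{div}\,N_{\cF}$ over \emph{arbitrary} compact domains $\cD\subset X$ and applying the divergence theorem ``yields $\mathrm{Ch}(X)\leq 2H_\infty$.'' The divergence theorem over an arbitrary $\cD$ actually gives the \emph{opposite} inequality: since $\mathrm{div}\,N_{\cF}=\pm 2H_\infty$ and $|\langle N_{\cF},\eta\rangle|\leq 1$ on $\partial\cD$, one gets $2H_\infty\,\mathrm{Vol}(\cD)\leq \mathrm{Area}(\partial\cD)$, hence $2H_\infty\leq\mathrm{Ch}(X)$. This is the same direction as the maximum-principle estimate $H_\infty\leq H(X)$ and as Lemma~\ref{lem2.4}, so the chain you combine does not close and you cannot deduce $H_\infty = H(X) = \tfrac12\mathrm{Ch}(X)$. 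The bound you actually need, $\mathrm{Ch}(X)\leq 2H_\infty$, is precisely the hard inequality; the paper establishes it in Assertion~\ref{claim4.6bis} with an explicit ``box'' construction $B(n,t_0)$ whose side and top areas become negligible relative to its volume, so that its isoperimetric ratio approaches $2H_\infty$. No version of this construction appears in your outline, so the mean-curvature identification is unproved.

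A second, more technical issue concerns the equivariant minimization. In fact the obstacle you flag is misdiagnosed: since $a_1\in\G^P\cap\Delta$, the $\G^P$-action on $W$ has stabilizer $\langle a_1\rangle$ at every point, so the $\G^P$-orbits in $W$ are \emph{circles}, not non-closed orbits, and the action factors through $\S^1$. That makes symmetrization more plausible than you feared, but the rest of the step is still substantial: one must show that the $\G^P$-equivariant minimizer has smooth, \emph{connected} boundary, that this boundary is a torus, and crucially that its lift to $\sl$ meets each $K^H$-integral curve exactly once. You acknowledge this last point but defer it to ``a continuity argument combined with the product structure in the benchmark case $g_0$,'' which does not in itself supply an argument; in the paper this is exactly what the open--closed continuity method tracks (via the angle function $\sphericalangle_{L'}$ in Lemma~\ref{c:graph} and the non-degeneration argument of Lemma~\ref{lem:closed}). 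Finally, for item~(E), ``the exponentially decaying area of the leaves implies the restricted isoperimetric profile is realized by the slabs $\cD(T)$'' skips the actual calibration/Divergence-Theorem computation (Assertion~\ref{ass5.10}), which is needed to rule out domains not of slab type and to prove uniqueness.
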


Before proving the above proposition, we state and prove the following key lemma whose
proof will occupy several pages of this section.

\begin{lemma}\label{lem:H}
Let $X$ be $\sl $ endowed with a left invariant metric. Suppose $\Sigma \subset X$
is a properly embedded surface of
constant mean curvature $H \geq 0$ that satisfies conditions (A), (B) and (C) in
Proposition~\ref{claim:3d}. Then, $2H={\rm Ch}(X)=2 H(X)$ and $\Sigma$ satisfies
the remaining properties (D) and (E) in Proposition~\ref{claim:3d}.
\end{lemma}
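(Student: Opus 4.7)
My approach has four main steps: derive (D) directly from Lemma~\ref{claim4.3}, establish the identity $A(T)=2H\,V(T)$ in $(W,g_W)$ via first variation, obtain the equalities $2H=\mathrm{Ch}(X)=2H(X)$ by a F{\o}lner-type construction in $X$ combined with the maximum principle applied to the foliation $\cF$, and settle uniqueness in (E) by a calibration with the unit normal to $\cF/\Delta$.

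\medskip

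Property (D) is immediate: since every left translation $l_h$ with $h\in\G^H$ is an isometry of the left-invariant metric, each leaf $l_h(\Sigma)$ has the same constant mean curvature $H$ as $\Sigma$, so Lemma~\ref{claim4.3} applied with $\Sigma_1=\Sigma$ yields the product foliation $\cF$, the $\Delta$-invariance of each leaf and the descent to a product foliation of $W$ by CMC-$H$ tori. For the identity $A(T)/V(T)=2H$, with $A(T)=\mathrm{Area}(\partial\mathcal{D}(T))$ and $V(T)=\mathrm{Vol}(\mathcal{D}(T))$ both finite by Lemma~\ref{lem5.3bis}, I will parameterize the deformation of $\partial\mathcal{D}(T)$ by the flow induced by $K^H$ on $W$ and apply the first variation formulas for area and volume; both $A'(T)$ and $V'(T)$ are the respective multiples of $\int_{\partial\mathcal{D}(T)}\langle K^H,N\rangle\,dA$ with ratio $2H$, where $N$ is the unit normal to $\cF/\Delta$ pointing into $\mathcal{D}(T)$ (so that $\vec{H}=2H\,N$). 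Integrating and using that $A(T),V(T)\to 0$ as $T\to+\infty$ (Lemma~\ref{lem5.3bis}) yields $A(T)=2H\,V(T)$ globally. The same computation fixes $\vec{H}$ to point into $\mathcal{D}(T)$ and forces $H>0$ (otherwise $A(t)$ could not decay), which is the mean-curvature statement of~(E).

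\medskip

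To prove $\mathrm{Ch}(X)\le 2H$ I will use that $\Delta\cong\Z\times\Z$ is amenable. Pick a F{\o}lner sequence $\{F_n\}\subset\Delta$ with $|\partial F_n|/|F_n|\to 0$, a transversal fundamental domain $P\subset X$ for $\Delta$ (obtained by smoothly gluing $\Delta$-fundamental domains on each leaf), and consider the compact domain $\Omega_{n,T,S}=\bigsqcup_{\delta\in F_n}\delta(P\cap\{T\le t\le S\})\subset X$. A direct calculation yields $\mathrm{Vol}(\Omega_{n,T,S})=|F_n|(V(T)-V(S))$ and $\mathrm{Area}(\partial\Omega_{n,T,S})\le|F_n|(A(T)+A(S))+|\partial F_n|\int_T^S\ell(t)\,dt$, where $\ell(t)$ is the leafwise perimeter of $\partial P$ at parameter $t$; since the induced metric on the leaves contracts exponentially in the $K^H$-direction, $\ell(t)$ decays exponentially by the same argument that proves Lemma~\ref{lem5.3bis}. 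Letting $S\to+\infty$ and then $n\to\infty$, the ratio tends to $A(T)/V(T)=2H$, giving $\mathrm{Ch}(X)\le 2H$. For the reverse direction on $H(X)$, let $S\subset X$ be any closed immersed orientable surface; the continuous restriction $t|_S$ attains its maximum at a point $p\in S$, at which $S$ is tangent to the leaf $L_{t(p)}$ and lies locally on the side $\{t\le t(p)\}$. Since $\vec{H}_{L_{t(p)}}$ points into $\{t\ge t(p)\}$, the one-sided mean curvature maximum principle at $p$ gives $|H_S(p)|\ge H$, so $\max_S|H_S|\ge H$ and hence $H(X)\ge H$. Combined with Lemma~\ref{lem2.4}, this yields $2H\le 2H(X)\le\mathrm{Ch}(X)\le 2H$, forcing all three quantities to coincide.

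\medskip

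For the uniqueness in (E), I will use $N$ as a calibration: since every leaf of $\cF/\Delta$ has mean-curvature vector $+2H\,N$, the vector field $N$ is smooth on $W$ and satisfies $\mathrm{div}(N)=-2H$. For any smooth finite-volume domain $D\subset W$, the divergence theorem gives $-2H\,\mathrm{Vol}(D)=\int_{\partial D}\langle N,\nu\rangle\,dA\ge -\mathrm{Area}(\partial D)$, hence $\mathrm{Area}(\partial D)\ge 2H\,\mathrm{Vol}(D)$. When $\mathrm{Vol}(D)=V(T)$ this gives $\mathrm{Area}(\partial D)\ge A(T)$, with equality iff $\nu=-N$ everywhere on $\partial D$; this forces each connected component of $\partial D$ to be a leaf $L_{t_i}$ with $D$ on its $t$-positive side, and a short order-theoretic argument (a second boundary leaf at smaller $t$-value would yield $\nu=+N$ there, contradicting the equality) reduces $\partial D$ to a single $L_{t_0}$, so $D=\mathcal{D}(t_0)$ and matching volumes gives $t_0=T$. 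The principal technical obstacle I anticipate is the F{\o}lner bookkeeping in the previous paragraph, specifically proving the exponential decay of $\ell(t)$ and bounding $\int_T^\infty\ell(t)\,dt/V(T)$ uniformly; once the sign conventions for $N$, $\vec{H}$ and $\mathrm{div}(N)$ are fixed in accordance with the direction of area decrease, the maximum principle comparison and the calibration are standard.
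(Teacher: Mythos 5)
Your overall strategy follows the paper's closely: property~(D) from Lemma~\ref{claim4.3}, the maximum principle against the foliation to get $H(X)\geq H$, Lemma~\ref{lem2.4} for $2H(X)\leq\mathrm{Ch}(X)$, a box-type construction for $\mathrm{Ch}(X)\leq 2H$, and a calibration with $N_{\cF/\Delta}$ for the uniqueness claim in (E). Two of your steps contain genuine errors, one of which breaks the argument as written.

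The serious gap is in the F\o lner construction. You assert that the leafwise perimeter $\ell(t)$ of the fundamental domain decays exponentially ``by the same argument that proves Lemma~\ref{lem5.3bis}\,'', and you then pass $S\to+\infty$ first, relying on $\int_T^{\infty}\ell(t)\,dt<\infty$. That claim is false: Lemma~\ref{lem5.3bis} gives exponential decay of the \emph{area} of $l_{h(t)}(\Sigma)/\Delta$, not of its systole or diameter. In the $\R^2\rtimes_A\R$ model (with $A$ as in~(\ref{eq:Amatrix})), equation~(\ref{lengths}) shows that only the parabolic direction $\partial_x+\partial_y$ contracts (like $e^{-2z}$); a transversal direction $p\,\partial_x+q\,\partial_y$ with $p\neq q$ has $g$-norm tending to $|p-q|>0$ as $z\to+\infty$. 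Since $\Delta$ is spanned by $a_1\in\G^P$ (parabolic, contracting) and $a_2\in Z(\sl)$ (not parabolic), the second side of the fundamental parallelogram does \emph{not} shrink, so $\ell(t)$ converges to a positive constant and $\int_T^{\infty}\ell(t)\,dt$ diverges. Consequently your lateral-area term $|\partial F_n|\int_T^S\ell(t)\,dt$ blows up if you let $S\to\infty$ with $n$ fixed. This is exactly why the paper's box construction fixes the height $t_0$ first and then lets $n\to\infty$, so the side term $\frac{O(|\partial F_n|)\cdot(S-T)}{|F_n|\cdot(V(T)-V(S))}$ vanishes with $n$, and only afterwards sends $S\to\infty$. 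Your construction is salvageable with exactly this reordering of limits (and you could even replace the paper's exponential bound $C_1 e^{C_2 t_0}$ in~(\ref{sides}) by a linear-in-$S$ bound, since you flow along $\partial_z$ rather than along $N_{\cF}$), but as written the proof does not go through.

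There is also a sign slip in the maximum-principle step. The mean curvature vector of each leaf points into $\cD(T)=\{t\geq T\}$, i.e.\ toward increasing $t$. At a \emph{maximum} of $t|_S$, the comparison surface $S$ lies on the side $\{t\leq t(p)\}$, which is the side \emph{away from} $\vec H_{L_{t(p)}}$; the comparison principle there only gives $H_S(p)\leq H$ (with respect to the normal pointing toward increasing $t$), which does not bound $|H_S(p)|$ from below. You need the \emph{minimum} of $t|_S$: there $S$ lies in $\{t\geq t(q)\}$, on the mean-convex side of $L_{t(q)}$, and the comparison gives $H_S(q)\geq H>0$, hence $\max_S|H_S|\geq H$. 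Finally, two smaller points: (i) the divergence theorem in the calibration step is applied to a possibly non-compact finite-volume competitor $D$; the paper handles this via a compact exhaustion plus a coarea argument showing the contribution of $\partial\Omega_n\cap\partial W_n$ vanishes, and you should say something to this effect; and (ii) $K^H$ does not descend to $W$ (since $\G^H$ does not normalize $\Delta$), so the variation generating the family $\partial\cD(T)$ on $W$ must be taken to be a different vector field, or (as the paper does) one should compute $A(T)=2H\,V(T)$ directly from the divergence theorem applied to $N_{\cF/\Delta}$ on $\cD(T)$.
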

\begin{proof}
By Lemma~\ref{claim4.3}, $\Delta $ acts properly and discontinuously
on every leaf of the product foliation $\cF= \mathcal{F}(\Sigma ,\G ^H)$, and $\cF $ descends
to a quotient product foliation $\mathcal{F}/\Delta =\{ l_{h(t)}(\Sigma )/\Delta \ | \ t\in \R \} $
of $W$ by tori. In particular, $W$ is diffeomorphic to $(\Sigma/\Delta)\times \R$.
As $\Sigma $ has constant mean curvature $H$ in $X$ and $\Delta \subset \mbox{Iso}(X)$,
then $\Sigma /\Delta $ has constant mean curvature $H$ in $(W,g_W)$
and the same holds for all the leaves of $\mathcal{F}/\Delta $ because these leaves are
all isometric by an ambient isometry.
 So, in order to show that $\Sigma$ satisfies condition~(D) of
Proposition~\ref{claim:3d},
it remains to prove that $H=H(X)=\frac{1}{2} {\rm Ch}(X)$, which we do next.

By Lemma~\ref{lem5.3bis}, the end $E_W$ containing the proper arc $\pi _W(\G^H [0,\infty ))$
has finite volume with respect to $g_W$. Recall that we defined in (\ref{end})
the end representative $\cD(T)$ of $E_W$, for each $T\in
\R $. It is worth reparameterizing $T\mapsto \cD(T)$ by its enclosed finite volume
with respect to $g_W$: as $T\in \R \mapsto \mbox{Vol}(\cD (T),g_W)$ is strictly decreasing,
we can define for each $V>0$ the end representative
\[
\Omega  (V):=\cD(T(V))=\bigcup _{t=T(V)}^{\infty }[l_{h(t)}(\Sigma  )/\Delta ],
\]
of $E_W$ where $T(V)$ is uniquely defined by the equality {\rm Vol}$(\Omega
 (V),g_W)=V$.

\begin{assertion}
\label{claim5.6}
In the above situation,
 $H\neq 0$ and given $V>0$, the mean curvature vector of $\parc \Omega  (V)$ 
 points into $\Omega  (V)$. Equivalently, for every $T\in \R$, the mean curvature vector
of $\parc \cD(T)$ points into $\cD(T)$.
\end{assertion}
\begin{proof}
We will apply the Divergence Theorem to the unit normal field
$N_{{\cF }/\Delta }$ of the foliation $\mathcal{F}/\Delta $ (with respect to the
metric $g_W$). As the leaves of $\cF /\Delta $ are the quotient tori of left
translations of $\Sigma $ by elements $h\in \G^H$, then the unit normal vector
$N_{\cF}$ to $\mathcal{F}$ satisfies
\[
N_{\cF}(l_h(q))=(l_h)_*(N_{\Sigma }(q)), \quad \mbox{for every $q\in \Sigma $, $h\in \G^H$,}
\]
where $N_{\Sigma }$ stands for the unit normal vector to $\Sigma $ (we will
assume without loss of generality that $N_{\Sigma }$ is the unit normal for which the
mean curvature of $\Sigma $ is $H\geq 0$).
Furthermore, the divergence of $N_{\cF}$ (resp. of $N_{\cF /\Delta }$) with
respect to $g$ (resp. to $g_W$) is equal  to the negative
of twice the mean curvature $H$ of the leaves of $\mathcal{F}$ (resp. of
$\mathcal{F}/\Delta $).
Since $\Omega  (V)$ has finite volume and $\mbox{div}_W\left( N_{\cF /\Delta }\right)$ has a fixed sign,
the Divergence Theorem gives
\[
-2H\, V=-2H\, {\rm Vol} (\Omega (V),g_W) =\int _{\Omega (V)}\mbox{div}_W\left( N_{\cF /\Delta }\right) =
\int_{\parc \Omega (V)} g_W( N_{{\cF }/\Delta },N_{\parc \Omega (V)}),
\]
where $N_{\parc \Omega (V)}$ is the outward pointing
unit normal of $\parc \Omega (V)$. Since $\partial \Omega (V)=l_{h(T(V))}(\Sigma )/\Delta $, then
$(N_{{\cF }/\Delta })|_{\parc \Omega (V)}=\ve N_{\parc \Omega (V)}$ where $\ve =\pm 1$ and
the last equation reads $-2H\, V=\ve \mbox{Area}\left( l_{h(T(V))}(\Sigma )/\Delta ,g_W\right) $. In particular,
$H > 0$ and $\ve =-1$. Finally, the mean curvature vector of $\parc \Omega  (V)$ is
$H(N_{{\cF }/\Delta })|_{\parc \Omega (V)}=-H N_{\parc \Omega (V)}$, which
 points into $\Omega  (V)$.
\end{proof}

Given $q\in X$, let $\g _q\colon \R \to X$ be the integral curve of
$N_{\cF}$ passing through $q$, i.e.
$\g _q(0)=q$, $\g _q'(u)=N_{\cF}(\g _q(u))$, for all $u\in \R $
(note that $\g _q$ is defined for every value of $u\in \R $ as $X$ is
complete and $N_{\cF}$ is bounded). Let $\phi_u\colon X\to X$, $\phi _u(q)=
\g _q(u)$, for all $u\in\R$. Thus, $\{ \phi _u\} _{u\in \R }$ is the
1-parameter group of diffeomorphisms of $X $ generated by $N_{\cF}$.

\begin{assertion}
\label{claim2}
 There exist constants $C_1,C_2>1$ such that:
\ben
\item For any unit tangent vector $v_p \in T_pX$ at a point $p\in X$ and $u\in [0,\infty]$,
$\frac{1}{C_1e^u}\leq |(\phi_u)_{*}(v_p)|\leq C_1e^{u}$.
\item Recall that $\G^H$ was considered to be parameterized
by $h(t)\in \G^H$, $t\in \R $ (equation (\ref{eq:h})). Then for any point
$q\in \Sigma  $ and $t_0\geq 1$, it holds that $\g_q(C_2 t_0)$
 lies in the set
$\bigcup_{t\in [t_0,\infty)} l_{h(t)}(\Sigma )$.
\een
\end{assertion}
\begin{proof}
Recall that $\Sigma $ is invariant under the left action of $\Delta $, with quotient a torus
$\Sigma /\Delta $.  Since left translations by elements in
$\Delta \cup \G^H$ leave invariant $\cF  $, then $N_{\cF}$ is also
invariant under left translations by elements in $\Delta \cup \G^H$.
This property together with the invariance of $\Sigma $ under the left
action of $\Delta $
imply that $|(\phi _u)_*(v_p)|$ is uniformly bounded and bounded away from zero
independently of $u\in [0,1]$, $p\in X$ and $v_p\in T_pX$ with $|v_p|=1$.
A straightforward iteration argument in the variable $u$
gives the estimate in item~(1) of Assertion~\ref{claim2}.

To prove item~(2), consider the function $\wt{f}\colon X \to \R $ defined by
\begin{equation}
\label{eq:5.2bis}
\wt{f}(p)=t \quad \mbox{ if }\quad p\in l_{h(t)}(\Sigma  ).
\end{equation}
Clearly, $\wt{f}$ is invariant under the
left action of $\Delta $ as each left translate of $\Sigma $ has the same property.
Take a point $p\in X$ and call $t=\wt{f}(p)$. Given $s\in \R $, $l_{h(s)}(p)\in
(l_{h(s)}\circ l_{h(t)}) (\Sigma )=
l_{h(s+t)}(\Sigma )$ because $t\mapsto h(t)$ is a group homomorphism. Thus,
\begin{equation}
\label{eq:5.2}
  \wt{f}\circ l_{h(s)}=s+\wt{f},\quad \forall s\in \R.
\end{equation}
A simple consequence of (\ref{eq:5.2}) is that
the gradient $\nabla  \wt{f}$ of $\wt{f}$ with respect to the metric $g $
satisfies $(\nabla \wt{f})\circ l_{h(s)}=(l_{h(s)})_*(\nabla \wt{f})$, i.e.,
$\nabla \wt{f}$ is invariant under the left action of $\G^H$.
$\nabla \wt{f}$ is also invariant under the left action of $\Delta $,
as $\wt{f}$ has the same property. Observe that $\nabla \wt{f}$ is orthogonal to
$l_{h(t)}(\Sigma  )$, hence
\begin{equation}
\label{eq:grdwtf}
\nabla \wt{f}=|\nabla  \wt{f}| \, N_{\cF}.
\end{equation}
Another simple consequence of (\ref{eq:5.2}) is that
$\nabla \wt{f}$ is nowhere zero on $X$ (take the derivative with respect to $s$).

As $\nabla  \wt{f} $ descends without zeros to the torus $[l_{h(t)}(\Sigma )]/\Delta $ for any $t$
, then $\nabla \wt{f}$ is bounded by above and below by some positive
constants in every leaf $l_{h(t)}(\Sigma )$ of $\cF$. Since $\nabla \wt{f}$
is also invariant under the action of $\G^H$, we deduce that
\begin{equation}
\label{eq:5.3}
C\leq |\nabla \wt{f}|\leq 1/C\quad \mbox{ in $X$, \quad for some $C\in (0,1)$.}
\end{equation}
Let
$(q,u)\in \Sigma \times [0,\infty )\mapsto t=t(q,u)$ be the smooth function such that
$\g _q(u)\in l_{h(t)}(\Sigma )$. Then, $t(q,u)=\wt{f}(\g _q(u))$ and
\[
\frac{\partial t}{\partial u}(q,u)=\frac{d}{du}(\wt{f}\circ \g _q)(u)=
g(\nabla \wt{f},\g _q'(u))(\g _q(u))
\]
\[
=g(\nabla \wt{f},N_{\cF})(\g _q(u))=|\nabla \wt{f}|(\g _q(u))\geq C ;
\]
hence after integration with respect to $u$,
\begin{equation}
\label{eq:t(q,u)}
t(q,u)\geq Cu-t(q,0)=Cu,\quad \mbox{for all $u\geq 0$, $q\in \Sigma $.}
\end{equation}
Now define $C_2=1/C$. Given $t_0\geq 1$, $q\in \Sigma $, taking $u=C_2t_0$ in~(\ref{eq:t(q,u)})
we have $t(q,C_2t_0)\geq t_0$, which means that $\g _q(C_2t_0)\in
\bigcup_{t\in [t_0,\infty)} l_{h(t)}(\Sigma )$, as desired. This finishes the
proof of Assertion~\ref{claim2}.
\end{proof}

 The fact that $\Sigma  $ is invariant under the left action of $\Delta $
 (by isometries of $g$) clearly implies the following statement.
\begin{assertion}
\label{claim3a}
Let $\sigma_1,\sigma_2$ be least-length simple
closed geodesics in the torus $\Sigma/\Delta$, that can be considered to be
generators of the first homology group of $\Sigma/\Delta$. Let $L_i>0$ be
the length of $\sigma _i$, $i=1,2$. For each
$n\in\N$, let $F(n)\subset\Sigma  $ be a 'square' collection of $n^2$
adjacent fundamental domains for the action of the group $\Delta $, so that
$\partial F(n)$ consists of four geodesic arcs in $\Sigma$, each of which
covers $n$ times one of the geodesics $\sigma_1,\sigma_2$. Then,
\begin{equation}\label{length}
\mathrm{Length}(\partial F(n),g)= C_3 n, \qquad
\mathrm{Area}(F(n),g)=C_4 n^2,
\end{equation}
where $C_3=\mbox{\rm Length}(\partial F(1))=2(L_1+L_2)$, and
$C_4=\mbox{\rm Area}(F(1))$.
\end{assertion}

\begin{assertion}
\label{claim4.6bis}
  In the above situation, $H =H(X )=\frac{1}{2}\mbox{\rm Ch}(X )$
  (and thus, $\Sigma $ satisfies condition (D) of Proposition~\ref{claim:3d}).
\end{assertion}
\begin{proof}

 Given a compact, orientable smooth surface $S$ immersed in $X $,
the existence of the foliation $\mathcal{F}=\cF (\Sigma ,\G^H) $ of $X$ by surfaces of constant mean
curvature $H >0$ and an
elementary comparison argument for the mean curvature shows that the maximum of the
absolute mean curvature function of $S$ (with respect to the metric
$g $) is at least $H $. Then, by definition of critical mean
curvature we have $H \leq H(X )$. By Lemma~\ref{lem2.4}, we have
$H(X )\leq \frac{1}{2}\mbox{Ch}(X )$; hence to finish the proof of
the Assertion it suffices to show that $\mbox{Ch}(X )\leq 2H $.
To do this, given any $\ve>0$ we will construct a piecewise smooth compact
domain $B$ of $X $ such that the ratio of the area of the boundary $\partial B$
to the volume of $B$ is bounded from above by $2H +\ve$. The inequality
Ch$(X )\leq 2H  +\ve$ will then follow from the definition of
Ch$(X )$. As $\ve>0$ is arbitrary in this construction, we will deduce the desired
estimate, thereby proving Assertion~\ref{claim4.6bis}. The construction of
the domain $B$ is motivated by a similar construction
for estimating the Cheeger constant of certain metric Lie groups
in the proof of the main theorem of Peyerimhoff and  Samiou in~\cite{pesa1};
also see  Section~3.8 in~\cite{mpe11} for these similar constructions.

For each $n\in\N$ and $t_0\geq 1$, consider the following domain:
\[
B(n,t_0)=\left(\bigcup_{t\in[0,t_0]} l_{h(t)}(\Sigma  )\right)
\bigcap\left(\bigcup_{u\in \R}\phi_u(F(n))\right).
\]
The boundary of the box-shaped solid region $B(n,t_0)$ consists of
the 'bottom square' face $F(n)$, the 'top square' face
$F^{\mbox{\footnotesize top}}(n,t_0)=[l_{h(t_0)}(\Sigma  )]\bigcap
[\bigcup_{u\in \R}\phi_u(F(n))]$ and the 'sides of the box', which is
the piecewise smooth surface
$S(n,t_0)=\partial B (n,t_0)- [F(n)\cup F^{\mbox{\footnotesize
top}}(n,t_0)]$. By item~(2) of Assertion~\ref{claim2}, $S(n,t_0)$ is contained in
the piecewise smooth surface
\[
\wt{S}(n,t_0)=\bigcup_{0\leq u\leq C_2t_0}\phi_u(\partial F(n)),
\]
see Figure~\ref{figbox}.
\begin{figure}
\begin{center}
\includegraphics[width=10cm]{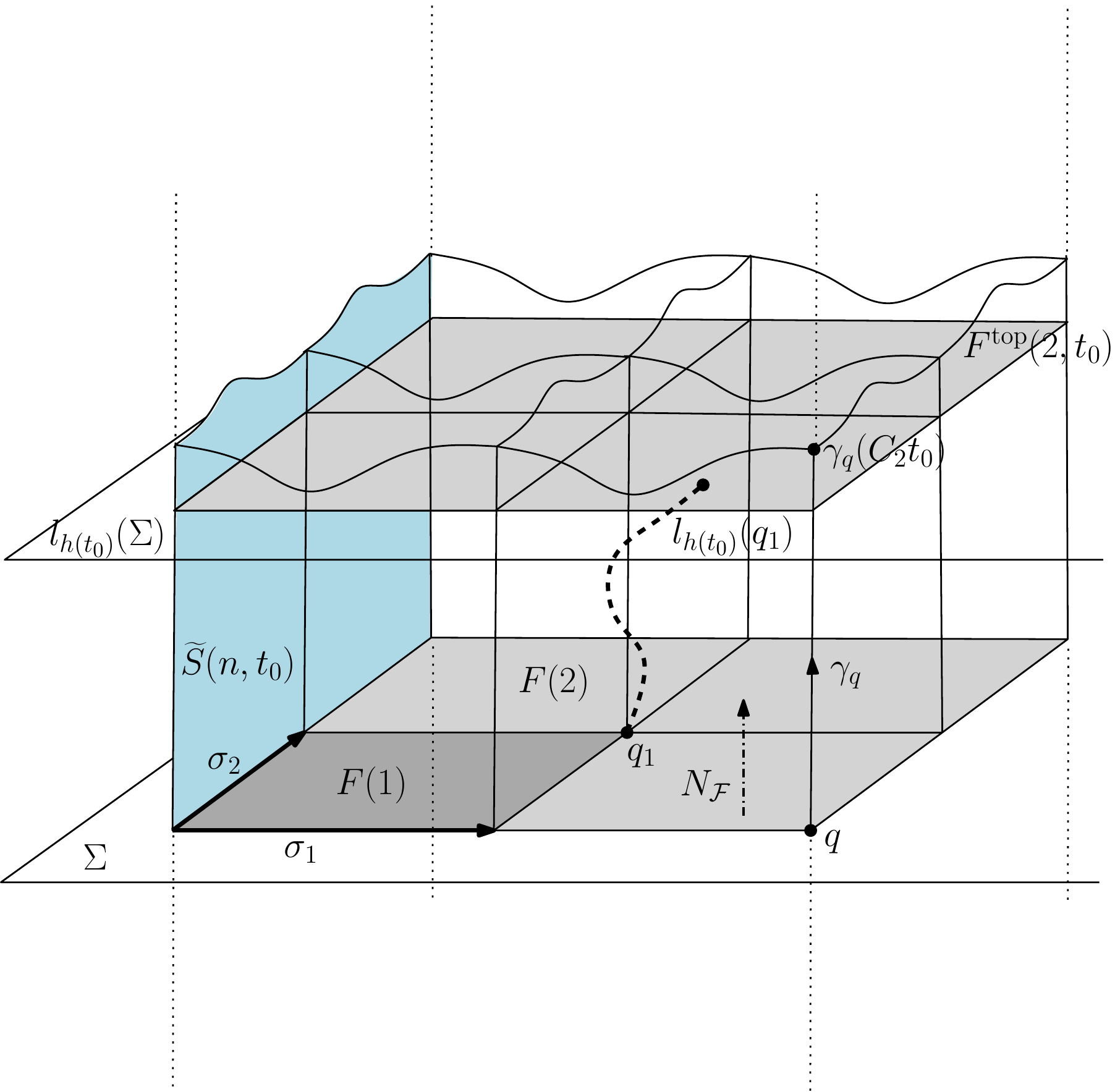}
\caption{Schematic representation of a 'box'-shaped domain $B(n,t_0)$ for $n=2$, obtained by
intersecting the 'slab' $\bigcup_{t\in[0,t_0]} l_{h(t)}(\Sigma  )\subset X$ bounded
by $\Sigma $ and its left translate $l_{h(t_0)}(\Sigma )$, with the vertical 'square
column' $\bigcup_{u\in \R}\phi_u(F(n))$ produced by moving the square $F(n)\subset \Sigma $
under the flow $\{ \phi _u\} _u$ of the unit normal vector field $N_{\cF }$. The bottom and top
faces $F(n)$ and $F^{\mbox{\footnotesize top}}(n,t_0)$ of $B(n,t_0)$ are represented in grey.
The side faces $S(n,t_0)$ of $B(n,t_0)$ are contained in larger side faces $\wt{S}(n,t_0)$
(in blue) obtained by flowing $\partial F(n)$ by $\phi _u$, $0\leq u\leq C_2t_0$. The dashed
curve represents the trajectory of a given point $q\in \Sigma $ under left multiplication
by elements in $\G^H$, until reaching $l_{h(t_0)}(\Sigma )$; this trajectory might differ
from the integral curve $\g _q$ of $N_{\cF }$.}
 \label{figbox}
\end{center}
\end{figure}

We next obtain an estimate for the area of the 'top' face
$F^{\mbox{\footnotesize top}}(n,t_0)$.
%
%
%
Since $\Sigma $ satisfies  the hypotheses of Lemma~\ref{claim4.3}-(2), then
Lemma~\ref{lem5.3bis}-(2) ensures that the area function (with respect to $g_W$) of the related quotient tori
$t\mapsto l_{h(t)}(\Sigma)/\Delta$ is exponentially decreasing as $t\to +\infty $. Therefore, by (\ref{length}),
there is some $C_5>0$ such that
\begin{equation} \label{top}
\mathrm{Area}(F^{\mbox{\footnotesize top}}(n,t_0))<C_5 n^2e^{-t_0}.
\end{equation}

We will also need an upper estimate for the area of the 'sides' of $B(n,t_0)$. To do this, consider the function
$\wt{h}\colon \wt{S}(n,t_0)\to [0,C_2t_0]$ given by $\wt{h}(\phi _u(q))=u$, for each $q\in \partial F(n)$.
As the gradient $\nabla \wt{h}$ is clearly orthogonal to $\phi _u(\partial F(n))$, then
\[
|\nabla \wt{h}|(\g _q(u))=g(\nabla \wt{h},\g _q'(u))=[\g _q'(u)](\wt{h})=\frac{d}{du}(\wt{h}\circ \g _q)=1,
\]
which by the coarea formula, gives that
\[
\mathrm{Area}({S}(n,t_0))\leq
\mathrm{Area}(\wt{S}(n,t_0) )
=\int _0^{C_2t_0}\left( \int _{\phi _u(\partial F(n))}ds_u\right) du
\]
\[
=\int _0^{C_2t_0}\mbox{length}\left(
\phi _u(\partial F(n)\right) du\stackrel{\mbox{\tiny (a)}}{\leq }
\int _0^{C_2t_0}C_1e^u \mbox{length}\left( \partial F(n)\right) du
\]
\begin{equation} \label{sides}
\stackrel{(\ref{length})}{\leq }C_1C_3n\int _0^{C_2t_0}e^u\, du
\leq
C_1e^{C_2t_0}\cdot C_3n,
\end{equation}
where $ds_u$ denotes the length element in $\phi _u(\partial F(n))$
and in (a) we have used the upper bound
for $|(\phi _u)_*(v_p)|$ in item~(1) of Assertion~\ref{claim2}.

We finally prove that for any $\ve>0$, there exists a $T_0\geq 1$,
such that for any $t_0\geq T_0,$ and $n$ sufficiently large, the
next inequality holds:
\begin{equation} \label{area-top}
\frac{\mathrm{Area}(\partial
B(n,t_0))}{\mathrm{Vol}(B(n,t_0))}\leq 2H +\ve.
\end{equation}
To see that this property holds, fix some $t_0\geq 1$ and apply the
Divergence Theorem to the vector
field $N_{\cF }$ in the domain $B(n,t_0)$:
\[
-2H \, \mbox{Vol}(B (n,t_0))=\int_{B(n,t_0)}
\div_X(N_{\cF})=\int_{\partial B(n,t_0)} g(N_{\cF}, \eta )
\]
\[
=\int_{F(n)} g(N_{\cF}, \eta ) +\int_{F^{\mbox{\tiny top}}(n,t_0)}
g( N_{\cF}, \eta )+\int_{{S}(n,t_0)} g(N_{\cF}, \eta ),
\]
where $\eta$ is the outward pointing unit normal to $B(n,t_0)$ along its boundary.
Observe that $\eta =-N_{\cF }$ along $F(n)$ and $\eta =N_{\cF}$ along
$F^{\mbox{\footnotesize top}}(n,t_0)$. As $g(N_{\cF},\eta ) \leq
1$, then we obtain
\begin{equation}
\label{claim:divergence}
-2H \, \mbox{Vol}(B (n,t_0)) \leq
- \mathrm{Area}(F(n))
+\mathrm{Area}(F^{\mbox{\footnotesize top}}(n,t_0)) +
\mathrm{Area}({S}(n,t_0))
\end{equation}
\[
 = -\mathrm{Area}(\partial B(n,t_0))
+2\mathrm{Area}(F^{\mbox{\footnotesize top}}(n,t_0)) +2 \mathrm{Area}({S}(n,t_0)),
\]
which we rewrite as
\begin{equation}
\label{div}
\frac{\mathrm{Area}(\partial B(n,t_0))}{\mathrm{Vol}(B(n,t_0))}\leq
2H  +\frac{2\mathrm{Area}(F^{\mbox{\footnotesize
top}}(n,t_0))}{\mathrm{Vol}(B(n,t_0))} +
\frac{2\mathrm{Area}({S}(n,t_0)) }{\mathrm{Vol}(B(n,t_0))}.
\end{equation}

Now,
\[
\frac{2\mathrm{Area}(F^{\mbox{\footnotesize top}}(n,t_0))}{\mathrm{Vol}(B(n,t_0))} =
\frac{2\mathrm{Area}(F^{\mbox{\footnotesize top}}(n,t_0))}{n^2\mathrm{Vol}(B(1,t_0))}
\stackrel{(\ref{top})}{<}
\frac{2C_5e^{-t_0}}{\mathrm{Vol}(B(1,t_0))}\leq
\frac{2C_5e^{-t_0}}{\mathrm{Vol}(B(1,1))},
\]
which can be made less than $\ve /2$ by taking $t_0\geq 1$ large
enough. For this value of $t_0$ fixed, we have
\begin{equation}  \label{div3}
\frac{2\mathrm{Area}({S}(n,t_0))}{\mathrm{Vol}(B(n,t_0))}
\stackrel{(\ref{sides})}{\leq }\frac{2\cdot C_1e^{C_2t_0}\cdot C_3n}
{n^2\mathrm{Vol}(B(1,t_0))},
\end{equation}
and so the left hand side of (\ref{div3}) can be also made less than
$\ve /2$ by taking $n$ sufficiently large.
With these two estimates, (\ref{div}) implies (\ref{area-top}), and the proof of Assertion~\ref{claim4.6bis} is
complete.
\end{proof}

To finish the proof of Lemma~\ref{lem:H}, it remains to prove that if $\Sigma $
satisfies the hypotheses of Lemma~\ref{lem:H}, then item~(E) in the statement
of Proposition~\ref{claim:3d} holds. As the volume of the end
$\mathcal{D}(0)=\bigcup _{t=0}^{\infty }[l_{h(t)}(\Sigma )/\Delta ]$
of $W$ is finite by Lemma~\ref{lem5.3bis}-(2)
and since for every $T\in \R$ the mean curvature vector
of $\parc \cD(T)$ points into $\cD(T)$ (Assertion~\ref{claim5.6}), we only need to prove the next assertion.

\begin{assertion}
\label{ass5.10}
Let $\Sigma \subset X$ be a surface satisfying the hypotheses of Lemma~\ref{lem:H}.
Given $T\in \R$, the domain $\mathcal{D}(T)$ defined in (\ref{end})
is the unique solution to the isoperimetric problem in $(W,g_W)$ for
its (finite) value of the enclosed volume.
\end{assertion}
\begin{proof}
 Given $T\in \R$, let $V(T)$ be the volume
of $\mathcal{D}(T)$ in $(W,g_W)$. By Assertion~\ref{claim4.6bis},
the unit normal field $N_{\cF /\Delta }$ to the foliation $\mathcal{F}/\Delta $
(with respect to the metric $g_W$) has divergence equal to the negative of twice the
mean curvature $H=H(X)$ of the leaves  of $\mathcal{F}/\Delta $. By
Assertion~\ref{claim5.6}, $N_{\cF /\Delta }$ restricted to $\parc \cD(T)$
 points into $\cD(T)$.  Consider a smooth, possibly non-compact domain $\Omega
\subset W$ with volume $V(T)$, $\Omega \neq \cD (T)$. Thus,
there is a small disk $D$ in the boundary $\partial \Omega$ and an $\ve>0$ such that
\begin{equation}
\label{eq:5.12}
-\int _{D }g_W(N_{\cF /\Delta },N_{\partial \Omega })\leq \mbox{Area}(D,g_W)-\ve,
\end{equation}
where $N_{\partial \Omega }$ is the outward pointing unit normal
vector field to $\Omega $ along its boundary.

Consider the smooth function $f\colon W\to \R $ given by
\begin{equation}
\label{eq:f}
f(x)=t \mbox{ \; provided that \;} x\in l_{h(t)}( \Sigma )/{\Delta },
\end{equation}
(compare with (\ref{eq:5.2bis})). By (\ref{eq:grdwtf}) and (\ref{eq:5.3}), the gradient of $f$ in $W$ satisfies
\begin{equation}
\label{eq:gradf}
\nabla f=|\nabla f|\, N_{\cF /\Delta },\quad
C \leq |\nabla f|\leq  1/C
\end{equation}
for some $C\in (0,1)$.
Since $\Omega$ has finite volume, the coarea formula can be applied to $f$ on $\Omega$ and
gives
\[
\mbox{Vol}(\Omega ,g_W)=
\int _{-\infty }^{\infty }\left( \int _{\Omega \cap [l_{h(t)}(\Sigma )/\Delta ]}\frac{1}{|\nabla  f|}dA_t\right) dt
\geq C\int _{-\infty }^{\infty }\mbox{Area}\left( \Omega \cap [l_{h(t)}(\Sigma )/\Delta ]\right) dt,
\]
where $dA_t$ stands for the area element of $\Omega \cap [l_{h(t)}(\Sigma )/\Delta ]$ with the induced metric by $g_W$.
Since $\Omega$ has finite volume, the last displayed formula implies that there exists a sequence
$\{ T_n\} _n\subset \R^+$ with $T_n\nearrow \infty $ and a
smooth compact increasing exhaustion
$W_n=\mathcal{D}(-T_n)-\Int(\mathcal{D}(T_n))$ of $W$ such that for
$\Omega_n=W_n\cap\Omega$ and for all $n$, $\mbox{Area}(\partial \Omega_n \cap
\partial W_n,g_W)\leq \frac{1}{n}$ and $D\subset W_1$.

Applying the Divergence
Theorem to $N_{\cF /\Delta }$ on $\Omega_n $ and  letting $N_{\partial \Omega _n}$ be
the outward pointing unit normal vector field to $\Omega _n$ along its piecewise smooth
boundary (note that $N_{\partial \Omega _n}=N_{\partial \Omega }$ on $\partial \Omega
\cap \mbox{Int}(W_n)$), we obtain
\begin{eqnarray}
2\, H(X)\, \mbox{Vol}(\Omega_n ,g_W)&=&
{\displaystyle - \int _{\partial \Omega_n }g_W(N_{\cF /\Delta },N_{\partial \Omega _n}) }
\nonumber \\
&=&
{\displaystyle - \int _{D}g_W(N_{\cF /\Delta },N_{\partial \Omega })- \int _{\partial \Omega_n -D}g_W(N_{\cF /\Delta },N_{\partial \Omega _n}) }
\nonumber\\
&\stackrel{(\ref{eq:5.12})}{\leq }& {\displaystyle \mbox{Area}(D,g_W)-\ve -\int _{\partial \Omega _n-D}g_W(N_{\cF /\Delta },N_{\partial \Omega _n}) }
\nonumber\\
\label{eq:1} & \leq & \mbox{Area}(\partial \Omega_n ,g_W) -\ve,
\end{eqnarray}

Taking limits as $n\to \infty$ and using that Area$(\partial \Omega _n\cap \partial W_n,g_W)\to 0$,
we have Area$(\partial \Omega _n,g_W)\to \mbox{Area}(\partial \Omega ,g_W)$ and thus,
\begin{equation}
\label{eq:1*}
2\, H(X)\, V(T)=2\, H(X)\, \mbox{Vol}(\Omega ,g_W)
\leq  \mbox{Area}(\partial \Omega ,g_W)-\ve .
\end{equation}
 Using a similar argument for the domain $\mathcal{D}(T)$, we get
\begin{equation} \label{eq:1**}
2\, H(X)\, V(T)= \mbox{Area}\left(
l_{h(T)}(\Sigma )/\Delta ,g_W\right)= \mbox{Area}\left( \partial
\cD(T))\right).
\end{equation}
Equations~(\ref{eq:1*}) and (\ref{eq:1**}) now complete the proof of Assertion~\ref{ass5.10}.
\end{proof}
\noindent
Finally, Assertions~\ref{claim4.6bis} and \ref{ass5.10} imply that Lemma~\ref{lem:H} holds.
\end{proof}

Recall that our main goal in this section is to prove Proposition~\ref{claim:3d}.
To do this, we consider the set
\[
\cA =\{ g \mbox{ left invariant metric on }\sl \ : \ \mbox{Proposition~\ref{claim:3d}  holds for }g\} .
\]
By using the identification in Proposition~\ref{metrics}, we can view $\cA $ as a subset of
$\cM =(\R^+)^3$. We will prove Proposition~\ref{claim:3d}
by showing that $\cA=\cM$. To do this, we only need to prove that
$\cA$ is non-empty, open and closed in the connected set $\cM\subset\R^3$, which will be the
purpose of the next three subsections.
By Lemma~\ref{lem:H}, $\cA $ can be identified with the set of triples $(\l_1,\l_2,\l_3)\in\cM$ for which there
exists a properly embedded surface $\Sigma$ satisfying the properties (A), (B) and (C)
of Proposition~\ref{claim:3d} on $(\sl,g)$, where $g$ is the left
invariant metric associated to $(\l_1,\l_2,\l_3)$.

\subsection{$\cA $ is non-empty.}

\begin{lemma}
\label{lem:exist}
Let $X$ be $\sl $ endowed with a left invariant metric corresponding to the list $(1,1,1)\in \cM$,
see Proposition~\ref{metrics}.
After scaling the metric,  $\Pi\colon X \to \HH^2$ is a Riemannian submersion and
the horocylinder $\Sigma _0=\Pi^{-1}(\alfa_0)$ that appears in Lemma~\ref{lem5.3}
satisfies properties (A), (B) and (C) of Proposition~\ref{claim:3d}.
In particular, by Lemma~\ref{lem:H} we deduce that $(1,1,1)\in \cA$.
\end{lemma}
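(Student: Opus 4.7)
\medskip
\noindent\textbf{Proof plan for Lemma~\ref{lem:exist}.}
The plan is to verify each of the three conditions (A), (B), (C) of Proposition~\ref{claim:3d} by exploiting the Riemannian submersion $\Pi\colon X\to\HH^2$ available in this particular metric and by reducing everything to two-dimensional statements about horocycles and orbits of hyperbolic isometries in $\HH^2$. The essential fact we will use repeatedly is that left translation by an element of a one-parameter subgroup $\G\subset \H^2_\theta$ of $\sl$ descends, via $\Pi$, to an isometry of $\HH^2$ fixing the point $\theta\in\partial_\infty\HH^2$, while left translation by $a_2\in \mathrm{center}(\sl)$ descends to the identity of PSL$(2,\R)$.

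For property (A), I would first observe that since $\G^P\subset \H^2_\theta$, the 1-parameter group of isometries $\{l_{\G^P(t)}\}_{t\in\R}$ of $X$ projects to the 1-parameter group of parabolic isometries of $\HH^2$ fixing $\theta$, which preserves every horocycle based at $\theta$ and in particular $\alpha_0$. Hence $l_{\G^P(t)}(\Sigma_0)=\Pi^{-1}(l_{\G^P(t)}(\alpha_0))=\Sigma_0$ for all $t$, which gives both $\G^P=\G^P\cdot e\subset\Sigma_0$ (because $\Pi(e)=\alpha_0(0)\in\alpha_0$) and the tangency of the generating Killing field $K^P$ to $\Sigma_0$.

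For property (B), the integral curve of $K^H$ through a point $p\in X$ is the orbit $\G^H\cdot p$, and its image under $\Pi$ is the orbit of $\Pi(p)$ under the hyperbolic 1-parameter subgroup of isometries of $\HH^2$ that translates along the geodesic $\Pi(\G^H)$. This orbit is either $\Pi(\G^H)$ itself or a hypercycle with the same pair of endpoints at infinity, one of them being $\theta$; in either case it meets the horocycle $\alpha_0$ (based at $\theta$) in exactly one point, and transversely, because at any point of $\alpha_0$ the tangent to the horocycle agrees (up to scale) with the parabolic direction, which is linearly independent from the hyperbolic direction. Finally, $\Pi$ restricted to $\G^H\cdot p$ is a diffeomorphism onto the $\HH^2$-orbit: the parameterization $t\mapsto \G^H(t)\cdot p$ is injective because $\G^H$ acts freely on $X$, and its projection $t\mapsto \G^H(t)\cdot\Pi(p)$ is injective because a hyperbolic 1-parameter subgroup acts freely on $\HH^2$. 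Combining these facts, $\G^H\cdot p$ meets $\Sigma_0=\Pi^{-1}(\alpha_0)$ in exactly one point, transversely.

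For property (C), since $a_2$ lies in the center of $\sl$, its image in PSL$(2,\R)$ is trivial, so $l_{a_2}$ preserves every fiber of $\Pi$ and in particular $l_{a_2}(\Sigma_0)=\Pi^{-1}(\alpha_0)=\Sigma_0$. Combined with the $\G^P$-invariance from (A), we can apply Lemma~\ref{claim4.3}(1) (taking $h=e$) to conclude that $\Delta$ acts properly and discontinuously on $\Sigma_0$, yielding the quotient torus $\Sigma_0/\Delta\subset (W,g_W)$. Having verified (A), (B), (C), the conclusion $(1,1,1)\in\cA$ is immediate from Lemma~\ref{lem:H}. The main potential obstacle is the single-intersection point in (B): one must rule out that a $\G^H$-orbit in $X$ hits the same $\Pi$-fiber more than once, which is precisely the content of the free action of $\G^H$ on $\HH^2$ noted above.
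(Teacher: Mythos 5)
Your verifications of (A), (B) and (C) are correct, and they give a more intrinsic derivation than the paper's own route. The paper cites Lemma~\ref{lem5.3}, whose proof transfers everything to the isometric picture of $X_0=(\sl,g_0)$ as a semidirect product $\R^2\rtimes_A\R$; you instead argue directly in $\sl$ via the equivariance $\Pi\circ l_a=\bar a\circ\Pi$, the fact that horocycles based at $\theta$ are preserved by parabolics fixing $\theta$, and the free action of $\G^H$ on both $\sl$ and its hypercycle orbits in $\HH^2$. In particular your observation that $\G^H\cap\ker(\sl\to\mathrm{PSL}(2,\R))=\{e\}$ is exactly what is needed to guarantee that an orbit $\G^H\cdot p$ cannot revisit a $\Pi$-fiber; this is the right place to worry.

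There is, however, a genuine gap at the very end. Lemma~\ref{lem:H} does not take as input merely a properly embedded surface satisfying (A)--(C); its hypothesis is a properly embedded surface of \emph{constant mean curvature} $H\geq 0$ satisfying (A)--(C). You never verify that $\Sigma_0$ has constant mean curvature, and this does not follow from (A)--(C) alone: the isometries of $X$ that you have exhibited preserving $\Sigma_0$, namely $\{l_a : a\in\G^P\}$ and $l_{a_2}$, commute (since $a_2$ is central) and generate a group isomorphic to $\R\times\Z$ whose orbits in the plane $\Sigma_0\cong\R^2$ are a one-parameter family of ``$\R\times\Z$'' subsets; the orbit space is roughly a circle, and nothing in (A)--(C) rules out the mean curvature varying along that circle. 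The constancy of the mean curvature of $\Sigma_0$ is precisely the content of Lemma~\ref{lemma4.3}, which applies because the metric $(1,1,1)$ makes the isometry group of $(\sl,g_0)$ four-dimensional and (after scaling) $\Pi$ a Riemannian submersion of constant bundle curvature; there one uses the extra one-parameter family of ``vertical'' isometries, absent for a generic left-invariant metric, to make $\Sigma_0$ a homogeneous surface. The paper's one-line proof cites Lemma~\ref{lemma4.3} for exactly this reason, and you need to do so as well before invoking Lemma~\ref{lem:H}.
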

\begin{proof}
This follows immediately from Lemmas~\ref{lemma4.3}, \ref{lem5.3} and~\ref{lem:H}.
\end{proof}

\subsection{Closedness of $\cA $.}
\mbox{}\\
We start with a simple consequence of equation (\ref{lengths}),
which we will need in the proof of the closedness of $\cA $.
 \begin{lemma}
 \label{obs}
Let $A$ be the matrix given in equation~(\ref{eq:Amatrix}).
\begin{enumerate}
\item Given $c,d\in \R $, the right invariant vector field
 $ c\, \partial_x+ d\, \partial_y$ of $\R^2\rtimes_A \R$
 is uniformly bounded in $\R^2\rtimes_A [0,\infty)$.
 \item Consider the map
 $\pi _3\colon \R^2\rtimes_A \R \to \R^2\rtimes_A \{0\}$ given by $\pi _3(x,y,z)=(x,y,0)$. Then,
 there exists $C>0$ such that for any point $p=(x,y,z)\in \R^2\rtimes _A[0,\infty )$ and
 any vector $v_p\in T_p(\R^2\rtimes _A\{ z\} )$, the inequality $\|(\pi _3)_*(v_p)\| > C\|v_p\|$ holds.
 \end{enumerate}
 \end{lemma}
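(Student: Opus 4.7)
The plan is to reduce both assertions to a single algebraic identity coming from the explicit formula~(\ref{lengths}) for the canonical metric on $\R^2\rtimes_A\R$. Setting $u:=e^{-2z}$ and expanding using $\|\partial_x\|^2 = u^2 + (u-1)^2$, $\|\partial_y\|^2 = 1$, and $\langle\partial_x,\partial_y\rangle = u-1$, one obtains after completing the square
\[
\|c\,\partial_x + d\,\partial_y\|^2 \;=\; c^2 u^2 + \bigl[c(u-1)+d\bigr]^2
\]
for all $c,d\in\R$ and all $z\in\R$. The key point is that on $\R^2\rtimes_A[0,\infty)$ the parameter $u$ lies in the compact interval $(0,1]$, so the right-hand side is uniformly controlled there.

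For part~(1), I would use the elementary bounds $u^2\leq 1$ and $[c(u-1)+d]^2 \leq (|c|+|d|)^2$ valid on $u\in[0,1]$ to conclude
\[
\|c\,\partial_x + d\,\partial_y\|^2 \;\leq\; c^2 + (|c|+|d|)^2
\]
at every point of $\R^2\rtimes_A[0,\infty)$, which is the desired uniform bound.

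For part~(2), any $v_p\in T_p(\R^2\rtimes_A\{z\})$ has the form $v_p = a\,\partial_x + b\,\partial_y$ at $p=(x,y,z)$, and the displayed identity gives $\|v_p\|^2 = a^2u^2 + [a(u-1)+b]^2$. Since $\pi_3(x,y,z)=(x,y,0)$, its differential sends $v_p$ to $a\,\partial_x + b\,\partial_y$ evaluated at $(x,y,0)$, where $u=1$; hence $\|(\pi_3)_*(v_p)\|^2 = a^2+b^2$. The inequality $[a(u-1)+b]^2 \leq 2a^2 + 2b^2$ for $u\in[0,1]$ then yields
\[
\|v_p\|^2 \;\leq\; 3(a^2+b^2) \;=\; 3\,\|(\pi_3)_*(v_p)\|^2,
\]
so any constant $C<1/\sqrt 3$ produces the desired strict inequality for nonzero $v_p$.

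There is no substantial obstacle: both items reduce to linear-algebra manipulations with the explicit metric~(\ref{lengths}). Conceptually, the content of the lemma is simply that $\pi_3$ restricted to each slice $\R^2\rtimes_A\{z\}$ with $z\geq 0$ is a quasi-isometry whose distortion constant is independent of $z$, which is the case precisely because $u=e^{-2z}$ ranges over the compact set $(0,1]$ on this half-line.
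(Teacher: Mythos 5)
Your proof is correct and follows exactly the route the paper intends (the paper states the lemma without proof, as an immediate consequence of the explicit formulas in equation~(\ref{lengths})): computing $e^{zA}$ for the given $A$, completing the square in $u=e^{-2z}$, and noting that $u\in(0,1]$ on the half-space $z\geq 0$ gives both uniform bounds at once. (Minor slip: $(0,1]$ is of course not compact, but only the bounds $u\leq 1$ and $|u-1|\leq 1$ are used, so the argument is unaffected.)
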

 \vspace{.2cm}

Now fix the left invariant metric $g_0$ corresponding to $(1,1,1)\in \cM$. We will use the properties of
$X_0=(\sl ,g_0)$ explained in the proofs of Lemma~\ref{lem5.3} and~\ref{lemNhoroc}, as well
as those of the foliations $\cF _0=\cF (\Sigma _0,\G^H)$ of $X_0$
by flat horocylinders defined in (\ref{horocfol})
and $\cF _0/\Delta $ of $W_0=(W,(g_0)_W)=(\sl /\Delta ,g_0/\Delta )$ by flat tori.
Let $N_{\cF _0}$, $N_{\cF _0/\Delta }$
be the corresponding unit normal vector fields to $\cF_0$, $\cF _0/\Delta $.
Recall that in the proof of Lemma~\ref{lemNhoroc} we identified isometrically $X_0$ with
the space $\R^2\rtimes _A\R $ appearing in Lemma~\ref{obs}, and saw that the unit normal
vector field to the foliation $\cF_0$ identifies with $\partial _z$ in
$\R^2\rtimes _A\R $.

Given  $g\in \cA$, by definition of $\cA $ the related space $X=( \sl, g)$ admits a properly embedded surface $\Sigma$
of constant mean curvature $H(X)$ satisfying properties (A)-\ldots -(E) of Proposition~\ref{claim:3d}.
Let
\[
\cF =\cF(\Sigma ,\G^H)=\{ l_h(\Sigma) \ | \ h \in \G^H\}
\]
be the related foliation of $X$, which induces by Lemma~\ref{claim4.3}
a quotient foliation $\cF/\Delta =\{ l_h(\Sigma )/\Delta \ | \ h\in \G^H\} $
of $(W,g_W)$ by constant mean curvature tori.
In the next lemma, we will consider the foliation $\cF/\Delta$ as a foliation
of the Riemannian manifold $W_0$.

\begin{lemma}
\label{c:graph}
Recall from Lemma~\ref{lem5.3bis} that the end $E_W$ of $(W,g_W)$ of finite volume
has an end representative of the type
$\cD (0)=\bigcup_{t\in [0,\infty)} [l_{h(t)}(\Sigma _0)/\Delta ],$ where $h\colon \R \to
\G^H$ is the group homomorphism given by (\ref{eq:h}). Given $g\in \cA$, the following
properties hold:
\ben
\item Each integral curve of $N_{{\cF}_0/ \Delta}$ intersects any leaf $L'$ of $\cF /\Delta $
transversely in a single point and $L'$ can be considered to be
the $(g_0)_W$-normal graph of a smooth, real-valued function defined on $\Sigma _0/\Delta $.
In particular, the function $x\in L'\mapsto \sphericalangle _{L'}(x)=(g_0)_x(N_{L'}^0(x),N_{\cF _0/\Delta }(x))$,
has a constant sign on $L'$, say positive, where $N_{L'}^0$ is a unit normal vector
to $L'$ with respect to the metric $g_0$.

\item Given a leaf $L'$ of $\cF /\Delta $, let $\ve_{L'}>0$ be the minimum in $L'$
of the function $\sphericalangle _{L'}$ (which exists since $\sphericalangle _{L'}$ is
continuous on the compact surface $L'$). Then, $\ve_{L'}=\ve_{L''}$ for every leaf $L''$ of $\cF /\Delta $.

\item Let $L$ be the unique torus leaf of $\cF/\Delta $ contained in $\cD (0)$ which has non-empty
intersection with $\partial \cD (0)=\Sigma _0/\Delta$, and let $\xi \colon \Sigma _0/\Delta \to [0,\infty)$ be
the smooth function that expresses the leaf $L$ as a $(g_0)_W$-normal graph over $\Sigma _0/\Delta $
given by item~(1) of this lemma. If we define
\[
a=\max _{x\in \Sigma _0/\Delta }\| \nabla _0\xi \| _0
\]
where the subindex $\bullet _0$ means that the corresponding object is computed with respect to $(g_0)_W$,
then for all $x\in \Sigma _0/\Delta $ we have
\begin{equation}
\label{eq:bdxi}
0\leq \xi (x)\leq a\cdot \mbox{\rm diameter}(\Sigma _0/\Delta ,(g_0)_W).
\end{equation}
\een
\end{lemma}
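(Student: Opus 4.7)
The plan is to address the three items in order, with item~(1) carrying the main technical content.

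For item~(1), I would first establish transversality of $N_{\cF_0}$ to every leaf of $\cF$. By Lemma~\ref{lemNhoroc}, $N_{\cF_0}=K^H+f\,K^P$ for a smooth function $f$. Property~(A) of Proposition~\ref{claim:3d} states that $K^P$ is tangent to $\Sigma$, and since $\cF$ is generated by $\G^H$-translates of $\Sigma$ and $K^P$ is invariant under these translations, $K^P$ is tangent to every leaf of $\cF$. Hence $N_{\cF_0}\in T_pL$ if and only if $K^H\in T_pL$, which is ruled out by property~(B). Descending to $W$, the field $N_{\cF_0/\Delta}$ is nowhere tangent to any leaf $L'$ of $\cF/\Delta$, so $\sphericalangle_{L'}$ is continuous and nowhere vanishing on the connected torus $L'$; hence it has constant sign.

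To prove the graph property, I would work in the $\R^2\rtimes_A\R$-model of Lemma~\ref{lemNhoroc}, in which $N_{\cF_0}=\partial_z$ and its flow trivializes $X$ as $\Sigma_0\times\R$. The key observation is that, by Lemma~\ref{lem5.3} and Lemma~\ref{claim4.3}(2) applied to $\Sigma$, both $\cF_0/\Delta$ and $\cF/\Delta$ are product foliations of $W$, so the inclusions $\Sigma_0/\Delta\hookrightarrow W$ and $L'\hookrightarrow W$ each induce an isomorphism on $H_1$. Define $\overline{\pi}\colon L'\to \Sigma_0/\Delta$ by following the flow $\phi^0$ of $N_{\cF_0/\Delta}$ until hitting $\Sigma_0/\Delta$; this is a local diffeomorphism (by the transversality just established) between compact tori, hence a covering map. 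The homotopy $H(p,t)=\phi^0_{tu(p)}(p)$ shows that the inclusion $L'\hookrightarrow W$ is homotopic to the composition $L'\xrightarrow{\overline{\pi}}\Sigma_0/\Delta\hookrightarrow W$. Since both inclusions induce isomorphisms on $H_1$, so does $\overline{\pi}_*$, forcing the covering $\overline{\pi}$ to have degree one, so $\overline{\pi}$ is a diffeomorphism. This is the step I expect to be the main obstacle, and the crucial input is precisely the simultaneous product structure of both foliations.

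For item~(2), every left translation on $\sl$ is an isometry of \emph{every} left invariant metric, hence $l_{h(t_2-t_1)}\in\G^H$ is simultaneously an isometry of $g$ and of $g_0$. Since $\cF$ and $\cF_0$ are both $\G^H$-invariant, $l_{h(t_2-t_1)}$ sends $L'$ isometrically onto $L''$ and carries the $g_0$-unit normal to $L'$ and the field $N_{\cF_0/\Delta}$ (up to globally constant signs) to their analogues on $L''$. Consequently $\sphericalangle_{L''}\circ l_{h(t_2-t_1)}=\sphericalangle_{L'}$, so their minima coincide, i.e., $\ve_{L'}=\ve_{L''}$.

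For item~(3), the hypothesis $L\cap\partial\cD(0)\neq\emptyset$ produces $x_0\in\Sigma_0/\Delta$ with $\xi(x_0)=0$, and choosing the sign of $N_{\cF_0/\Delta}$ so that the flow points into $\cD(0)$ makes the containment $L\subset\cD(0)$ equivalent to $\xi\geq 0$. For the upper bound, the mean value inequality along a minimising $(g_0)_W$-geodesic $\gamma$ in $\Sigma_0/\Delta$ from $x_0$ to $x$ gives
\[
\xi(x)=\xi(x)-\xi(x_0)\leq a\cdot\mathrm{length}(\gamma)\leq a\cdot\mathrm{diameter}(\Sigma_0/\Delta,(g_0)_W),
\]
which is precisely (\ref{eq:bdxi}).
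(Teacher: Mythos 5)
Your overall strategy is sound and the transversality and graph arguments for item~(1) are correct; they spell out, via covering-space theory, what the paper dispatches with the phrase ``this local graphing property is actually global'' (a monodromy/periodicity remark). In particular, your identification $N_{\cF_0}=K^H+fK^P$, the observation that $K^P$ is tangent to every leaf while $K^H$ is transverse by property~(B), and the homotopy/degree-one argument are all valid; the crucial input you correctly isolate is that $\cF_0/\Delta$ and $\cF/\Delta$ are both product foliations of $W\cong T^2\times\R$, so each cross-section inclusion is an isomorphism on $H_1$. Item~(3) matches the paper's integration argument.

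There is a small imprecision in item~(2): you assert that $l_{h(t_2-t_1)}$ ``sends $L'$ isometrically onto $L''$'' inside $W$. However, $l_{h(t)}$ does \emph{not} descend to $W=\sl/\Delta$, because $\G^H$ does not normalize $\Delta$: conjugation by $h(t)$ scales the parabolic parameter (i.e., $h(t)a_1h(t)^{-1}$ lies in $\G^P$ but generically not in the cyclic subgroup generated by $a_1$). So there is no induced isometry of $W$ carrying $L'$ to $L''$. The fix is exactly what the paper does implicitly: define the angle function $\alpha(p)=(g_0)_p(N^0_{L_p},N_{\cF_0})$ upstairs on $X$, note that it is invariant under left translations by $\G^H$ (both $\cF$ and $\cF_0$ are $\G^H$-invariant and every left translation is a $g_0$-isometry) and under $\Delta$, and then observe that $\ve_{L'}=\inf_{l_{h(t_1)}(\Sigma)}\alpha=\inf_{l_{h(t_2)}(\Sigma)}\alpha=\ve_{L''}$ by $\G^H$-invariance, the infimum being attained by $\Delta$-invariance plus compactness of the quotient tori. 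Your statement that the two minima coincide is true, but the justification must be phrased on $X$ rather than via a nonexistent isometry of $W$.
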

\begin{proof}
As $g\in \cA$, item~(B) of Proposition~\ref{claim:3d} gives that  each
integral curve of $K^H$  intersects each leaf of $\cF$ transversely in a single point.
Furthermore, $K^P$ is everywhere tangent to the leaves of $\cF$, by item~(2) of Lemma~\ref{claim4.3}.
These two properties together with equation (\ref{eq:5.19}) imply that $N_{\cF _0}$ is nowhere
tangent to any leaf of $\cF $, from where one deduces that
each leaf  of $\cF$ can be locally represented as the $(g_0)$-normal graph of a smooth function
defined on an open set of $\Sigma _0$. As the leaves of $\cF $ are topological planes and they have the same double
 periodicity as $\Sigma _0$, then this local graphing property is actually global. This proves item~(1) of the lemma.

As for item~(2), since $N_{\cF_0}$ is invariant under the left actions of $\G^H$ and $\Delta $, and the same
holds for the foliation $\cF $, a simple compactness argument shows that the minimum angle in $W_0$
between each leaf of $\cF /\Delta $ and integral curves of $N_{\cF _0/\Delta }$ is
the same and bounded from below by some positive number,
which gives item~(2).

Finally, item~(3) is a simple integration argument: given $x_1,x_2\in \Sigma _0/\Delta $,
\[
\xi (x_2)-\xi (x_1)=\int _{\g }\| \nabla _0\xi \| _0\leq a\cdot \mbox{length}(\g ,(g_0)_W)
\]
 for every
piecewise smooth curve $\g \subset \Sigma _0/\Delta $ joining $x_1,x_2$, from where
(\ref{eq:bdxi}) follows directly. Now the proof is complete.
\end{proof}

Let $\{g_n\}_n\subset \cA$ be a sequence of left invariant metrics which converges to a
metric $g_{\infty}\in \cM$. Our goal is to prove that $g_{\infty }\in \cA$.
Let $X_n=(\sl ,g_n)$ and $X_{\infty }=(\sl ,g_{\infty })$ be the
corresponding homogeneous Riemannian manifolds. As $g_n\in \cA$, we have related
foliations $\cF _n$ of $W_n=(W,(g_n)_W)$ with leaves of constant mean curvature $H(X_n)$.
By item~(2) of Lemma~\ref{c:graph}, for each $n\in \N$ the angle functions in $W_0$
between integral curves of $N_{{\cF }_0/\Delta }$ and leaves of $\cF _n$
are bounded from below by some $\ve _n>0$. By item~(3) of the same Lemma~\ref{c:graph},
for every $n\in \N$ we have a unique torus leaf $L_n$ of $\cF_n/\Delta $ contained in $\cD (0)$
such that $L_n\cap \partial \cD (0)\neq \mbox{\O }$, and $L_n$ can be expressed as the
$(g_0)_W$-normal graph of a smooth function $\xi _n\colon \Sigma _0/\Delta \to [0,\infty)$.
We next analyze when one can take limits on these objects.

\begin{lemma}
\label{lemma5.16}
In the above situation, suppose that the sequence $\{ \ve_n\} _n$ is bounded away from
zero. Then, a subsequence of the graphing functions $\xi _n\colon \Sigma _0/\Delta \to [0,\infty)$
converges smoothly on $\Sigma _0/\Delta $ to a smooth function
$\xi _{\infty}\colon \Sigma _0/\Delta \to [0,\infty)$, and the $(g_0)_W$-normal of $\xi _{\infty }$
over $\Sigma _0/\Delta $ defines a surface $L_{\infty }$ of constant mean curvature $H(X_{\infty })$
in $(W,(g_{\infty })_W)$, whose lifting $\wt{L}$ to $\sl $ of $L$ through the projection $\pi _W$ defined in (\ref{eq:piW}) produces a properly embedded surface that satisfies the hypotheses of Lemma~\ref{lem:H}.
\end{lemma}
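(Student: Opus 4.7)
My plan is to use standard compactness for quasilinear elliptic equations to extract a smooth sub-sequential limit of the graphing functions $\xi_n$, and then to verify that the lifted limit surface satisfies hypotheses~(A), (B) and~(C) of Proposition~\ref{claim:3d}; Lemma~\ref{lem:H} will then force the limit's mean curvature to equal $H(X_\infty)$. The first step is a uniform $C^1$ estimate for the sequence $\{\xi_n\}_n$: the hypothesis $\ve_n\geq \ve_0>0$ controls the angle function $\sphericalangle_{L_n}$ pointwise, and a direct computation with the $(g_0)_W$-normal graph representation gives the gradient bound $\|\nabla_0\xi_n\|_0\leq \sqrt{\ve_0^{-2}-1}$. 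Combining this with item~(3) of Lemma~\ref{c:graph} yields a uniform $C^0$ bound $0\leq\xi_n\leq C$.

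Next, the CMC condition for $L_n$ in $(W,(g_n)_W)$ translates into a quasilinear elliptic equation $Q_n(\xi_n)=2H_n$ on the compact torus $\Sigma_0/\Delta$, where $Q_n$ is the $(g_0)_W$-normal-graph mean curvature operator computed in the ambient metric $(g_n)_W$ and $H_n$ is the constant mean curvature of $L_n$; this equation is uniformly elliptic along the sequence $\{\xi_n\}_n$ thanks to the $C^1$ bound. The coefficients of $Q_n$ depend smoothly on the base point and on $g_n$, hence converge smoothly to those of $Q_\infty$, and the constants $H_n$ are uniformly bounded (from the $C^1$ bound together with standard local curvature estimates for CMC surfaces), so after passing to a subsequence $H_n\to H_\infty$. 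Lifting to the universal cover $\Sigma_0$ where coefficients are periodic under the lattice corresponding to $\Delta$, applying interior Schauder estimates, and pushing back down, I obtain uniform $C^{k,\alpha}$ bounds on $\xi_n$ for every $k$. Arzelà--Ascoli and a diagonal argument then produce a subsequence converging smoothly to a smooth $\xi_\infty\colon \Sigma_0/\Delta \to [0,\infty)$ satisfying $Q_\infty(\xi_\infty)=2 H_\infty$; the $(g_0)_W$-normal graph $L_\infty$ of $\xi_\infty$ thus has constant mean curvature $H_\infty$ in $(W,(g_\infty)_W)$.

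It remains to lift $L_\infty$ via $\pi_W$ to $\wt L\subset\sl$ and verify, after a possible left translation by an element of $\G^H$ so that $e\in \wt L$, that $\wt L$ satisfies (A), (B) and~(C) of Proposition~\ref{claim:3d}. Invariance of each $\wt L_n=l_{h(t_n)}(\Sigma_n)$ under the left action of $\G^P$ and under $l_{a_2}$ (item~(1) of Lemma~\ref{claim4.3}) passes to the smooth limit, giving (A) and~(C). For the transversality condition in~(B), Lemma~\ref{lemNhoroc} expresses $K^H$ as $N_{\cF_0}$ plus a term tangent to the leaves of $\cF_0$, so the pointwise bound $\sphericalangle_{L_n}\geq \ve_0$ implies a uniform positive lower bound on the angle between $K^H$ and $L_n$; this passes to the smooth limit, so every integral curve of $K^H$ meets $\wt L$ transversely, and the fact that each $\wt L_n$ is an entire single-sheeted graph over these integral curves (combined with properness) transfers to $\wt L$. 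With (A)--(C) verified, Lemma~\ref{lem:H} forces $H_\infty=H(X_\infty)$.

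The principal technical point to get right is the uniform ellipticity and smooth convergence of coefficients of $Q_n$ on the compact torus $\Sigma_0/\Delta$; related to this is the need to translate the angle bound $\sphericalangle_{L_n}\geq \ve_0$ into a uniform lower bound on the angle between $K^H$ and $\wt L_n$, for which Lemma~\ref{lemNhoroc} is essential. Once these PDE and geometric setups are in place, the compactness and limit-verification steps reduce to standard elliptic theory on a compact torus and to direct consequences of the earlier lemmas.
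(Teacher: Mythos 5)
Your proof follows essentially the same architecture as the paper's: convert the angle bound into a uniform $C^1$ bound for the graphing functions, use item~(3) of Lemma~\ref{c:graph} for the $C^0$ bound, extract a smooth subsequential limit via elliptic regularity and Arzel\`a--Ascoli, then verify properties (A)--(C) and invoke Lemma~\ref{lem:H}. One small imprecision worth flagging: the formula $\|\nabla_0\xi_n\|_0\leq\sqrt{\ve_0^{-2}-1}$ is what a direct computation gives for the gradient measured in the metric induced on the slice at height $z=\xi_n(p)$, not in the metric of $\Sigma_0/\Delta$ at $z=0$ where $\|\cdot\|_0$ lives; the paper invokes Lemma~\ref{obs} and equation~(\ref{lengths}) precisely to supply the uniform comparison constant between those two (this is also where $\xi_n\geq 0$ is used, since for $z<0$ the coordinate metric coefficients blow up). The corrected statement is $\|\nabla_0\xi_n\|_0\leq C\sqrt{\ve_0^{-2}-1}$ for a universal $C$, which is all that is needed, so this does not change your argument.
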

\begin{proof}
Lemma~\ref{obs} and the fact that $N_{\cF _0}$ identifies with $\partial _z$ (see the first paragraph after the
statement of Lemma~\ref{obs}) imply that giving a upper bound for $\| \nabla _0\xi _n\| _0$
is equivalent to giving a bound by below for the corresponding angle function $\sphericalangle _{L_n}$
defined in item~(1) of Lemma~\ref{c:graph}. Moreover, if the angle functions $\sphericalangle _{L_n}$ are uniformly
bounded away from zero, then (\ref{lengths}) implies that $\{ \| \nabla _0\xi _n\| _0\} _n$ is uniformly bounded from above in
$\Sigma _0/\Delta $. As (\ref{eq:bdxi}) gives a uniform bound for $\{ \xi _n\} _n$, then
a standard argument based on the Arzel\'a-Ascoli theorem and regularity results in elliptic theory
produces a subsequence of $\{ \xi _n\} _n$ that converges smoothly on $\Sigma _0/\Delta $ to a
smooth function $\xi _{\infty}\colon \Sigma _0/\Delta \to [0,\infty)$ whose $(g_0)_W$-normal graph
over $\Sigma _0/\Delta $ defines a surface $L_{\infty }$ of constant mean curvature $H=\lim _nH(X_n)$
in $(W,(g_{\infty })_W)$. Finally, the lifting $\wt{L}$ to $\sl $ of $L$ through $\pi _W$ is
a properly embedded surface that satisfies the hypotheses of Lemma~\ref{lem:H} (properties
(A), (C) of Proposition~\ref{claim:3d} are preserved under smooth limits, and property (B) of
Proposition~\ref{claim:3d} holds by construction). Therefore, Lemma~\ref{lem:H} implies that
$H=H(X_{\infty })=\frac12 \mbox{Ch}(X_{\infty })$ and the proof is complete.
\end{proof}

\begin{lemma}
\label{lem:closed}
$\cA$ is a closed subset of $\cM$.
\end{lemma}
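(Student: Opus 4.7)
The plan is to take a sequence $\{g_n\}_n \subset \cA$ converging to $g_\infty \in \cM$ and to produce, via Lemma~\ref{lemma5.16}, a surface in $X_\infty = (\sl, g_\infty)$ fulfilling the hypotheses of Lemma~\ref{lem:H}; this would yield $g_\infty \in \cA$. For each $n$, by the definition of $\cA$ together with Lemma~\ref{lem:H}, there exists a properly embedded surface $\Sigma_n \subset X_n = (\sl, g_n)$ satisfying properties (A)--(E) of Proposition~\ref{claim:3d}. The quotient $\Sigma_n/\Delta \subset W_n = (W, (g_n)_W)$ is a constant mean curvature torus of mean curvature $H(X_n) = \tfrac12\mathrm{Ch}(X_n)$, and it is stable, being the boundary of the isoperimetric domain $\cD_n(0)$ in $W_n$ by property~(E).

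The first step is to establish uniform geometric bounds. Since $g_n \to g_\infty$ in $\cM$, the metrics $g_n$ are mutually bi-Lipschitz to the reference metric $g_0$ with constants tending to those of $g_\infty$, so the ambient geometry of $X_n$ is uniformly comparable to that of $X_0$ on any compact set. Combined with the boundedness of the Cheeger constants $\mathrm{Ch}(X_n)$, and hence of $H(X_n)$, curvature estimates for stable constant mean curvature surfaces in ambient manifolds of bounded geometry furnish a uniform bound $\|A_{\Sigma_n}\|_{g_0} \leq C$ on the second fundamental forms of the $\Sigma_n$.

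Second, in order to apply Lemma~\ref{lemma5.16} I need to verify the angle hypothesis $\liminf_n \varepsilon_n > 0$. Suppose for contradiction that $\varepsilon_n \to 0$. Equivalently, for the graphing function $\xi_n$ of Lemma~\ref{c:graph}(3) expressing the leaf $L_n$ of $\cF_n/\Delta$ closest to $\partial \cD(0)$ as a $(g_0)_W$-normal graph over $\Sigma_0/\Delta$, the quantity $a_n = \max \|\nabla_0 \xi_n\|_0$ diverges. Using the uniform curvature bound and suitable left translations by elements of $\G^P$, $\G^H$ and $\Delta$ that carry a point of near-tangency into a fixed compact region of $\sl$, standard Arzel\`{a}--Ascoli arguments together with elliptic regularity for the prescribed mean curvature equation (with metric $g_n \to g_\infty$) produce a smooth subsequential limit $\Sigma^\star \subset X_\infty$, a properly embedded constant mean curvature surface tangent to the vector field $N_{\cF_0}$ at some point $p^\star$.

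The main obstacle is turning this tangency into a genuine contradiction. Since each $\Sigma_n$ is invariant under the left actions of $\G^P$ and $l_{a_2}$, these invariances pass to the smooth limit $\Sigma^\star$, so $K^P$ is everywhere tangent to $\Sigma^\star$. By Lemma~\ref{lemNhoroc}, $N_{\cF_0} = K^H + f K^P$ in $X_0$; the tangency to $N_{\cF_0}$ combined with the tangency of $K^P$ then forces $K^H$ to be tangent to $\Sigma^\star$ at $p^\star$. On the other hand, by property~(B) each $\Sigma_n$ is transverse to $K^H$ and, using the uniform second fundamental form bound to rule out the appearance of a ``vertical'' cylindrical piece in the limit together with a maximum principle comparison of $\Sigma^\star$ with its $\G^H$-translates, one shows that $\Sigma^\star$ must remain globally transverse to $K^H$. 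This contradicts the tangency at $p^\star$, so $\{\varepsilon_n\}_n$ is bounded away from zero. Lemma~\ref{lemma5.16} then yields a properly embedded surface in $X_\infty$ satisfying the hypotheses of Lemma~\ref{lem:H}, which gives $g_\infty \in \cA$ and proves that $\cA$ is closed in $\cM$.
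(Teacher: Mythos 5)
Your overall architecture matches the paper's proof up through the construction of a smooth subsequential limit surface tangent to $N_{\cF_0}$ (hence tangent to $K^H$, since $K^P$ is also tangent) at a point $p^\star$. The break is in the final step.

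You claim that property~(B) plus the uniform second fundamental form bound and ``a maximum principle comparison of $\Sigma^\star$ with its $\G^H$-translates'' force $\Sigma^\star$ to remain globally transverse to $K^H$, and that this contradicts the tangency at $p^\star$. This does not work as stated, for two reasons. First, transversality is an open but not closed condition: a sequence of surfaces that are entire $K^H$-graphs can perfectly well converge smoothly to a limit that is tangent to $K^H$ at a point. Indeed your own construction has \emph{forced} the angle of $L(p_n)$ with $N_{\cF_0/\Delta}$ to go to zero, so the limit is tangent to $K^H$ at $p^\star$ \emph{by design}; asserting that the limit should nevertheless be globally transverse is simply contradicting your own construction rather than deriving a contradiction from a hypothesis. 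Second, the maximum-principle comparison you gesture at needs a one-sided condition which you have not established.

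The paper's actual resolution goes in precisely the opposite direction: it does \emph{not} try to rescue transversality of the limit, but instead shows that the single tangency at $p^\star$ propagates globally. Because $K^H$ is a Killing field of $(\sl , g_\infty)$, the function $u=(g_\infty)_W(K^H,N_{L_\infty})$ is a Jacobi function on the limit torus $L_\infty$. Item~(1) of Lemma~\ref{c:graph} applied to the approximating leaves $L(p_n)$ gives that the angle function $(g_\infty)_W(N_{\cF_0/\Delta},N_{L_\infty})$ does not change sign, and by Lemma~\ref{lemNhoroc} this equals $u$ (since $K^P$ is tangent to $L_\infty$). So $u\geq 0$ with $u(p_\infty)=0$, and the maximum principle for the Jacobi (second-order linear elliptic) equation forces $u\equiv 0$. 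Then $K^P$ and $K^H$ are both everywhere tangent to the lift $\wt L_\infty$, which is therefore a left coset of a two-dimensional subgroup of $\sl$; by Corollary~3.17 of~\cite{mpe11} such cosets are minimal, contradicting that $\wt L_\infty$ has constant mean curvature $\tfrac12\mathrm{Ch}(X_\infty)>0$. This global-propagation-of-tangency step, via the sign of the Jacobi function and the maximum principle, is the essential idea missing from your proposal; without it the contradiction is not obtained.
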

\begin{proof}
Let $\{g_n\}_n\subset \cA$ be a sequence of left invariant metrics which converges to a
metric $g_{\infty}\in \cM$. During this proof, we will use the notation stated in the paragraph before Lemma~\ref{lemma5.16}.  By Lemmas~\ref{lem:H} and~\ref{lemma5.16},  it suffices to show that
the sequence $\{ \ve _n\} _n$ is bounded away from zero. Arguing by contradiction,
assume that after extracting a sequence, one has $\ve _n\to 0$ as $n\to \infty $.

We first make three observations:
\begin{enumerate}[(O1)]
\item ${\rm Ch}(X_{\infty })=\lim_{n\to \infty} {\rm Ch}(X_n)$: this
follows from the definition of the Cheeger constant and from the fact that the metrics
$g_n$ converge uniformly to $g$.
\item For each $n\in \N$ we have Ch$(X_n)=2H(X_n)$: This is a consequence of Lemma~\ref{lem:H},
as $g_n\in \cA$.
\item Ch$(X_{\infty })>0$, as every left invariant metric on $\sl $ has this property.
\end{enumerate}

By the invariance of the foliation $\cF_n$ under the left action of $\G^H$, we can assume
that for each $n\in \N$ there exists a point $p_n\in \Sigma _0/\Delta $ such that the leaf $L(p_n)$ of
$\cF_n/\Delta$ passing through $p_n$ makes an angle (with respect to $g_0$) of $\ve_n>0$ with
$N_{\cF _0/\Delta}$ at the point $p_n$.
Since the metrics $g_n$ converge uniformly to $g$, then there are uniform estimates for the
norms of the second forms of all of these leaves $L(p_n)\subset W_n$ by the curvature estimates in \cite{rst1}.
Therefore, after replacing by a subsequence, the points $p_n$ converge to some point
$ p_\infty \in \Sigma _0/\Delta$ and there exists a complete, connected, immersed
two-sided surface $\wt{L}_{\infty}\subset X$ which is invariant under $\Delta$, with $p_\infty\in L_{\infty}:=
\wt{L}_{\infty }/\Delta $, with constant mean curvature $\frac12{\rm Ch}(X_{\infty })$ in $(\sl ,g_{\infty })$,
and such that $L_{\infty }$ is a smooth limit of portions of the leaves $L(p_n)$. In particular,
$L_{\infty }$ is stable and the same holds for its lifting $\wt{L}_{\infty }$ to $X$.
Since $L_{\infty}$ is a smooth limit of portions of the $L(p_n)$, then $K^P$
is everywhere tangent to $L_{\infty}$. As the angle with respect to $(g_0)_W$ of
$L(p_n)$ with $N_{\cF _0/\Delta }$ goes to zero as $n\to \infty $, then
$N_{\cF _0/\Delta }$ is tangent to $L_{\infty }$ at $p_{\infty }$.
Note that the inner product (with respect to $(g_{\infty })_W$) of $N_{\cF _0/\Delta }$
with the unit normal vector field $N_{L_{\infty }}$ to $L_{\infty }$ cannot change sign on
$L_{\infty }$, as the same property holds if we exchange $L_{\infty }$
by $L(p_n)$ and $(g_{\infty })_W$ by $(g_n)_W$ for all $n$, by item~(1) of Lemma~\ref{c:graph}.
Hence we can assume $(g_{\infty })_W(N_{\cF _0/\Delta },N_{L_{\infty }})\geq 0$ on $L_{\infty }$.
Using Lemma~\ref{lemNhoroc} and the fact that $K^P$ is everywhere tangent to $L_{\infty }$,
we deduce that
\begin{equation}
\label{eq:Jacobi}
(g_{\infty })_W(N_{\cF _0/\Delta },N_{L_{\infty }})=(g_{\infty })_W(K^H,N_{L_{\infty }}).
\end{equation}
As $K^H$ is a Killing field on $X_{\infty }$, then the function $u=(g_{\infty })_W(K^H,N_{L_{\infty }})$
satisfies the Jacobi equation on $L_{\infty }$. Equation (\ref{eq:Jacobi}) implies that
$u\geq 0$ on $L_{\infty }$ with a zero at $p_{\infty }$, hence $u$ is everywhere zero on $L_{\infty }$
by the maximum principle.

Since the linearly independent right invariant vector fields $K^P$, $K^H$ on $\sl $
are everywhere tangent to $\wt{L}_{\infty}$, then $\wt{L}_{\infty }$ is the
left coset of some two-dimensional subgroup of $\sl $. In particular, $\wt{L}_{\infty }$
is ambiently isometric to a two-dimensional subgroup of $\sl $. This is a contradiction, as
Corollary~3.17 in~\cite{mpe11} ensures that two-dimensional subgroups of $\sl $ with a
left invariant metric
have zero mean curvature, and the mean curvature of $\wt{L}_{\infty}$ is $\frac12{\rm Ch}(X_{\infty })>0$
by Observation (O3). This contradiction completes the proof of the lemma.
\end{proof}

\subsection{Openness of $\cA $.}
\begin{lemma}
  \label{lem5.15}
  $\cA $ is open in $\cM$.
\end{lemma}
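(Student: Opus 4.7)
The plan is to apply the implicit function theorem: for each $g$ in $\cM$ close to a given $g_0\in\cA$, I will construct a constant mean curvature torus in $(W,g_W)$ whose lift to $\sl$ satisfies the hypotheses of Lemma~\ref{lem:H}, after which Lemma~\ref{lem:H} delivers $g\in\cA$.

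Fix $g_0\in\cA$ with its associated properly embedded surface $\Sigma\subset(\sl,g_0)$ as in Proposition~\ref{claim:3d}, and set $T_0=\Sigma/\Delta\subset W$. Then $T_0$ is a compact CMC torus of mean curvature $H_0=H(X_0)$ in $(W,(g_0)_W)$, and it is stable as an isoperimetric boundary by Proposition~\ref{claim:3d}(E). The right invariant Killing field $K^H$ produces the normal Jacobi function $\phi:=g_0(K^H,N)$ on $\Sigma$, which is nowhere zero by property (B). Moreover $\phi$ descends to a positive smooth function on $T_0$: the normality of $\G^P$ inside $\H^2_\theta$ combined with $\G^P$-invariance of $\Sigma$ forces $\G^P$ to preserve each leaf of $\cF$, so left translation by $\gamma\in\G^P$ moves $K^H$ only by a vector tangent to $\Sigma$, and hence $\phi\circ l_\gamma=\phi$. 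Stability of $T_0$ plus positivity of $\phi\in\ker L_0$ (where $L_0=\Delta_{T_0}+|A|^2+\mathrm{Ric}(N,N)$ is the Jacobi operator) yields, via the spectral theory of Schr\"odinger operators on a closed surface, that $0$ is a simple first eigenvalue of $-L_0$ and
\[
\ker L_0=\R\phi.
\]

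Now consider the smooth map
\[
F\colon(\phi^\perp\cap C^{2,\alpha}(T_0))\times\R\times\cM\longrightarrow C^{0,\alpha}(T_0),\qquad F(u,c,g)=H_g(u)-c,
\]
where $H_g(u)$ is the mean curvature function of the $g_W$-normal graph $T_u^g$ of $u$ over $T_0$ and $\phi^\perp$ is the $L^2$-orthogonal complement of $\phi$ with respect to the $(g_0)_W$-area on $T_0$. Its partial derivative in $(u,\delta c)$ at $(0,H_0,g_0)$ equals $L_0u-\delta c$. Self-adjointness of $L_0$ makes its image equal to $\phi^\perp\subset C^{0,\alpha}(T_0)$, and since $\int_{T_0}\phi\,dA>0$ one sees that $(u,\delta c)\mapsto L_0u-\delta c$ is a topological isomorphism from $(\phi^\perp\cap C^{2,\alpha})\times\R$ onto $C^{0,\alpha}(T_0)$: given $f\in C^{0,\alpha}$, set $\delta c=-\int_{T_0}f\phi\,dA\big/\int_{T_0}\phi\,dA$ so that $f+\delta c\in\phi^\perp$, then solve $L_0u=f+\delta c$ with $u\in\phi^\perp$. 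The implicit function theorem therefore produces a unique smooth family $(u(g),c(g))$ of solutions on a neighborhood $V\subset\cM$ of $g_0$.

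It remains to check that the resulting torus $T_g=T_{u(g)}^g\subset W$ lifts to a surface satisfying (A), (B), (C). Every $\gamma\in\G^P$ acts by isometries of all left invariant metrics on $\sl$, preserves $T_0$, and leaves $\phi$ invariant; the ensuing equivariance of $F$ together with uniqueness in the implicit function theorem forces $u(g)\circ l_\gamma^{-1}=u(g)$, so $T_g$ is $\G^P$-invariant. Its lift $\Sigma_g\subset(\sl,g)$ via $\pi_W$ is then properly embedded, $\Delta$- and $\G^P$-invariant with constant mean curvature $c(g)$, so (C) and the tangency part of (A) hold. Property (B) persists under small perturbations by openness of transversality, and after replacing $\Sigma_g$ by $l_h(\Sigma_g)$ for a suitable $h\in\G^H$ we can arrange $e\in\Sigma_g$, completing (A). Lemma~\ref{lem:H} applied to $\Sigma_g$ then gives $g\in\cA$, proving that $\cA$ is open in $\cM$. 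The main obstacle is the non-degeneracy step $\ker L_0=\R\phi$ on the closed torus $T_0$: this identification requires both the positivity of $\phi$ (furnished by the transversality in property (B)) and the isoperimetric stability of $T_0$ (property (E)), without which $0$ need not be a simple first eigenvalue of $-L_0$ and the linearization of $F$ might fail to be invertible.
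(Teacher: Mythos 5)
Your proof is correct and arrives at the same conclusion as the paper, but it takes a genuinely different route at one key step. Both approaches reduce (via Lemma~\ref{lem:H}) to producing, for $g'$ near $g_0\in\cA$, a properly embedded surface in $(\sl ,g')$ satisfying conditions (A)--(C) of Proposition~\ref{claim:3d}, and both use the fact that $T_0=\Sigma/\Delta$ carries the nowhere-zero Jacobi function $J=g_W(K^H,N)$ to make the linearization invertible. (Your invocation of ``stability'' here is redundant: on a closed surface, the mere existence of a \emph{positive} Jacobi function forces $0$ to be the simple first eigenvalue of $-L_0$, so $\ker L_0=\R J$, which is exactly the argument the paper uses implicitly.) The divergence comes in the verification of property (A): the paper argues by contradiction, noting that if $K^P$ were not tangent to $\Sigma'$ then $g'_W(K^P,N_{T'})$ would be a nonzero Jacobi function on $T'$, hence a nonzero multiple of $J'$ with $\int_{T'}g'_W(K^P,N_{T'})\neq 0$, which contradicts the Divergence Theorem applied to the divergence-free field $K^P$ on the finite-volume end bounded by $T'$. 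You instead impose the constraint $u\in J^\perp$, observe that $J$ (and hence $J^\perp$, $T_0$, and the mean-curvature map $F$) is $\G^P$-equivariant, and deduce $\G^P$-invariance of the perturbed torus directly from uniqueness in the Implicit Function Theorem. Your equivariance route is cleaner and more direct (it produces the desired invariance rather than ruling out its failure), although it depends on the effective $\G^P$-action on $W$ being compact (it is, since $\G^P/\langle a_1\rangle\cong\esf^1$) so that orbits stay in the uniqueness neighborhood; the paper's divergence argument avoids any appeal to compactness of the group action. Both are valid.
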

\begin{proof}
Fix $g\in \cA$. We will show that metrics in $\cM$ that are sufficiently close to $g$ are also in
$\cA$. Let $X=(\sl,g)$ and  for $g'$ sufficiently close to $g$,
let $X'=(\sl ,g')$.
By Lemma~\ref{lem:H}, it suffices to prove that there is a properly embedded surface
$\Sigma' \subset X'$ of constant mean curvature satisfying items~(A), (B), (C) of Proposition~\ref{claim:3d}
for $g'$ sufficiently close to $g$.
As $g\in \cA $, there exists a properly embedded surface $\Sigma \subset X$ with constant mean
curvature $H(X)$ satisfying Proposition~\ref{claim:3d}.
Consider the quotient torus $T=\Sigma/\Delta$
of constant mean curvature $H(X)$ in $(W,g_W)$.

Note that the existence of the product foliation $\cF /\Delta $ of $(W,g_W)$ by tori of constant
mean curvature $H(X)$ given by item~(D) of Proposition~\ref{claim:3d},
implies that $T$ admits a positive Jacobi function $J$. Namely, one can choose
$J=g_W(K^H,N_T)$ for a unit normal vector field $N_T$ to $T$; observe that
equation (\ref{eq:5.19}) implies that although the vector field $K^H$ does not descend
to $W$, the inner product with respect to the metric $g$ of $K^H$ with the unit normal vector to $\Sigma $
descends to the quotient torus $T$ as $K^P$ is everywhere tangent to $\Sigma $.
This implies that the space
of Jacobi functions on $T$ is generated by $J$, and $\int_T J\neq 0$.
By Proposition~\ref{prop:open} in the Appendix,
for $g'\in \cM $ sufficiently close to $g$, there is an embedded constant
mean curvature torus $T'$ in $(W,g'_W)$ that is smoothly close to $T$, and
the space of Jacobi functions on $T'$ is one-dimensional, generated by a
smooth function $J'\colon T'\to \R $ with $\int _{T'}J'\neq 0$.

Since every integral curve of $K^H$ intersects transversely $\Sigma $
at a single point and both $T ,T'\subset W$ are compact and
arbitrarily close, then every integral curve of $K^H$ intersects
transversely $\Sigma ':=\pi_W^{-1}(T')\subset \sl $ at a single point, where $\pi _W$ is the
projection defined in (\ref{eq:gW}). In other words, $\Sigma '$ satisfies
property (B) of Proposition~\ref{claim:3d}. Property (C) of the same proposition
holds for $\Sigma '$ by construction.

The next argument shows that $\Sigma '$ satisfies property (A) of
Proposition~\ref{claim:3d}: otherwise the inner product with
respect to $g'$ of $K^P$ with the unit normal vector field to $\Sigma '$ defines a
non-zero Jacobi function on $\Sigma '$. As $K^P$ is invariant under the left action
of $\Delta $ (Lemma~\ref{claim4.3}), then the function $g'_W(K^P,N_{T'})$ is well-defined on $T'$
($N_{T'}$ stands for the unit normal vector field to $T'$)
thereby producing a non-zero Jacobi function on $T'$. As the space
of Jacobi functions on $T'$ is one-dimensional, then $g'_W(K^P,N_{T'})$
is a non-zero multiple of the function $J'$, and thus, $\int _{T'}g'_W(K^P,N_{T'})\neq 0$.
This is impossible, since the divergence of $K^P$ in $(W,g'_W)$ is zero
and we contradict the Divergence Theorem applied to $K^P$ on the end of finite volume
bounded by $T'$ in $(W,g'_W)$.
%

Therefore, for $g'$ sufficiently close to $g$,
the lifted surface $\Sigma '\subset X'$ satisfies conditions (A), (B) and (C)
of Proposition~\ref{claim:3d}. By Lemma~\ref{lem:H},
$\Sigma '$ satisfies the remaining properties (D) and (E) of Proposition~\ref{claim:3d},
that is, $g'\in \cA $ and the proof of the lemma is complete.
\end{proof}

\subsection{Proof of Proposition~\ref{claim:3d}.}
By Lemmas~\ref{lem:exist}, \ref{lem:closed} and \ref{lem5.15},
$\cA\subset \cM$ is a non-empty subset of $\cM$ that is both open and closed.  Since
$\cM$ is connected, then $\cA=\cM$, which by definition of $\cA$ as the subset
of the metrics $\cM$ of $\sl$ for which the Proposition~\ref{claim:3d} holds,
completes the proof of the proposition.
{\hfill\penalty10000\raisebox{-.09em}{$\Box$}\par\medskip}

\begin{remark}
\label{newrem}
  {\rm
 Let $X$ be isometric to $\sl $ equipped with a left invariant metric and let $\cF $
 be the foliation described in Proposition~\ref{claim:3d}. Recall that $\cF $ is
 invariant under left translations by elements in $\G^H\cup \G^P$. In particular,
 $\cF $ is invariant under left translations by elements in the two-dimensional
 subgroup $\Hip _{\t }$ of $\sl $ generated by $\G^H\cup \G^P$. Since every element
$a\in \sl $ can be expressed uniquely as $a=b\, c$ where $b\in \G^E$ and $c\in \Hip _{\t }$,
then every left translation of $\cF $ can be expressed as $b\, \cF $ for some $b\in \G^E$.
As the center $Z$ of $\sl $ is an infinite cyclic subgroup contained in $\G^E$ and
elements in $Z$ leave invariant $\cF $, then the collection of left translations of
$\cF $ can be parameterized by the $\esf^1$-family $\G^E/Z$.
  }
\end{remark}

\section{The proof of Theorems~\ref{t1} and \ref{thm1.6}}
\label{sec6}

Let $X$ be a non-compact, simply connected homogeneous
three-manifold. First suppose that $\mathrm{Ch}(X)=0$. In this setting,
item~(1) of Theorem~\ref{t1} follows from Remark~\ref{remark1} and from
item~(2) of Lemma~\ref{lem2.2}. Item~(2) of Theorem~\ref{t1}
is a consequence of Lemma~\ref{lem2.3} and Theorem~\ref{t2}. Finally, item~(3) of Theorem~\ref{t1} also
follows from Theorem~\ref{t2}.

Next consider the case ${\rm Ch} (X)>0$. In this situation, $X$ can be isometrically identified with either
$\sl$ endowed with a left invariant metric or a semidirect product $\R^2
\rtimes_A \R$ with trace$(A)>0$, equipped with its canonical metric. We separate the proof of Theorem~\ref{t1}
when ${\rm Ch} (X)>0$ into two cases.
\vspace{.2cm}

\noindent{\bf Case A: $X$ is  $\sl$ with a left invariant metric.}

In order to prove several parts of Theorem~\ref{t1} in this Case A,
we will use the product foliation $\cF$ given
in  Proposition~\ref{claim:3d} by surfaces of mean curvature $H(X)=\frac12 {\rm Ch}(X)$.
Let $M$ be a compact immersed surface in $X$ with constant mean curvature. Then there exists a unique leaf $\Sigma'$ of $\cF$ such that
$M$ lies on the mean convex side of $\Sigma'$ and intersects $\Sigma'$ at some point $p$. Since $M$ is compact
and $\Sigma'$ is non-compact and properly embedded in $X$, these surfaces are different, and an
application of the maximum principle implies that the absolute mean curvature function of
$M$ at the point $p$ is  greater than $\frac12 {\rm Ch}(X)$.
The inequality $\frac12 {\rm Ch}(X)<H$ where $H>0$ is the constant mean curvature of the boundary of any isoperimetric
domain in $X$ (in item~(1) of Theorem~\ref{t1}) then follows. The inequality ${\rm Ch}(X)<\frac{I(t)}{t}$
for all $t>0$ follows from item~(2) of Lemma~\ref{lem2.2}, and then Remark~\ref{remark1}
finishes the proof of item~(1) of Theorem~\ref{t1}.
Item (2) of Theorem~\ref{t1} follows from Lemma~\ref{lem2.3} and Assertion~\ref{claim4.6bis}. Hence it only remains to prove
item~(3) of Theorem~\ref{t1} in this Case~A.

In the sequel, $\cF $ will denote the foliation of $X$ that appears in
Proposition~\ref{claim:3d}.

\begin{lemma}
\label{corcon2}
There exist positive constants $C, \tau$ such that if
$\Omega$ is an isoperimetric domain in $X$ with volume greater than $1$,
 then:
\ben \item The norms of the second fundamental forms of $\partial \Omega$ and
of the leaves of $\cF$ are  bounded from above by $C$.
\item The injectivity radius of $\partial \Omega$ and of the leaves of $\cF $
are both greater than $4\tau$.
\item For any $p\in \partial \Omega $ and $t\in (0,2\tau ]$, we have
\begin{equation}
\label{eq:areabound}
\frac{1}{2}\pi t^2\leq \mbox{\rm Area}(B_{\partial \Omega }(p,t))\leq 2\pi t^2.
\end{equation}
\item If $\Delta $ is a maximal collection of pairwise disjoint geodesic disks of radius $\tau $
in $\partial \Omega $, then
\begin{equation}
\label{eq:16}
\mbox{\rm Area}(\Delta )\geq \frac{1}{16}\mbox{\rm Area}(\partial \Omega ).
\end{equation}
\een
\end{lemma}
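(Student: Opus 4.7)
The plan is to establish the four items sequentially, exploiting homogeneity of $X$ and known curvature estimates.

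For item~(1), I would invoke the uniform second fundamental form estimates for isoperimetric surfaces enclosing volume at least a fixed positive constant, as already referenced in Section~\ref{secprel} (in particular Theorem~18 of Ros' survey~\cite{ros10}). This gives a uniform bound on $|A_{\partial \Omega}|$ whenever $\mathrm{Vol}(\Omega)\geq 1$. For the leaves of $\cF$, Proposition~\ref{claim:3d}(B) shows that every leaf is the image of the distinguished leaf $\Sigma$ under a left translation by some $h\in\G^H$, and these translations are ambient isometries of $X$. Since the quotient $\Sigma/\Delta$ is a compact torus in $(W,g_W)$ with constant mean curvature, its second fundamental form is bounded; this common bound transfers uniformly to all leaves. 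Taking the maximum of the two bounds yields $C$.

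For item~(2), from the uniform second fundamental form bound in item~(1), the ambient bounded geometry of $X$, and the fact that the surfaces in question are properly embedded with constant mean curvature, the existence of a uniform one-sided regular $\delta$-neighborhood (Theorem~3.5 of Meeks--Tinaglia~\cite{mt3}, already used in Step~5 of the proof of Theorem~\ref{t2}) forces the injectivity radii to be bounded below by a uniform constant: a short intrinsic geodesic loop would contradict the existence of a regular tubular neighborhood of uniform size, since the two ends of the loop would force self-intersection of the normal exponential map. Choose $\tau>0$ so that $4\tau$ lies below this uniform injectivity radius bound.

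For item~(3), once the second fundamental form of $\partial\Omega$ is bounded by $C$ and the injectivity radius exceeds $4\tau$, standard comparison arguments (e.g.\ via the Gauss equation controlling the intrinsic sectional curvature, followed by Rauch comparison or a direct Jacobi-field computation) give that intrinsic geodesic disks of radius $t\leq 2\tau$ in $\partial\Omega$ are genuine embedded disks with area comparable to $\pi t^2$. After possibly shrinking $\tau$, the two-sided comparison (\ref{eq:areabound}) holds.

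For item~(4), the argument is a standard packing/covering estimate: by maximality of $\Delta$, every point of $\partial\Omega$ lies within distance $2\tau$ of the center of some disk of $\Delta$, so the disks of the same centers but radius $2\tau$ cover $\partial\Omega$. Writing $\Delta=\{B_{\partial\Omega}(p_i,\tau)\}_{i=1}^k$, the upper bound in (\ref{eq:areabound}) gives
\[
\mathrm{Area}(\partial\Omega)\leq \sum_{i=1}^k \mathrm{Area}(B_{\partial\Omega}(p_i,2\tau))\leq 2\pi(2\tau)^2 k = 8\pi\tau^2 k,
\]
while the lower bound in (\ref{eq:areabound}) yields $\mathrm{Area}(\Delta)\geq \tfrac{1}{2}\pi\tau^2 k$, so
\[
\mathrm{Area}(\Delta)\geq \tfrac{1}{2}\pi\tau^2 \cdot \frac{\mathrm{Area}(\partial\Omega)}{8\pi\tau^2}=\tfrac{1}{16}\mathrm{Area}(\partial\Omega),
\]
which is (\ref{eq:16}). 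The main technical obstacle is item~(2): the curvature estimate alone does not give an injectivity radius bound, and one must combine the embeddedness of $\partial\Omega$ with the regular tubular neighborhood result of~\cite{mt3}; once this is in hand, items~(3) and~(4) are routine.
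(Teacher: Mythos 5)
Items~(1), (3), and~(4) of your proposal follow the paper's argument essentially verbatim: the curvature estimate for isoperimetric surfaces from Section~\ref{secprel}, the compactness/congruence of the quotient tori $\Sigma/\Delta$ for the leaves of $\cF$, the Gauss equation plus comparison for~(\ref{eq:areabound}), and the standard maximal-packing covering argument for~(\ref{eq:16}).

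The discrepancy is in item~(2), and it is a real one. The paper derives the injectivity radius bound purely from the second fundamental form bound in item~(1): in a homogeneous $X$, an immersed surface with $|A|\leq C$ has, near any point, a graph representation over a fixed-radius disk in the tangent plane; this forces every sufficiently small intrinsic geodesic ball to be simply connected (ruling out short geodesic loops) while the Gauss equation bounds the intrinsic curvature (ruling out nearby conjugate points), which is exactly what Klingenberg's lemma needs. Crucially, no embeddedness and no constant mean curvature hypothesis is required for this. Your route instead invokes the Meeks--Tinaglia regular one-sided $\de$-neighborhood theorem, and the step ``a short intrinsic geodesic loop would force self-intersection of the normal exponential map'' is a gap. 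A one-sided regular tubular neighborhood of uniform size says that the surface does not approach \emph{itself} from the mean-convex side within distance $\de$; it says nothing about whether an intrinsic geodesic can close up in short length while returning to its basepoint with a transverse tangent direction. These are independent phenomena: a short closed geodesic on $\Sigma$ does not, on its own, produce two nearby sheets of $\Sigma$ facing each other in the normal direction, so no contradiction with the regular-neighborhood property is obtained. If you want a correct variant of your idea, argue instead that a unit-speed intrinsic geodesic loop $\gamma\subset\partial\Omega$ satisfies $|\nabla^X_{\gamma'}\gamma'|=|A(\gamma',\gamma')|\leq C$, so $\gamma$ is a short closed curve in the bounded-geometry ambient $X$ with ambient curvature at most $C$, which is impossible below a length threshold depending only on $C$ and $X$; combined with the intrinsic curvature bound from Gauss, this yields the uniform lower bound $4\tau$ on injectivity radius without ever needing~\cite{mt3}.
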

\begin{proof}
  That item~(1) of the lemma holds for $\parc \Omega$ was explained at the
  beginning of Section~\ref{secprel}. Furthermore, item~(1) also holds for the
  leaves of the product foliation $\cF$ since all these leaves have compact
  quotients and are isometric to each other by an ambient left translation,
  see Proposition~\ref{claim:3d}.

In addition, note that for any $C>0$ there exists a $\tau>0$ such that the
following property holds: For any complete surface in $X$
such that the norm of its second fundamental form is bounded by $C$, the
injectivity radius of this surface is greater than $4\tau$.
This proves that item~(2) of the lemma holds.

By the Gauss equation and items~(1) and (2), the Gaussian curvature of $\partial \Omega $ is uniformly bounded and
the inequalities in (\ref{eq:areabound}) hold for a possibly smaller $\tau $.

Let $\Delta =\{ B_1,\ldots ,B_k\} $ be a maximal collection of pairwise disjoint geodesic disks of radius $\tau $
in $\partial \Omega $ and let $\Delta'=\{ B_1',\ldots ,B_k'\} $ the related sequence of
geodesic disks of radius $2\tau $ with $B_i$ having the same center as the
corresponding $B_i$, $i=1,\ldots k$. By the triangle inequality, as
the collection $\Delta $ is maximal, then
the collection of disks $\Delta'$ is a covering of $\partial \Omega$. Then,
\[
  \mbox{Area}(\Delta )=\sum _{i=1}^k\mbox{Area}( B_i)\stackrel{(\ref{eq:areabound})}{\geq }
  k\frac{\pi \tau ^2}{2}\stackrel{(\ref{eq:areabound})}{\geq }\frac{1}{16}
  \sum _{i=1}^k\mbox{Area}(B'_i)\stackrel{(\star )}{\geq }
  \frac{1}{16}\mbox{Area}(\partial \Omega ),
  \]
where
$(\star )$ follows from the fact that $\Delta'$ is a covering of $\partial \Omega$.
\end{proof}
Given an $a\in X$, let $\cF_a$ denote the foliation obtained by left translating $\cF $  by $a$.
As we explained in Remark~\ref{newrem},  the set $\{\cF_a \mid a\in X\}$ is an $\esf^1$-family
of foliations of $X$, all whose leaves have constant mean curvature $H(X)$.
We will use this compactness property  for
the family of left translations of $\cF$ in the proof of the next theorem;
items (1) and (2) of Theorem~\ref{corcon} will complete then the proof of item~(3) of Theorem~\ref{t1}.

\begin{theorem}\label{corcon}
Given a sequence $\{ \Omega _n\} _n$ of isoperimetric domains in $X$ with volumes tending to infinity,
there exist open sets $S_n\subset \partial \Omega _n$ with
\begin{equation}
\label{eq:areaAn}
\frac{\mbox{\rm Area}(S_n)}{\mbox{\rm Area}(\partial \Omega _n)}\to 1\quad \mbox{ as $n\to \infty $,}
\end{equation}
such that for any sequence of points $q_n\in S_n$, there exists a subsequence  of
the surfaces $\{ q_n^{-1}\partial \Omega _n\} _n$
that converges smoothly (in the uniform topology on compact sets of $X$) to the leaf
$\Sigma_a$ of some $\cF_a $ passing through $e$.
Furthermore, for this subsequence,  the domains $q_n^{-1}\Omega_n$ converge to the
closure of the mean convex component of $X-\Sigma_a$.
In particular: \ben
\item The radii of the $\Omega_n $ tend to infinity.
\item The mean curvatures of $\partial \Omega_n$ converge to $H(X)$.
\een
\end{theorem}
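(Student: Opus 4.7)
The plan is to combine a bounded-geometry compactness argument for left translates of the boundaries $\partial\Omega_n$ with a first-touch barrier argument against the $\esf^1$-family of foliations $\{\cF_a\}$ from Remark~\ref{newrem}, and then to build the sets $S_n$ by a contradiction argument based on the area density information of Lemma~\ref{corcon2}. Concretely, Lemma~\ref{corcon2}(1)--(2) supplies uniform bounds on the second fundamental forms of the $\partial\Omega_n$ and uniform lower bounds on their injectivity radii. For any sequence $q_n \in \partial\Omega_n$, the translated surfaces $q_n^{-1}\partial\Omega_n$ pass through $e$ with uniformly bounded geometry, so standard compactness (local graph parametrizations plus elliptic regularity for the constant mean curvature equation) yields a subsequence converging smoothly on compacta to a complete, properly embedded surface $\Sigma_\infty$ through $e$ of constant mean curvature $H_\infty = \lim_k H_{n_k}$, while the closed domains $q_n^{-1}\overline{\Omega_n}$ subconverge in the local Hausdorff topology to a closed region $\Omega_\infty$ with $\partial\Omega_\infty = \Sigma_\infty$ whose mean curvature vector points inward. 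The maximum principle argument from the opening of Case~A, applied to each compact $\partial\Omega_n$ against a leaf of $\cF$, yields $H_n > H(X)$, and hence $H_\infty \ge H(X)$.

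The crucial step is to establish the reverse inequality $H_\infty \le H(X)$ and to identify $\Sigma_\infty$ as a leaf of some $\cF_a$ through $e$. For this I would run a first-touch argument using the family $\{\cF_a\}_{a\in\G^E/Z}$: starting from a leaf of an appropriately chosen $\cF_a$ positioned disjointly from $\overline{\Omega_\infty}$, slide it along a suitable 1-parameter subgroup of isometries until the first instant of tangential contact with $\Sigma_\infty$ at some point $p$. Matching the orientation of the mean curvature vectors at $p$, the tangency maximum principle for constant mean curvature surfaces forces $H_\infty \le H(X)$, so $H_\infty = H(X)$, and then the strong maximum principle combined with real analyticity of constant mean curvature surfaces upgrades the local tangency at $p$ to $\Sigma_\infty$ coinciding globally with a leaf of some $\cF_{a'}$. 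After left-translating within the circle $\G^E/Z$ if necessary, this leaf passes through $e$, and $\Omega_\infty$ is its mean convex side.

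Given this convergence for an arbitrary sequence $q_n \in \partial\Omega_n$, I would construct the sets $S_n$ as follows. Fix parameters $R > 0$ and $\epsilon > 0$ and let $S_n(R,\epsilon) \subset \partial\Omega_n$ be the open set of points $q$ for which $q^{-1}\partial\Omega_n \cap B_X(e,R)$ is $\epsilon$-close in the $C^2$ topology to the intersection with $B_X(e,R)$ of some leaf of some $\cF_a$ through $e$. If $\mathrm{Area}(S_n(R,\epsilon))/\mathrm{Area}(\partial\Omega_n)$ failed to tend to $1$ along some subsequence, the area density bounds of Lemma~\ref{corcon2}(3)--(4), applied to a maximal packing of geodesic disks of radius $\tau$ in the complement, would produce a sequence $q_n \in \partial\Omega_n - S_n(R,\epsilon)$ to which the compactness-plus-barrier argument above applies, contradicting $q_n \notin S_n(R,\epsilon)$ for large $n$. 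A diagonal extraction over $R \to \infty$ and $\epsilon \to 0$ then delivers the required sets $S_n$. Items~(1) and (2) follow immediately: every subsequential limit of the sequence $\{H_n\}$ equals $H(X)$, so $H_n \to H(X)$; and because $\Omega_\infty$ is the unbounded mean convex side of a properly embedded topological plane in $X$, the Hausdorff convergence of $q_n^{-1}\overline{\Omega_n}$ to $\overline{\Omega_\infty}$ forces the radii of $\Omega_n$ to diverge.

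The principal obstacle will be executing the first-touch argument cleanly: one needs that among the foliations $\{\cF_a\}_{a\in\G^E/Z}$ there is always a leaf which can be slid from a position disjoint from $\overline{\Omega_\infty}$ into first tangential contact with $\Sigma_\infty$, with compatible orientations of the mean curvature vectors at the contact point. This should follow from the global product structure of each $\cF_a$ provided by Proposition~\ref{claim:3d} together with the compactness of the parameter space $\G^E/Z \cong \esf^1$, which jointly supply enough flexibility to handle the a priori unknown asymptotic shape of the limit surface $\Sigma_\infty$.
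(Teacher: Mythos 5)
Your high-level outline — translate by $q_n$, extract a smooth subsequential limit $\Sigma_\infty$ through $e$, identify it as a leaf of some $\cF_a$, and conclude — matches the paper's overall architecture, but there are two serious gaps in the mechanism you propose for carrying this out.

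\textbf{The first-touch argument is not justified, and you've put it at the heart of the proof.} You propose to slide a leaf of some $\cF_a$ from a position disjoint from $\overline{\Omega_\infty}$ until it first touches $\Sigma_\infty$, then invoke the tangency maximum principle to conclude $H_\infty \le H(X)$ and coincidence with a leaf. The standard failure mode here is escape at infinity: since $\Sigma_\infty$ is noncompact, the sliding family of leaves can approach $\Sigma_\infty$ asymptotically without ever reaching tangential contact at a finite point, and there is no a priori geometric control of $\Sigma_\infty$ at infinity (it is merely a complete CMC surface bounding a translate-limit of isoperimetric domains). You acknowledge this as the ``principal obstacle'' but offer only the compactness of $\G^E/Z\cong\esf^1$ by way of resolution, which does not address where along $\Sigma_\infty$ the putative first touch happens. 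The paper avoids this entirely: it never compares $\Sigma_\infty$ to a sliding leaf family. Instead, it applies the Divergence Theorem to $N_\cF$ on $\Omega$, combines it with the already-proven fact that ${\rm Area}(\parc\Omega_n)/{\rm Vol}(\Omega_n)\to 2H(X)$, and deduces that the set $I_\ve$ of points where $\langle N_\cF, N_{\parc\Omega}\rangle \le 1-\ve^2$ has small relative area. Then $S_n$ is defined so that every $q_n\in S_n$ sits within distance $\tau$ of an $\ve_n$-good disk. In the limit, this good disk forces the limit surface $P$ to have its unit normal identical to the foliation normal on a whole $\tau$-disk $D_\infty$, hence $D_\infty$ lies in a leaf, and analytic continuation then identifies $P$ with a leaf $\Sigma_a$ globally. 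No maximum principle comparison between $\Sigma_\infty$ and a moving leaf is made at all.

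\textbf{Your construction of $S_n$ is logically circular.} You define $S_n(R,\epsilon)$ as the set of points where the translated boundary is $C^2$-$\epsilon$-close to some leaf on $B_X(e,R)$, and then argue by contradiction: if ${\rm Area}(S_n(R,\epsilon))/{\rm Area}(\parc\Omega_n)\not\to 1$, extract $q_n\in\parc\Omega_n - S_n(R,\epsilon)$ and apply ``the compactness-plus-barrier argument above,'' obtaining that $q_n^{-1}\parc\Omega_n$ converges to a leaf — contradicting $q_n\notin S_n(R,\epsilon)$. But the convergence-to-a-leaf claim you invoke was set up in your first paragraph with no restriction on $q_n$, so if it held, it would apply to every sequence in $\parc\Omega_n$ and would automatically imply $S_n(R,\epsilon)=\parc\Omega_n$ for $n$ large, vacating the whole discussion. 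The theorem is written with the restriction $q_n\in S_n$ precisely because convergence to a leaf is \emph{not} being asserted for arbitrary $q_n$: the ``bad set'' $I_\ve$ is small but need not be empty, and the geometry of $\parc\Omega_n$ near $I_\ve$ is not controlled. You cannot simultaneously claim convergence for arbitrary $q_n$ and use that claim to build $S_n$ by contradiction. The paper instead gets (\ref{eq:areaAn}) from an unconditional integral-geometric estimate (Divergence Theorem plus the geodesic-disk packing bounds of Lemma~\ref{corcon2}), with no compactness argument appearing in the construction of $S_n$.

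There is a further omission worth noting: even granting that the component of $\partial\cD$ through $e$ is a leaf $\Sigma_a$, one must show $\partial\cD$ has no other component. Your proposal takes $\partial\Omega_\infty=\Sigma_\infty$ as immediate from the Hausdorff convergence, but a translate-limit of domains with connected boundary can a priori have several boundary components. The paper has a separate argument here — translating to put $\Sigma_a$ into the standard position $\Sigma$, then sliding leaves $l_{h(t)}(\Sigma)$ through the interior of $\wt\cD$ for all $t>0$ using the regular-neighborhood theorem and continuity of $t\mapsto{\rm dist}_X(\Sigma,l_{h(t)}(\Sigma))$ — to kill any extra components. This step needs to be supplied in your framework too.
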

\begin{proof}
Let $N_{\cF}$ denote the unit normal vector field to the foliation
$\cF$, which has divergence $-2H(X)$. Let $\Omega \subset X$ be an isoperimetric domain.
By the Divergence Theorem,
 \begin{equation}\label{hdiv1}
2 H(X) {\rm Vol} (\Omega) = \int_{\parc \Omega} \esiz
N_{\cF},N_{\parc \Omega} \esde \leq {\rm Area}(\parc \Omega),
 \end{equation}
where $N_{\parc \Omega}$ is the inward pointing unit normal of $\parc
\Omega$. By the already proven item~(2) of Theorem~\ref{t1}, we see that the
quantity $\ds \frac{{\rm Area} (\parc \Omega)}{{\rm Vol} (\Omega)} $
tends to $2 H(X)$ as ${\rm Vol} (\Omega)$ tends to infinity. Hence,
from \eqref{hdiv1} we see that given $\ve \in (0,1)$, there exists some
 $V(\ep)>1$ such that if ${\rm Vol} (\Omega)>V(\ve )$, then
 \begin{equation}
 \label{hdiv2}
1- \ep^4 \leq \frac{\int_{\parc \Omega} \esiz N_{\cF},N_{\parc
\Omega}\esde}{{\rm Area} (\parc \Omega)} \leq 1.
 \end{equation}

Let $I_{\ep}\subset \parc \Omega$ denote the closed subset of
all points $p\in \parc \Omega$ such that $$\esiz
N_{\cF},N_{\parc \Omega}\esde (p) \leq 1-\ep^2.$$ For a generic choice of $\ve$,
$I_{\ep}$ is a smooth compact subdomain of $\parc \Omega$. In fact, since closed sets are measurable,
for every $\ve >0$ the area functional of $\partial \Omega$ makes sense on $I_{\ve}$ and
 ${\rm Area} (I_{\ep})+{\rm Area} (\partial \Omega - I_{\ep})= {\rm Area} (\partial \Omega)$.
It then follows from
\eqref{hdiv2} that
\[
1- \ep^4 \leq  \frac{(1-\ep^2){\rm Area} (I_{\ep}) + {\rm Area} (\parc \Omega-I_{\ep})}{{\rm Area} (\parc
\Omega)}=1-\ve ^2\frac{ {\rm Area} (
I_{\ep})}{{\rm Area} (\parc \Omega)},
\]
from where 
 we get
 \begin{equation}
 \label{equiep}
\frac{ {\rm Area} (I_{\ep})}{{\rm Area} (\parc \Omega)} \leq \ep^2 .
 \end{equation}

Let $\Delta $ be a maximal collection of closed, pairwise disjoint geodesic disks of radius $\tau >0$ in $\partial
\Omega $ (where $\tau $ was defined in Lemma~\ref{corcon2}). Given $\ve >0$, a disk
$D$ in $\Delta $ is called an $\ve $-{\it good} disk if
Area$\left( D\cap I_{\ve }\right) <\ve $,
and an $\ve $-{\it bad} disk otherwise. We will denote by $\Delta _B$ the subcollection of $\ve $-bad
disks in $\Delta $.

Next we will prove the following property:
\begin{quote}
{\bf (P)} Given $\de >0$, there exists $\ve \in (0,\de )$
such that if $\Omega \subset X$ is an isoperimetric domain with volume greater than $V(\ve )$
(given so that(\ref{hdiv2}) holds),
then for any maximal collection of pairwise disjoint geodesic disks of radius $\tau $ in $\partial
\Omega $, the ratio of the number of $\ve $-bad disks to
the total number of disks in $\Delta $ is less than $\de $.
\end{quote}
Otherwise, we can find $\de >0$ for which given any $\ve \in (0,\de )$, there exists
an isoperimetric domain $\Omega \subset X$ with Volume$(\Omega )>V(\ve )$ and a maximal collection
$\Delta $ of pairwise disjoint geodesic disks of radius $\tau $ in $\partial
\Omega $ for which the ratio of the number of $\ve $-bad disks in $\Delta $ to
the total number of disks in $\Delta $ is not less than $\de $. Take any $\ve \in (0,\de )$ and
consider the set $I_{\ve }$ defined above. Then,
\[
\mbox{Area}(I_{\ve })\geq \mbox{Area}(\Delta \cap I_{\ve })\geq
\sum _{D\in \Delta _B}\mbox{Area}(D\cap I_{\ve })\geq \ve \# (\Delta _B)\geq \ve \de \# (\Delta ),
\]
where $\# (A)$ denotes the cardinality of a set $A$.
From (\ref{eq:areabound}), (\ref{eq:16}) and the fact that the disks in $\Delta $ are pairwise disjoint,
we deduce that
\[
32\pi \tau ^2\# (\Delta )\geq \mbox{Area}(\partial \Omega ).
\]
From the last two displayed inequalities and (\ref{equiep}), we get
\[
\frac{\ve \de }{32\pi \tau ^2}\leq \frac{\mbox{Area}(I_{\ve })}{\mbox{Area}(\partial \Omega )}\leq
\ve ^2,
\]
which is a contradiction if $\ve $ is chosen small enough. This proves property {\bf (P)}.

We now prove Theorem~\ref{corcon}.
Consider a sequence $\{ \Omega _n\} _n$ of isoperimetric domains in $X$ with
Volume$(\Omega _n)\to \infty $  as $n\to \infty $. For each $n\in \N$, let $\Delta _n$ be a
maximal collection of pairwise disjoint geodesic disks in $\partial
\Omega _n$ of radius $\tau $. Let $\de _n=\frac{1}{n}$, $n\in \N$. By property {\bf (P)},
there exists $\ve _n\in (0,\frac{1}{n})$ and a subsequence of $\{ \Omega _n\} _n$ (denoted in the
same way) such that Volume$(\Omega _n)>V(\ve _n)$ and
\begin{equation}
\label{eq:1n}
\frac{\# (\Delta _B(n))}{\# (\Delta _n)}<\frac{1}{n},
\end{equation}
where $\Delta _B(n)$ is the collection of $\ve _n$-bad disks in $\Delta _n$. We will
prove that the conclusions in Theorem~\ref{corcon} hold for this subsequence (note that this
is enough to conclude that Theorem~\ref{corcon} holds for the original sequence $\{ \Omega _n\} _n$).

Next we define the open set $S _n:=\partial \Omega _n-\cA _n$, where
\[
\cA _n=\{ p\in \partial \Omega _n\ | \
\mbox{dist}_{\partial \Omega _n}(p,\Delta _B(n))\leq \tau \} .
\]
We now prove that the sequence $\{ S_n\} _n$ satisfies (\ref{eq:areaAn}).
Write $\Delta _B(n)=\{ D_1,\ldots ,D_k\} $ and let $D_i'\subset \partial \Omega $ be the disk of radius $2\tau $
with the same center as $D_i$, $i=1,\ldots ,k$. Using (\ref{eq:areabound}) we have $\mbox{Area}(D_i')\leq  8\pi \tau ^2$
for every $i=1,\ldots ,k$. By the triangle inequality, $\cA _n$ is contained in $D_1'\cup \ldots \cup D_k'$.
Therefore,
\[
\mbox{Area}(\cA _n)\leq 8\pi \tau ^2\# (\Delta _B(n)),
\]
and
\[
\frac{\mbox{Area}(\cA _n)}{\mbox{Area}(\partial \Omega _n)}\leq
\frac{\mbox{Area}(\cA _n)}{\mbox{Area}(\Delta _n)}\leq 8\pi \tau ^2\frac{\# (\Delta _B(n))}
{\mbox{Area}(\Delta _n)}
\stackrel{(\ref{eq:areabound})}{\leq }
8\pi \tau ^2\frac{\# (\Delta _B(n))}{\frac{\pi }{2}\tau ^2\# (\Delta _n)}\stackrel
{(\ref{eq:1n})}{<}\frac{16}{n},
\]
from where (\ref{eq:areaAn}) follows.

Now consider a sequence $q_n\in S_n$, $n\in \N $. Since the set of foliations $\cF _{q_n^{-1}}$ lies
in the $\esf^1$-family of foliations $\{ \cF _a\ | \ a\in X\} $ (here we are using the notation introduced
just before the statement of Theorem~\ref{corcon}), then after choosing a subsequence, we can
assume that the $\cF _{q_n^{-1}}$ converge as $n\to \8 $ to $\cF _a$ for some $a\in X$.
Observe that the constant values $H_n$ of the mean curvatures of $q_n^{-1}\parc \Omega_n$ lie
in some compact interval of $(0,\8)$, since $H(X)>0$  by the already proven
item~(2) of Theorem~\ref{t1}.
Also, since the norms of the second fundamental forms of the surfaces
$q_n^{-1}\partial\Omega_n$ are bounded from above,  then Theorem~3.5
in~\cite{mt3} implies that each $q_n^{-1}\partial \Omega_n$ has a regular
neighborhood inside $q_n^{-1}\Omega_n$ of radius greater than some $r_0>0$, where $r_0$ only depends on
$X$ and on the uniform bound of the second fundamental forms of the surfaces $q_n^{-1}\partial \Omega_n$.
A standard compactness argument from elliptic theory (see \cite{mt4} for this type of argument)
proves that a subsequence of the regions $q_n^{-1}\Omega_n$ converges to a
properly immersed, three-dimensional domain $\cD\subset X$ that is \emph{strongly
Alexandrov embedded}, i.e., there exists a complete Riemannian three-manifold
$W$ with boundary and a proper isometric immersion $f\colon W \to X$ that is
injective on the interior of $W$ such that $f(W)=\cD$. Furthermore,
the  boundary $\partial \cD$ is a possibly
disconnected surface of positive constant mean curvature, which might
not be embedded but still satisfies that a small fixed normal variation of
$\partial \cD$  into $\cD$ is an embedded surface. The boundary
surface $\partial \cD$ also has a  regular $r_0$-neighborhood
in $\cD$.

By construction, one of the components $P$ of $\partial \cD $ passes through the origin,
and we claim that $P$ equals the leaf $\Sigma _a$ of $\cF _a$ passing through the origin.
By the defining property of the set $S_n$ and the fact that the collection $\Delta _n$ is maximal,
we can deduce that since $q_n\in S_n$, there exists a geodesic $\tau $-disk $D_n\in \Delta _n- \Delta _B(n)$
which is at an intrinsic distance less than $\tau $ from $q_n$. Clearly the disks $q_n^{-1}D_n$
converge as $n\to \infty $ to a geodesic disk $D_{\infty }\subset P$ of radius $\tau $.
Since $D_n$ is an $\ve _n$-good disk and $\ve _n\to 0$, then for all points $q\in D_{\infty }\subset P$
the unit normal to $P$ must be equal to the unit normal of the leaf of the foliation $\cF _a$
passing through $q$. This implies that $D_{\infty }$ is contained in some leaf of $\cF_a$
and by analytic continuation, we deduce that $P =\Sigma _a$.

We claim that $\partial \cD = \Sigma _a$. As explained  in Remark~\ref{newrem},
there exists an element $b\in X$ such that
after some fixed left translation, we may assume that $b\, \cF _a=\cF =\cF (\Sigma ,\G^H)$ and
$b\, \Sigma _a= \Sigma $ (here we are using the notation in Proposition~\ref{claim:3d}).
Call $\wt{\cD} = b\, \cD $ and note that $\Sigma $ is a component of $\partial \wt{\cD}$.
To show $\partial \cD = \Sigma _a$, it suffices to prove that $\partial \wt{\cD} = \Sigma $.

By item~(C) of Proposition~\ref{claim:3d}, $\cF = \{ l_{h(t)}(\Sigma )\ | \ t\in \R \} $,
where $h(t)$ is the parametrization of $\G ^H$ given by (\ref{eq:h}). Consider the
distance in $X$ from $\Sigma $ to $l_{h(t)}(\Sigma )$, as a function of $t$. This function is
continuous because it is the lifting of the corresponding distance function between leaves
of the associated quotient product torus foliation of $W$ described in item~(D) of Proposition~\ref{claim:3d},
which is clearly continuous. The continuity of $t\mapsto \mbox{dist}_X(\Sigma ,l_{h(t)}(\Sigma ))$ and the
existence of a fixed size regular neighborhood of $\Sigma \subset \partial \wt{\cD }$ in $\wt{\cD}$
(Theorem 3.5 in~\cite{mt3}) implies that
%
%
for $t_1 > 0$ sufficiently small, each of the leaves $l_{h(t)}(\Sigma )$ with $t\in (0,t_1]$,
is contained in the interior of $\wt{\cD}$.
%
Consider now for each $t\in (0,t_1]$ the related subdomain $\wt{\cD}(t)$ of $\wt{\cD}$ whose boundary
is $(\partial \wt{\cD}-\Sigma )\cup l_{h(t)}(\Sigma)$. Observe that $\wt{\cD}(t)$ is also strongly
Alexandrov embedded, and by the regular neighborhood theorem, $l_{h(t)}(\Sigma)$ has an
$r_0$-regular neighborhood in $\wt{\cD}(t)$. In particular, we can continue to consider the deformations
$\wt{\cD}(t)$ of $\wt{\cD}$ by increasing the value of $t$, and so by a continuity argument, define
$\wt{\cD}(t)$ for
all $t\in (0,\infty)$. Therefore $\partial \wt{\cD} = \Sigma$, and so $\partial \cD = \Sigma _a$, as claimed.
In particular, $\cD $ coincides with the mean convex component of $X - \Sigma _a$. As the domains $q_n^{-1}\Omega _n$
converge to $\cD$ and $\cD$ contains geodesic balls in $X$ of arbitrarily large radius, we conclude
that the radii of the manifolds $q_n^{-1}\Omega _n$ tend to infinity, and the same holds for the
radii of the $\Omega _n$. As $\{ \Omega _n\} _n$ is a subsequence of an arbitrary sequence of
isoperimetric regions with volumes tending to infinity, it follows that the radii of any such sequence
also tend to infinity. Finally, the mean curvatures of $\Omega _n$ converge to $H(X)$ since $\Sigma _a\subset
\partial \cD $ has mean curvature $H(X)$. This concludes the proof of Theorem~\ref{corcon}.
\end{proof}

\noindent{\bf Case B: X is a non-unimodular semidirect product $\R^2\rtimes_A \R$ endowed with
its canonical metric.}\vspace{.2cm}

The proof of Theorem~\ref{t1} given when $X=\sl $ with a left invariant metric can be easily modified
for the remaining case where $X=\R^2\rtimes_A \R$ with trace$(A)>0$. We next explain the main
aspects of this modification and leave the details to the reader.

Let $\cF =\{ \R^2\rtimes_A \{z\}\ | \ z\in \R \} $ be the foliation by horizontal
planes in $X = \R^2\rtimes_A \R$. By Proposition~\ref{propos2.5}, all these planes have
constant mean curvature $H(X) = \frac12 {\rm Ch}(X) = \frac12 \mbox{\rm trace}(A) > 0$, and are everywhere
tangent to the left invariant vector fields $E_1,E_2$ defined in (\ref{eq:6}). This foliation
$\cF $ plays the role of the foliation appearing in Proposition~\ref{claim:3d}.

Let $\Sigma $ be the plane $\R^2\rtimes_A \{0\}$, which is a normal subgroup of $X$. Consider
two linearly independent elements $a_1,a_2\in \Sigma $ and let $\Delta $ be the
$(\Z \times \Z)$-subgroup of $X$ generated by the left translations by $a_1,a_2$.
The role of the parabolic 1-parameter subgroup $\G^P$ appearing when $X$ was $\sl $ will be now played by
the 1-parameter subgroup $\{ ta_1\ | \ t\in \R \} \subset \Sigma $.
With these adaptations, it is now easy to
finish the proof of Theorem~\ref{t1} in the non-unimodular case $X = \R^2\rtimes_A \R$.

Since in this Case B, $\cF$ is invariant under left translation by arbitrary elements of $X$,
one can prove the following (stronger) analogue of Theorem~\ref{corcon}.

\begin{theorem}\label{corcon3}
Given a sequence $\{ \Omega _n\} _n$ of isoperimetric domains in $X$ with volumes tending to infinity,
there exist open sets $S_n\subset \partial \Omega _n$ with
\begin{equation}
\label{eq:areaAn2}
\frac{\mbox{\rm Area}(S_n)}{\mbox{\rm Area}(\partial \Omega _n)}\to 1\quad \mbox{ as $n\to \infty $,}
\end{equation}
such that for any sequence of points $q_n\in S_n$, the surfaces $\{ q_n^{-1}\partial \Omega _n\} _n$
converge smoothly (in the uniform topology on compact sets of $X$) to  $\Sigma=\R^2\rtimes_A \{0\}$.

Furthermore: \ben \item For this sequence,  the domains $q_n^{-1}\Omega_n$ converge to $\R^2\rtimes_A [0,\infty)$.
\item The radii of the $\Omega_n $ tend to infinity.
\item The mean curvatures of $\partial \Omega_n$ converge to $H(X)$.
\een
\end{theorem}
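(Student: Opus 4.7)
The plan is to carry out a direct adaptation of the argument for Theorem~\ref{corcon}, exploiting the fact that in Case~B the foliation $\cF=\{\R^2\rtimes_A\{z\}\mid z\in\R\}$ is invariant under \emph{every} left translation of $X$. Indeed, by the semidirect product law (\ref{eq:operation}), left translation by $(x_0,y_0,z_0)\in X$ sends $\R^2\rtimes_A\{z\}$ to $\R^2\rtimes_A\{z+z_0\}$; hence $q\cF=\cF$ for all $q\in X$. This is precisely what upgrades the $\esf^1$-family of limits appearing in Theorem~\ref{corcon} to the single limit $\Sigma=\R^2\rtimes_A\{0\}$, and eliminates the need to pass to a subsequence to obtain convergence of the translated foliations.

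I would then import the Divergence-Theorem setup from the proof of Theorem~\ref{corcon}. Since $N_\cF=E_3$ has divergence $-2H(X)$, applied on an isoperimetric domain $\Omega\subset X$ of large volume, together with the already-established item~(2) of Theorem~\ref{t1} it yields
\[
\frac{\int_{\partial\Omega}\langle N_\cF,N_{\partial\Omega}\rangle}{\mbox{Area}(\partial\Omega)}\to 1
\quad\text{as }\mathrm{Vol}(\Omega)\to\infty.
\]
Defining $I_\ve\subset\partial\Omega$ as the closed subset where $\langle N_\cF,N_{\partial\Omega}\rangle\le 1-\ve^2$, the estimate $\mathrm{Area}(I_\ve)/\mathrm{Area}(\partial\Omega)\le\ve^2$ follows verbatim as in (\ref{equiep}). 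Then, reusing the uniform curvature and injectivity radius estimates for $\partial\Omega_n$ given by (the Case~B analogue of) Lemma~\ref{corcon2}, the maximal packing $\Delta_n$ of geodesic $\tau$-disks in $\partial\Omega_n$ and the bad/good disk dichotomy produce open sets $S_n\subset\partial\Omega_n$ satisfying (\ref{eq:areaAn2}), again by property~{\bf (P)}.

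Now fix any sequence $q_n\in S_n$. Because $q_n^{-1}\cF=\cF$ for every $n$, no subsequence is needed in the foliation step: the translated surfaces $q_n^{-1}\partial\Omega_n$ all pass through $e$, have uniformly bounded second fundamental forms and, by the regular neighborhood theorem of Meeks--Tinaglia~\cite{mt3}, admit a uniform one-sided regular neighborhood inside $q_n^{-1}\Omega_n$. Standard elliptic compactness then extracts a subsequential smooth limit consisting of a strongly Alexandrov embedded domain $\cD\subset X$ whose boundary contains a component $P$ through $e$ of constant mean curvature $H(X)$. Since $q_n$ lies in a $\ve_n$-good disk $D_n\in\Delta_n-\Delta_B(n)$ with $\ve_n\to 0$, the unit normals to $q_n^{-1}\partial\Omega_n$ on the disk $q_n^{-1}D_n$ converge to $N_\cF=E_3$, so $P$ is tangent to the leaf $\Sigma=\R^2\rtimes_A\{0\}$ at every point of a geodesic disk, and by analytic continuation $P=\Sigma$.

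The final step is the continuity/deformation argument already used in Theorem~\ref{corcon}: the fixed-size regular neighborhood of $\Sigma\subset\partial\cD$ together with the continuity of $z\mapsto \mathrm{dist}_X(\Sigma,\R^2\rtimes_A\{z\})$ allows one to slide the leaf $\R^2\rtimes_A\{z\}$ upward through $\cD$ for all $z>0$, forcing $\partial\cD=\Sigma$ and identifying $\cD$ with the mean convex half-space, which is $\R^2\rtimes_A[0,\infty)$. Since every subsequential limit must equal this same $\cD$, the full sequence $q_n^{-1}\Omega_n$ converges to $\R^2\rtimes_A[0,\infty)$ and $q_n^{-1}\partial\Omega_n$ to $\Sigma$. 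Items~(2) and (3) in the statement follow immediately, since $\cD$ contains balls of arbitrary radius and $\partial\cD$ has mean curvature $H(X)$. The only genuine subtlety to check is that the half-space $\R^2\rtimes_A[0,\infty)$ is indeed the mean convex side (which follows from $\nabla_{E_i}E_i=\lambda_i E_3$ in (\ref{eq:12}) with $\lambda_1+\lambda_2=\mathrm{trace}(A)>0$, so that the inward normal to the isoperimetric domain is asymptotic to $E_3$ rather than $-E_3$); beyond this orientation bookkeeping there is no substantial new obstacle compared with Case~A.
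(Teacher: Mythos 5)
Your proposal is correct and takes essentially the same route the paper intends. The paper does not spell out a proof of Theorem~\ref{corcon3}; it simply remarks that the Case~A argument (Theorem~\ref{corcon}) carries over, with the strengthening coming from the fact that the horizontal foliation $\cF=\{\R^2\rtimes_A\{z\}\mid z\in\R\}$ is invariant under \emph{every} left translation of $X$ (which you verify directly from the group law (\ref{eq:operation})). Your writeup fills in exactly the details the paper leaves implicit: the Divergence-Theorem/good-disk machinery producing $S_n$, the elliptic compactness and regular-neighborhood extraction of the limit domain $\cD$, the identification $P=\Sigma$ via analytic continuation from a good disk, the sliding argument forcing $\partial\cD=\Sigma$, and the uniqueness-of-subsequential-limits step upgrading subsequential convergence to full-sequence convergence (which is the content behind the paper's word "stronger"). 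The orientation check via $\nabla_{E_i}E_i=\lambda_iE_3$, $\lambda_1+\lambda_2=\mathrm{trace}(A)>0$, is also correct and is a reasonable thing to flag.
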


This concludes the proof of Theorem~\ref{t1}. \vspace{.2cm}
\begin{corollary}
\label{corol1.5}
Let $X$ be a non-compact, simply connected homogeneous three-manifold. Then:
\ben[(1)]
\item The isoperimetric profile $I$ is non decreasing, and $\mathrm{Ch}(X)=\lim_{t \to \infty}I'_+(t)=\lim_{t \to \infty}I'_-(t)$, where $I'_+,I'_-$ denote the right and left derivatives of $I$.
\item If $X$ is diffeomorphic to $\R^3$, then $I$ is strictly increasing and $\mathrm{Ch}(X)<I'_+(t)$, for all $t>0$.
\een
\end{corollary}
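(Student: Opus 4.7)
The plan is to derive Corollary \ref{corol1.5} essentially formally from Theorem \ref{t1} together with Lemma \ref{lem2.1}, which already encode all the substantive information required. The strategy will be to translate between $I'_\pm(t)$ and mean curvatures of isoperimetric boundaries via Lemma \ref{lem2.1}, and then to apply the mean curvature convergence and the strict inequality from Theorem \ref{t1} to pass to the conclusions.

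For item (1), I would first recall that monotonicity is already built in: Lemma \ref{lem2.1} asserts that $I$ is non-decreasing on $(0,\infty)$ for every non-compact, simply connected homogeneous three-manifold $X$. For the limit statements, fix an arbitrary sequence $t_n \to \infty$. By Lemma \ref{lem2.1}, for each $n$ there exist isoperimetric domains $\Omega_n^+, \Omega_n^-$ of volume $t_n$ whose boundaries have constant mean curvatures $H_n^+ = \tfrac12 I'_+(t_n)$ and $H_n^- = \tfrac12 I'_-(t_n)$ respectively. Item (3) of Theorem \ref{t1} applied to each of these sequences gives $H_n^\pm \to H(X)$, and item (2) of the same theorem identifies $2H(X) = \mathrm{Ch}(X)$. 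Hence
\[
\lim_{n\to\infty} I'_+(t_n) \;=\; \lim_{n\to\infty} I'_-(t_n) \;=\; 2H(X) \;=\; \mathrm{Ch}(X),
\]
and since $t_n \to \infty$ was arbitrary, both one-sided limits at infinity equal $\mathrm{Ch}(X)$.

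For item (2), the strict monotonicity of $I$ when $X \approx \R^3$ is again the last statement of Lemma \ref{lem2.1}. To obtain the strict inequality $\mathrm{Ch}(X) < I'_+(t)$ for each fixed $t > 0$, I would argue as follows: since $X$ is diffeomorphic to $\R^3$, in particular $X$ is not isometric to $\esf^2(\kappa)\times \R$, so item (1) of Theorem \ref{t1} applies. By Lemma \ref{lem2.1}, the infimum defining $\tfrac12 I'_+(t)$ is attained by some isoperimetric domain $\Omega$ of volume $t$ whose boundary has constant mean curvature $H = \tfrac12 I'_+(t)$. Applying Theorem \ref{t1}(1) to this specific $\Omega$ yields $\mathrm{Ch}(X) < 2H = I'_+(t)$, as required.

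The only step requiring care is precisely the use of attainment in Lemma \ref{lem2.1}: if $\tfrac12 I'_+(t)$ were only an infimum (not realized), Theorem \ref{t1}(1) would yield merely $\mathrm{Ch}(X) \leq I'_+(t)$ rather than the strict inequality. Since Lemma \ref{lem2.1} guarantees the infimum is actually achieved by some isoperimetric domain, the strict inequality passes through cleanly, and no further compactness or limiting argument is needed.
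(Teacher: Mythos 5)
Your proposal is correct and follows essentially the same route as the paper: monotonicity from Lemma~\ref{lem2.1}, the identification of $\tfrac12 I'_\pm(t)$ with achieved mean curvatures of isoperimetric domains from the same lemma, and then Theorem~\ref{t1}(2)-(3) for the limits and Theorem~\ref{t1}(1) for the strict inequality. The paper's proof is just a compressed version of yours, and your remark about the importance of attainment (rather than a mere infimum) in Lemma~\ref{lem2.1} for getting the strict inequality is exactly the right point to flag.
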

\begin{proof} Let $X$ be a non-compact, simply connected homogeneous
three-manifold. The isoperimetric profile $I$ of $X$ is non-decreasing (and strictly increasing if
$X$ is diffeomorphic to $\R^3$) by Lemma~\ref{lem2.1}. As $I'_+(t), I'_-(t)$ are the mean curvatures
of isoperimetric domains for every $t>0$ by Lemma~\ref{lem2.1}, then the remaining statements in
Corollary~\ref{corol1.5} follow from Theorem~\ref{t1}.
\end{proof}
\par
\noindent{\it Proof of Theorem~\ref{thm1.6}.} Let $X$ be a homogeneous three-manifold diffeomorphic to $\R^3$.
If $X$ is isometric to $\sl $ with a left invariant metric, then Proposition~\ref{claim:3d} implies the
desired properties. Otherwise, $X$ is isometric to a semidirect product $\R^2\rtimes _A\R $, and
then $\cF=\{ \R^2\rtimes _A\{ z\} \ | \
z\in \R \} $ satisfies all the properties in the statement of the theorem (see the explanation before
Theorem~\ref{corcon3}). Now the proof is complete.
{\hfill\penalty10000\raisebox{-.09em}{$\Box$}\par\medskip}
%

\section{Appendix: Constant mean curvature hypersurfaces obtained by deforming the ambient metric}
\label{Sec:appendix}

Let $\Sigma \subset W$ be a compact, two-sided, smooth embedded hypersurface in an $n$-dimensional ambient manifold.
Suppose that for a given Riemannian metric $g_0$ on $W$,  the following properties hold:
 \begin{itemize}
\item The mean curvature function of $(\Sigma ,g_0)$ (with the
induced metric) is a constant $H_0\in \R $. In particular, we have chosen an
orientation on $\Sigma $ when $H\neq 0$.
 \item The Jacobi operator $L\colon C^{\infty }(\Sigma )\to C^{\infty }(\Sigma )$ of
 $(\Sigma ,g _0)$ has one-dimensional kernel, generated by a Jacobi function $\varphi \in C^{\infty }(\Sigma )$
with $\int _{\Sigma }\varphi \, dA_{g_0}\neq 0$.
 \end{itemize}

Let $\mathcal{G}$ be a neighborhood of $g_0$ in some collection of metrics, so that $\mathcal{G}$ can be
considered to be an open set of a Banach manifold.

\begin{remark}
{\rm
If $W=\sl /\Delta $ as in Lemma~\ref{lem5.3}
where $W$ is equipped with the quotient metric $g_0=g_W$, then
$\mathcal{G}$ could be taken to be a small neighborhood of $g_W$ in the space
of locally homogeneous metrics on $W$ that descend
from left invariant metrics on $\sl$, which according to Proposition~\ref{metrics}, are parameterized by the open set
$\cM=\{(\l_1,\l_2,\l_3)\in \rth \mid \l_i>0\}
$. An application of the deformation results of this section  appears in the proof of Lemma~\ref{lem5.15} above,
where the compact hypersurface $\Sigma$ described in the previous paragraph
 is the torus $\Sigma/\Delta$ in $(W,g_W)$ given in  item~(D) of Proposition~\ref{lem5.3}
and $\mathcal{G}$ is considered to be a small ball in the related set of
quotient metrics of $W$ centered at $g_W$.
}
\end{remark}

Fix $\a >0$. In the sequel, we will consider small open neighborhoods of $g_0$ in $\mathcal{G}$ and of
the function zero in $C^{2,\a }(\Sigma )$. We will use the notation $\mathcal{G}_{\ve }$, $C^{2,\a }
(\Sigma )_{\ve }$ for these neighborhoods, which will be often changed by smaller ones while keeping
the subindex $\ve $.

As $\Sigma $ is compact, there exists $\ve >0$ small enough so that given $g \in \mathcal{G}_{\ve }$ and
$u\in C^{2,\a }(\Sigma )_{\ve }$, the $g$-normal graph of $u$ over $\Sigma $ defines an embedded $ C^{2,\a }$
hypersurface $\Sigma _{g,u}\subset W$ which is diffeomorphic to $\Sigma $. This means that the map
\[
\phi _{g,u}\colon \Sigma \to \Sigma _{g,u},\quad \phi _{g,u}(p)=\exp _p^{g}\left( u(p)N_{\Sigma }^g(p)\right)
\]
is a diffeomorphism, where $\exp ^g$ is the exponential map on $(W,g)$ and $N_{\Sigma }^g$ is the unit normal vector
field to $\Sigma \subset (W,g)$ for which the orientation on $\Sigma _{g,u}$ coincides after pullback
through $\phi _{g,u}$ with the original orientation on $\Sigma $.
We will denote by $H(g,u)$ the mean curvature of $\Sigma _{g,u}$ with respect to $N_{\Sigma }^g$.

Consider the real analytic map
\[
\widehat{H}\colon \mathcal{G}_{\ve }\times \R \times C^{2,\a }(\Sigma )_{\ve }\to C^{\a }(\Sigma ),
\quad \wh{H}(g,c,u)=c-H(g,u).
\]
Thus, $\wh{H}(g_0,H_0,0)=0$. Our goal is to apply to $\wh{H}$ the Implicit Function Theorem around
$(g_0,H_0,0)$. Note that the zeros of $\wh{H}$ can be identified with the set of hypersurfaces $\Sigma '\subset W$
sufficiently $C^{2,\a }$-close to $\Sigma $, that have constant mean curvature $c$
in nearby ambient spaces $(W,g)$ to $(W,g_0)$.

\begin{lemma}
\label{lema0.1}
In the above situation, the differential
\[
(D\wh{H})_{(g_0,H_0,0)}\colon T_{g_0}\mathcal{G}\times \R \times C^{2,\a }(\Sigma )\to C^{\a }(\Sigma )
\]
is surjective.
\end{lemma}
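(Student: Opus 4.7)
The plan is to establish surjectivity using only partial derivatives in the $c$ and $u$ directions, leaving the metric variation $h \in T_{g_0}\mathcal{G}$ unused. The strategy rests on Fredholm theory for the Jacobi operator combined with the nondegeneracy hypothesis $\int_\Sigma \varphi\, dA_{g_0}\neq 0$. Concretely, I will show that the partial differential $(D\widehat{H})_{(g_0,H_0,0)}$ already surjects onto $C^{\alpha}(\Sigma)$ when restricted to variations of the form $(0,a,v)\in \R\times C^{2,\alpha}(\Sigma)$.

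First I compute the relevant partial derivatives. From $\widehat{H}(g,c,u)=c-H(g,u)$, the $c$-partial at $(g_0,H_0,0)$ sends $a\in\R$ to the constant function $a\in C^{\alpha}(\Sigma)$. For the $u$-partial, I use the standard first variation formula for the mean curvature under a normal graph: up to sign (and possibly a harmless positive constant depending on the convention chosen for $H$), the linearization of $u\mapsto H(g_0,u)$ at $u=0$ is the Jacobi operator
\[
L = \Delta_\Sigma + |A_\Sigma|^2 + \mathrm{Ric}_{g_0}(N_\Sigma,N_\Sigma),
\]
so that the $u$-partial of $\widehat{H}$ at $(g_0,H_0,0)$ is the map $\mathcal{L}\colon C^{2,\alpha}(\Sigma)\to C^{\alpha}(\Sigma)$, $\mathcal{L}v = \pm Lv$. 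In particular, $\mathcal{L}$ is a second-order linear elliptic self-adjoint operator on the compact manifold $\Sigma$, with the same kernel and cokernel as $L$; by hypothesis $\ker(\mathcal{L})=\R\cdot\varphi$.

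Now I invoke the Fredholm alternative for self-adjoint elliptic operators on compact manifolds (in H\"older spaces): the image of $\mathcal{L}$ is a closed subspace of $C^{\alpha}(\Sigma)$ of codimension equal to $\dim\ker(\mathcal{L})=1$, explicitly given by
\[
\mathrm{Image}(\mathcal{L}) = \Bigl\{\, f\in C^{\alpha}(\Sigma) : \int_\Sigma f\,\varphi\, dA_{g_0} = 0 \,\Bigr\}.
\]
The hypothesis $\int_\Sigma \varphi\, dA_{g_0}\neq 0$ says exactly that the constant function $1\in C^{\alpha}(\Sigma)$ fails to lie in $\mathrm{Image}(\mathcal{L})$. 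Since the cokernel is one-dimensional and $1$ represents a nonzero class in it, I conclude
\[
C^{\alpha}(\Sigma) = \mathrm{Image}(\mathcal{L}) \oplus \R\cdot 1.
\]
Hence any $f\in C^{\alpha}(\Sigma)$ can be written as $f=\mathcal{L}v + a\cdot 1$ for suitable $v\in C^{2,\alpha}(\Sigma)$ and $a\in \R$, which is precisely $(D\widehat{H})_{(g_0,H_0,0)}(0,a,v)$. This proves surjectivity. The only point requiring care — and the mildest possible obstacle — is bookkeeping the sign of the linearization of $H$ along normal graphs so as to identify $\mathcal{L}$ with $L$ (up to sign); this is routine and does not affect the structural argument, since Fredholm indices, kernels, and cokernels are insensitive to an overall sign.
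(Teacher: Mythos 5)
Your proof is correct and takes essentially the same approach as the paper: both arguments set $\dot g=0$, invoke the Fredholm alternative for the self-adjoint Jacobi operator $L$ on the compact surface $\Sigma$, and use the hypothesis $\int_\Sigma \varphi\, dA_{g_0}\neq 0$ to produce, for each target $w\in C^\alpha(\Sigma)$, a pair $(a,v)\in\R\times C^{2,\alpha}(\Sigma)$ with $a-Lv=w$. The paper writes this slightly more constructively (it defines $a$ by the explicit projection formula and then solves $Lv=a-w$), whereas you phrase it as the decomposition $C^\alpha(\Sigma)=\mathrm{Image}(L)\oplus\R\cdot 1$; these are the same argument.
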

\begin{proof}
We will use the standard notation $\frac{\partial \wh{H}}{\partial g}=D_1\wh{H}$,
$\frac{\partial \wh{H}}{\partial c}=D_2\wh{H}$, $\frac{\partial \wh{H}}{\partial u}=D_3\wh{H}$
for partial derivatives. Given $(\dot{g},a,v)\in T_{g_0}\mathcal{G}\times \R \times C^{2,\a }(\Sigma )$,
we have
\begin{equation}
\label{eq:0.1}
{\textstyle
(D\wh{H})_{(g_0,H_0,0)}(\dot{g},a,v)=\left( \frac{\partial \wh{H}}{\partial g}\right) _{(g_0,H_0,0)}(\dot{g})+a-
Lv.
}
\end{equation}
Given $w\in C^{\a }(\Sigma )$, define $a\in \R $ by the formula
\[
a=\frac{\int _{\Sigma }w\varphi \, dA_{g_0}}{\int _{\Sigma }\varphi \, dA_{g_0}}.
\]
Thus, $a-w$ is orthogonal to $\varphi $ in $L^2(\Sigma ,g_0)$. Since the Jacobi operator $L\colon
C^{2,\a }(\Sigma )\to C^{\a }(\Sigma )$ is self-adjoint with respect to the Hilbert space
$L^2(\Sigma ,g_0)$ and $\varphi $ generates the kernel of $L$, then we conclude that there exists
$v\in C^{2,\a }(\Sigma )$ such that $Lv=a-w$. Finally, (\ref{eq:0.1}) gives
\[
(D\wh{H})_{(g_0,H_0,0)}(0,a,v)=a-Lv=w,
\]
which proves the lemma.
\end{proof}

By Lemma~\ref{lema0.1} and the Implicit Function Theorem, there exists $\ve >0$ small enough so that
the set
\[
\mathcal{M}=\wh{H}^{-1}(0)=\{ (g,c,u)\in \mathcal{G}_{\ve }\times (H_0-\ve ,H_0+\ve )\times C^{2,\a }
(\Sigma )_{\ve }\ | \ H(g,u)=c\}
\]
is a real analytic manifold passing through $(g_0,H_0,0)$. Furthermore, the tangent space to
$\mathcal{M}$ at $(g_0,H_0,0)$ is
\[
\begin{array}{rcl}
T_{(g_0,H_0,0)}\mathcal{M}&=&\mbox{kernel}(D\wh{H})_{(g_0,H_0,0)}
\\
&\stackrel{(\ref{eq:0.1})}{=}&
\left\{ (\dot{g},a,v)\in T_{g_0}\mathcal{G}\times \R \times C^{2,\a }(\Sigma )
\ | \ \left( \frac{\partial \wh{H}}{\partial g}\right) _{(g_0,H_0,0)}(\dot{g})+a=
Lv\right\} .
\end{array}
\]
 Consider the natural projection
\[
\Pi \colon \mathcal{G}\times \R \times C^{2,\a }(\Sigma )\to \mathcal{G},\qquad
\Pi (g,c,u)=g.
\]

In the next proposition we prove that every metric $g\in \mathcal{G}$ sufficiently close to $g_0$
admits a real analytic curve of hypersurfaces $t\in (-\de ,\de )\mapsto \Sigma _{g,u(g,t)}$
with constant mean curvature $c(g,t)$, which
form a deformation of the original hypersurface $\Sigma _0$.

\begin{remark}
{\rm
In our special setting $W=\sl /\Delta $,
this curve of deformed hypersurfaces with constant mean curvature $c(g,t)$
turns out to be a family of leaves of a $c(g)$-foliation of $(W,g)$ by the statement of
Lemma~\ref{lem:H}. In particular in this case,
$c(g,t)=c(g)$ does not depend on $t$.
This fact that $c(g,t)$ depends solely on $g$ and not on $t$ in this particular application of
Proposition~\ref{prop:open} below reflects the fact that $(\sl ,g)$ is a homogeneous
space, in contrast with the framework of this appendix where no homogeneity is assumed.
}
\end{remark}

\begin{proposition}[Openness] \label{prop:open}
 In the above situation, the differential
\[
[D(\Pi |_{\mathcal{M}})]_{(g_0,H_0,0)}\colon T_{(g_0,H_0,0)}\mathcal{M}\to T_{g_0}\mathcal{G}
\]
is surjective and its kernel is $\{ 0\} \times \{ 0\} \times \mbox{\rm Span}(\varphi )$. In particular:
\begin{enumerate}
 \item $\dim \mathcal{M}=\dim \mathcal{G}+1$.
\item  There exist $\ve ,\de >0$ and a real analytic map
\[
(g,t)\in \mathcal{G}_{\ve }\times (-\de ,\de )  \mapsto
(c(g,t),u(g,t))\in (H_0-\ve ,H_0+\ve )\times C^{2,\a }(\Sigma )_{\ve }
\]
with $(c(g_0,0),u(g_0,0))=(H_0,0)$, such that $\{ (g,c(g,t),u(g,t)) \ | \ g\in \mathcal{G}_{\ve }, |t|<\de \} $
is an open neighborhood of $(g_0,H_0,0)$ in $\mathcal{M}$. In particular, for each
$g\in \mathcal{G}_{\ve}$ fixed, $t\in (-\de ,\de )\mapsto \Sigma_{g,u(g,t)}$ is a
1-parameter family of compact hypersurfaces
of constant mean curvature $c(g,t)$ in $(W,g)$.
\end{enumerate}
\end{proposition}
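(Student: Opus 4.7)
The differential $[D(\Pi|_{\mathcal{M}})]_{(g_0,H_0,0)}$ is just the restriction to $T_{(g_0,H_0,0)}\mathcal{M}$ of the projection $(\dot g,a,v)\mapsto\dot g$, so the proposition reduces to a linear algebra computation on the tangent space formula displayed just before the statement, together with an inverse function theorem argument in Banach manifolds. My plan is to (i) identify the kernel and prove surjectivity of $[D(\Pi|_\mathcal{M})]_{(g_0,H_0,0)}$ by applying the Fredholm alternative to the self-adjoint elliptic operator $L$, essentially recycling the bookkeeping of Lemma~\ref{lema0.1}; (ii) deduce the dimension formula in item~(1); and (iii) produce the real analytic parametrization in item~(2) by complementing $\Pi$ with a single real-valued functional $u\mapsto\int_\Sigma u\varphi\,dA_{g_0}$ and invoking the inverse function theorem.

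\textbf{Surjectivity and kernel.} Given $\dot g\in T_{g_0}\mathcal{G}$, set $w=(\partial\hat H/\partial g)_{(g_0,H_0,0)}(\dot g)\in C^{\alpha}(\Sigma)$ and choose $a\in\R$ with $a=-\int_\Sigma w\varphi\,dA_{g_0}/\int_\Sigma\varphi\,dA_{g_0}$, which makes $w+a$ belong to $(\ker L)^{\perp}$ in $L^2(\Sigma,g_0)$. Fredholm theory for the self-adjoint elliptic operator $L$ then produces $v\in C^{2,\alpha}(\Sigma)$ with $Lv=w+a$, so $(\dot g,a,v)\in T_{(g_0,H_0,0)}\mathcal{M}$ and it projects onto $\dot g$. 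For the kernel, any element of the form $(0,a,v)\in T_{(g_0,H_0,0)}\mathcal{M}$ satisfies $Lv=a$; pairing with $\varphi$ and using self-adjointness gives $a\int_\Sigma\varphi\,dA_{g_0}=0$, hence $a=0$, and then $Lv=0$ forces $v\in\mathrm{Span}(\varphi)$. This identifies the kernel as $\{0\}\times\{0\}\times\mathrm{Span}(\varphi)$, a one-dimensional subspace, which combined with surjectivity yields $\dim\mathcal{M}=\dim\mathcal{G}+1$ as in item~(1).

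\textbf{The parametrization.} For item~(2), I would introduce the auxiliary real analytic map
\[
\Phi\colon \mathcal{M}\longrightarrow\mathcal{G}\times\R,\qquad \Phi(g,c,u)=\Big(g,\,\int_\Sigma u\,\varphi\,dA_{g_0}\Big),
\]
which sends $(g_0,H_0,0)$ to $(g_0,0)$. Its differential at that point is the Banach space map $(\dot g,a,v)\mapsto(\dot g,\int_\Sigma v\,\varphi\,dA_{g_0})$. Surjectivity follows from the previous paragraph together with the freedom to add any multiple of $\varphi$ to $v$, which shifts the $\R$-component by a nonzero multiple of $\int_\Sigma\varphi^2\,dA_{g_0}>0$; injectivity follows because an element of the kernel must have $\dot g=0$, hence $a=0$ and $v=s\varphi$ for some $s\in\R$, and then $\int_\Sigma s\varphi^2\,dA_{g_0}=0$ forces $s=0$. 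The inverse function theorem on Banach manifolds then makes $\Phi$ a real analytic local diffeomorphism near $(g_0,H_0,0)$, and its inverse has the form $(g,t)\mapsto(g,c(g,t),u(g,t))$ with $c(g_0,0)=H_0$ and $u(g_0,0)=0$; by construction the image is an open neighborhood of $(g_0,H_0,0)$ in $\mathcal{M}$, and for each fixed $g$ the curve $t\mapsto \Sigma_{g,u(g,t)}$ is a family of compact hypersurfaces of constant mean curvature $c(g,t)$ in $(W,g)$, as required.

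\textbf{Main obstacle.} There is no substantial analytic difficulty: $L$ is a self-adjoint elliptic operator on a compact hypersurface so Fredholm theory and Schauder regularity are standard, and the real analyticity of $\hat H$ in its three arguments was already invoked to realize $\mathcal{M}$ as a real analytic Banach submanifold via the implicit function theorem. The one hypothesis that the whole argument truly hinges on is the normalization $\int_\Sigma\varphi\,dA_{g_0}\neq 0$: it is exactly what allows the choice of $a$ making $w+a$ orthogonal to $\ker L$, what forces $a=0$ in the kernel computation, and what guarantees that the complementary functional appearing in $\Phi$ is linearly independent from the $\mathcal{G}$-coordinate.
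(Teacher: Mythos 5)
Your proof is correct and follows essentially the same route as the paper: the surjectivity and kernel computation recycle the Fredholm/orthogonality bookkeeping of Lemma~\ref{lema0.1}, and item~(2) is obtained from the inverse function theorem for the submersion $\Pi|_{\mathcal M}$. You merely spell out explicitly (via the auxiliary chart $\Phi=(g,\int_\Sigma u\varphi\,dA_{g_0})$) what the paper leaves implicit in the phrase ``direct consequence of the Implicit Function Theorem applied to $\Pi|_{\cM}$.''
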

\begin{proof} We first prove that $[D(\Pi |_{\mathcal{M}})]_{(g_0,H_0,0)}$ is surjective.
To see this, take an element $\dot{g}\in T_{g_0}\mathcal{G}$. Define
\begin{equation}
\label{eq:0.2}
\wh{w}=\wh{w}(\dot{g})=\left( \frac{\partial \wh{H}}{\partial g}\right) _{(g_0,H_0,0)}(\dot{g})\in
C^{\a }(\Sigma ),
\end{equation}
and
\begin{equation}
\label{eq:0.3}
a=a(\dot{g})=-\frac{\int _{\Sigma }\wh{w}\varphi \, dA_{g_0}}{\int _{\Sigma }\varphi \, dA_{g_0}}\in \R.
\end{equation}
Then, (\ref{eq:0.3}) gives that $\wh{w}+a$ is orthogonal to $\varphi $ in $L^2(\Sigma ,g_0)$.
Reasoning as in the proof of Lemma~\ref{lema0.1}, we deduce that there exists
$v\in C^{2,\a }(\Sigma )$ such that $Lv=\wh{w}+a$. Finally,
\[
\left( \frac{\partial \wh{H}}{\partial g}\right) _{(g_0,H_0,0)}(\dot{g})+a=\wh{w}+a=Lv,
\]
thus $(\dot{g},a,v)\in T_{(g_0,H_0,0)}\mathcal{M}$. Since clearly
$(D\Pi )_{(g_0,H_0,0)}(\dot{g},a,v)=\dot{g}$, then we deduce that $[D(\Pi |_{\mathcal{M}})]_{(g_0,H_0,0)}$
is surjective, as desired.

Now suppose that $(\dot{g},a,v)\in \mbox{kernel}[D(\Pi |_{\mathcal{M}})]_{(g_0,H_0,0)}$. Thus,
$\dot{g}=0$ and $a=Lv$. In particular, $a$ lies in the image of $L$. As $L$ is
self-adjoint with respect to $g_0$, then $a$ is $L^2$-orthogonal to the kernel of $L$, which
is spanned by $\varphi $. Thus, $0=\int _{\Sigma }a\, \varphi \, dA_{g_0}=a\int _{\Sigma }
\varphi \, dA_{g_0}$.
As $\int _{\Sigma }\varphi \, dA_{g_0}\neq 0$,
then $a=0$. Therefore, $\mbox{kernel}[D(\Pi |_{\mathcal{M}})]_{(g_0,H_0,0)}\subset
\{ 0\} \times \{ 0\} \times \mbox{Span}\{ \varphi \} $. The reverse containment is a direct
consequence of the above description of $T_{(g_0,H_0,0)}\mathcal{M}$, which finishes the proof of
the first sentence in the statement of the proposition. Item (1) is now obvious, and (2) is a
direct consequence of the Implicit Function Theorem applied to $\Pi |_{\cM}$.
\end{proof}

\bigskip
\footnotesize
\noindent\textit{Acknowledgments.}
The first author was supported in part by NSF Grant DMS -
   1004003. Any opinions, findings, and conclusions or recommendations
   expressed in this publication are those of the authors and do not
   necessarily reflect the views of the NSF. The second author was partially supported by Direccion General de Investigacion, grant no.
MTM2010-19821 and by Fundacion Seneca, Agencia de Ciencia y Tecnologia de la Region de Murcia, grant no. 0450/GERM/06. The third and fourth authors were supported in part
by MEC/FEDER grants no. MTM2007-61775 and MTM2011-22547, and
Regional J. Andaluc\'\i a grant no. P06-FQM-01642.

\bibliographystyle{plain}
\bibliography{bill}
\end{document}